\newtheorem{thm}{Theorem}
\newtheorem{lem}[thm]{Lemma}
\newtheorem{prop}[thm]{Proposition}
\theoremstyle{definition}
\newtheorem{defn}[thm]{Definition}
\newtheorem{assump}[thm]{Assumption}
\theoremstyle{remark}
\newtheorem{rem}[thm]{Remark}
\newtheorem{nrem}[thm]{Notational Remark}
\newtheorem{case}{Case}
\newcommand{\R} {\mathbb{R}}
\newcommand{\N} {\mathbb{N}}
\newcommand{\E} {\mathbb{E}}
\newcommand{\p} {\mathbb{P}}
\DeclareMathOperator{\Tr}{Tr}
\DeclareMathOperator{\im}{\mathrm{Im}}
\newcommand{\caN}{{\mathcal N}}
\newcommand{\caO}{{\mathcal O}}
\newcommand{\bse}{{\boldsymbol e}}
\newcommand{\bsu}{{\boldsymbol u}}
\newcommand{\bsv}{{\boldsymbol v}}
\newcommand{\bsw}{{\boldsymbol w}}
\newcommand{\bsx}{{\boldsymbol x}}
\newcommand{\bsy}{{\boldsymbol y}}
\newcommand{\bsz}{{\boldsymbol z}}
\newcommand{\evlim}{\sqrt{\lambda} \E[f'_{12}] + \frac{1}{\sqrt{\lambda} \E[f'_{12}]}}
\newcommand{\evA}{\sqrt{\lambda} \E[f'_{12}] }
\newcommand{\wt}{\widetilde}
\newcommand{\beq}{ \begin{equation} }
\newcommand{\eeq}{ \end{equation} }
\newcommand{\dd}{\mathrm{d}}
\newcommand{\ii}{\mathrm{i}}
\renewcommand{\P}{\mathbb{P}}
\newcommand{\M}{\text{M}}
\def\blfootnote{\xdef\@thefnmark{}\@footnotetext}
\numberwithin{equation}{section}
\numberwithin{thm}{section}
\title{Fluctuations of the largest eigenvalues of transformed spiked Wigner matrices}
\author{Aro Lee\footnote{Department of Mathematical Sciences, KAIST, Daejeon, 34141, Korea, email: \texttt{sditar444@kaist.ac.kr}} \: and Ji Oon Lee\footnote{Department of Mathematical Sciences, KAIST, Daejeon, 34141, Korea, email: \texttt{jioon.lee@kaist.edu}}}
\begin{document}

\maketitle
\begin{abstract}
	We consider a spiked random matrix model obtained by applying a function entrywise to a signal-plus-noise symmetric data matrix. We prove that the largest eigenvalue of this model, which we call a transformed spiked Wigner matrix, exhibits Baik--Ben Arous--P\'ech\'e (BBP) type phase transition. We show that the law of the fluctuation converges to the Gaussian distribution when the effective signal-to-noise ratio (SNR) is above the critical number, and to the GOE Tracy--Widom distribution when the effective SNR is below the critical number. We provide precise formulas for the limiting distributions and also concentration estimates for the largest eigenvalues, both in the supercritical and the subcritical regimes.
\end{abstract}

\section{Introduction} \label{sec:intro}
First introduced in the early 1900s, the principal component analysis (PCA) has been widely used as a fundamental method for analyzing multivariate data across a wide range of scientific fields including genetics, economics, and various other disciplines. One notable application of PCA is detecting and recovering a signal from matrix-type data that includes inevitable noise, commonly referred to as signal-plus-noise data. 

When the signal is a low-rank matrix, the signal-plus-noise data can be modeled by spiked random matrix models, which play a crucial role in analyzing for many machine learning problems. For instance, in the study of the feature learning in a two-layer neural network, a rank-$1$ perturbation of the initial weight matrix can approximate the updated weight after one gradient step \cite{ba2022high}. This idea is further developed by \cite{cuiasymptotics,dandirandom}, where the spiked Random Features model was considered with first layer weights. Additionally, these models have practical applications in machine learning algorithms, such as an ensemble method for Q-learning in reinforcement learning 
\cite{lee2023spqr}.

One of the most natural models for signal plus-noise data is a {\it spiked Wigner matrix}, which is of the form
\beq \label{eq:spiked_Wigner}
M =W + \sqrt{\lambda}xx^{T},
\eeq
where the noise $W$ is an $N \times N$ symmetric (real) Wigner matrix, $x \in \mathbb{R}^{N}$ is the spike, and $\lambda \in \mathbb{R}$ is the signal-to-noise ratio (SNR). (See Definitions \ref{defn:Wigner} and \ref{defn:spiked_Wigner}.) 
Spiked Wigner matrices appear naturally in inference problems where the data is obtained from pairwise measurements. Notable examples of such problems include the community detection from the stochastic block model and synchronization over $\mathbb{Z}/2$. 

It is well-known that the largest eigenvalue of $M$ exhibits a phase transition, known as the BBP transition named after the seminal work by Baik, Ben Arous, and P\'ech\'e \cite{bbp}, depending on the parameter $\lambda$. If $\lambda > 1$, then the largest eigenvalue of $M$ converges to $\sqrt{\lambda} + \frac{1}{\sqrt{\lambda}}$, separated from the other eigenvalues of $M$. On the other hand, if $\lambda < 1$, the largest eigenvalue converges to $2$, which is the spectral edge of the pure noise model.

In this paper, we consider an entrywise transformation of the spiked Wigner matrix in \eqref{eq:spiked_Wigner}, defined by
\beq \label{eq:transformed}
\wt M_{ij} = N^{-\frac{1}{2}} f(\sqrt{N} M_{ij})
\eeq
for a function $f$. (Note that a typical size of the entries of a Wigner matrix is of order $N^{-1/2}$.) Such an entrywise transformation was considered in \cite{lesieur2015mmse,perry} to improve PCA by effectively increasing the SNR. Heuristically, the entries of the transformed spiked Wigner matrix in \eqref{eq:transformed} can be approximated using the Taylor expansion by
\beq \begin{split} \label{eq:Taylor}
	&N^{-\frac{1}{2}} f(\sqrt{N} M_{ij}) 
	= N^{-\frac{1}{2}} f(\sqrt{N} W_{ij} + \sqrt{\lambda N} x_i x_j) \\
	&\approx \frac{f(\sqrt{N} W_{ij})}{\sqrt{N}} + \sqrt{\lambda} f'(\sqrt{N} W_{ij}) x_i x_j \approx \frac{f(\sqrt{N} W_{ij})}{\sqrt{N}} + \sqrt{\lambda} \E[f'(\sqrt{N} W_{ij})] x_i x_j.
\end{split} \eeq
The transformed matrix $\wt M$ is approximately a spiked Wigner matrix, where the SNR is changed to $\lambda (\E[f'(\sqrt{N} W_{ij})])^2$. By optimizing $f$, it is possible to make the new SNR larger than the original SNR, which makes the PCA with the transformed matrix (called the transformed PCA) stronger than the vanilla PCA. The approximation in \eqref{eq:Taylor} is proven to be valid in \cite{perry}, in the sense that the difference between the largest eigenvalues of $\wt M$ and its approximation defined by the right side of \eqref{eq:Taylor} is $o(1)$. We note that, while Theorem 4.8 in \cite{perry} considers the approximation in \eqref{eq:Taylor} only for the optimal transformation, the same proof works for any transformation.

Suppose now that we want to detect the presence of the signal in the given data matrix by PCA. It would require to compute the largest eigenvalues of the data matrix, estimate the $p$-value from the known limiting distributions of the largest eigenvalues, and compare it with a predefined significance level; for an algorithm involving the largest eigenvalue only, see \cite{kritchman2008determining}, and for an algorithm based on the ratio between the top eigenvalues, see \cite{onatski2009testing}. If the given data matrix is a spiked Wigner matrix, the limiting distributions of the largest eigenvalues are also well-known, which is also a part of the BBP transition; if $\lambda > 1$, then the fluctuation of the largest eigenvalue of $M$ is given by a Gaussian of order $N^{-1/2}$, whereas if $\lambda < 1$, it is given by GOE Tracy--Widom distribution of order $N^{-2/3}$. However, a corresponding result for transformed spiked Wigner matrices are not known.

\subsection{Contributions}

In this paper, we prove the fluctuations of the largest eigenvalue of the transformed spiked Wigner matrix $\wt M$ coincide with those of the spiked Wigner matrix. More precisely, under mild assumptions on $\wt M$ (see Assumptions \ref{assump:entry} and \ref{assump:mapping}) if we let $\mu_1$ be the largest eigenvalue of $\wt M$ and the effective SNR $\lambda_e = \lambda (\E[f'(\sqrt{N}W_{12})])^2$, then

\begin{itemize}
	\item (Supercritical case) If $\lambda_e > 1$, then $N^{1/2} (\mu_1 - (\sqrt{\lambda_e} + \frac{1}{\sqrt{\lambda_e}}))$ converges to a Gaussian distribution with mean $0$ and variance $2(\lambda_e - 1)/\lambda_e$.
	
	\item (Subcritical case) If $\lambda_e < 1$, then $N^{2/3} (\mu_1 - 2)$ converges to the GOE Tracy--Widom distribution.
	
	\item (Rigidity) For both the supercritical and the subcritical cases, the deviation of the largest eigenvalue from its limit cannot be significantly larger than the typical size of the fluctuation with overwhelming probability.
\end{itemize}
See Theorems \ref{thm:main} and \ref{thm:rigidity} for the precise statements. This establishes the BBP transition for the largest eigenvalue of $\wt M$. We also consider some specific examples to compare the numerical results with the theoretical results.

The main technical difficulty in the proof is that the error terms in the approximation in \eqref{eq:Taylor} is not negligible a priori. Most precisely, in \eqref{eq:Taylor}, the term
\beq \label{eq:error_entry}
\sqrt{\lambda} \left( f'(\sqrt{N} W_{ij}) - \E[f'(\sqrt{N} W_{ij})] \right) x_i x_j
\eeq
is ignored. However, if we follow the analysis in \cite{perry}, the norm of the matrix whose entries are given by \eqref{eq:error_entry} is of order $N^{-1/2}$, which is of the same size as the fluctuation of the largest eigenvalue of a spiked Wigner matrix in the supercritical case, and much larger than that of the largest eigenvalue in the subcritical case. It is thus impossible to ignore the term in \eqref{eq:error_entry}, but then the noise matrix is no more a Wigner matrix since the size of $x_i$ and $x_j$ may not be uniform. Another issue is that the second derivative term in the Taylor expansion in \eqref{eq:Taylor} is not negligible, since it is also of order $N^{-1/2}$ a priori. Thus, the correct approximation should be not by a (rank-$1$) spiked Wigner matrix but a rank-$2$ spiked Wigner-type matrix. (See Proposition \ref{prop:approx} for more detail.)

To prove the BBP transition for the rank-$2$ spiked Wigner-type matrix, we apply the Green function comparison argument. In this strategy, the distribution function of the largest eigenvalue is first approximated by a functional of the resolvent of the matrix. Then, by comparing the resolvent of the given matrix and that of the target matrix for which the limiting distribution of the largest eigenvalue is known, it is possible to prove that the fluctuations of the largest eigenvalues of two matrices are equal. The target matrices are a rank-$2$ spiked Wigner matrix for the supercritical case, and a Wigner matrix for the subcritical case. The Green function comparison argument is conducted by following a continuous matrix flow, typically called an interpolation, and it requires a technical input, known as the local law, which is the estimate on the resolvent of the matrices.

The rigidity of the largest eigenvalue is one of the main technical inputs for the Green function comparison argument, and it is also of separate interest since it asserts strong concentration of the largest eigenvalue in near-optimal scale. In the subcritical case, the rigidity is a consequence of a stronger estimate on the difference between the largest eigenvalues of the noise matrix with and without a spike, known as the eigenvalue sticking.

\subsection{Applications}

Our results can provide theoretical background for the understanding of transformed spiked Wigner matrices, especially (asymptotically) exact error probability of the improved PCA for the finite $N$ case. We list possible examples where transformed spiked Wigner matrices are useful.

\textbf{Improved PCA.} It was proved in \cite{perry} that PCA achieves the optimal strong detection threshold for a spiked Wigner matrix with Gaussian noise, in the sense that it is impossible to reliably distinguish (with probability $1-o(1)$) a Wigner matrix with a spike and one without a spike if $\lambda < 1$. The threshold for the strong detection is lowered if the noise is non-Gaussian, but it can be matched by improving PCA via entrywise transformation, with the function $f = -p'/p$ where $p$ is the density of the noise.


\textbf{Machine learning theory.} In deep neural networks, the transformed spiked Wigner matrix can be used in theoretical analysis, where the noise corresponds to the pre-activation, the spike is due to initial training, and the entrywise transformation is the activation function. Another important example is the feature matrix of the two-layer neural network, where the SNR corresponds to the step size.

\subsection{Related Works}

The spiked random matrix model was introduced by Johnstone \cite{Johnstone} for Wishart matrices (Gaussian i.i.d. rectangular matrices) with spiked covariance. The transition of the largest eigenvalue was proved by Baik, Ben Arous, and P\'ech\'e \cite{bbp} for spiked complex Wishart matrices and generalized to other models, including the spiked Wigner matrices \cite{peche,feralpeche,convergence,bgnadak,knowles2013isotropic,pizzo,fluctuation,knowles2014outliers,bloemendal2016principal}. The detection problems for the spiked Wigner matrices have been extensively studied recently, and many important results, including the theoretical limits \cite{perry,AlaouiJordan2018,chung2022asymptotic,moitra2023precise} and algorithms \cite{perry,chung2019weak,chung2022weak}, have been proved. For more results on the spiked Wigner matrices, we refer to \cite{moitra2023precise} and references therein.

The spectral properties of spiked random matrix models are important in analyzing various problems in machine learning and statistics, as spectral methods can improve convergence analysis and provides theoretical guarantees. Notable examples include the community detection \cite{Abbe2017} and submatrix localization \cite{Butucea2013}. In the context of machine learning, we refer to \cite{mondelli2021approximate} for an application to the approximate message passing algorithm and \cite{chi2019nonconvex} to non-convex optimization.

Entrywise transformed random matrix models provide valuable insights into the study of fundamental limits of detection for the signal from a spiked random matrix, which has been extensively studied by analyzing the eigenvalues \cite{Montanari2017,Onatski2015} or the mutual information and minimum mean squared error \cite{lesieur2015mmse,krzakala2016mutual,Barbier2016,LelargeMiolane}. Entrywise transformation of spiked Wigner matrices was first described by Lesieur, Krzakala, and Zdeborov\'a \cite{lesieur2015mmse} and rigorously analyzed by Perry, Wein, Bandeira, and Moitra \cite{perry}. The transition of the limit of the largest eigenvalue of transformed spiked Wigner matrices was first proved in \cite{perry}, where the correlation between the top eigenvector and the spike was also proved. Similar results were proved with weaker assumptions \cite{guionnet2023spectral,feldman2023spectral,moniri2024signal} and also generalized to other models \cite{jung2021detection,guionnet2023spectral,feldman2023spectral,mergny2024fundamental}, including spiked rectangular matrices.

The spiked random matrix models and their entrywise transformation have been extensively used in the theoretical study of neural networks. These models contribute to the theoretical analysis of deep neural network, where entrywise transformations correspond to pointwise nonlinear activation functions \cite{pennington2017nonlinear}. They are also crucial in the analysis on the feature learning. See, e.g., \cite{damian2022neural,ba2022high,lee2023demystifying,mousavi2023gradient,moniri2023theory,cuiasymptotics,ba2024learning}. For more results, we refer to \cite{cuiasymptotics} and references therein.

\subsection{Organization of the Paper}
The rest of the paper is organized as follows: In Section \ref{sec:main}, we precisely define the model and state our main results. In Section \ref{sec:proof} and Section \ref{sec:proof_sub}, we outline the proofs of our main results for the supercritical case and the subcritical case, respectively. In section \ref{sec:numerical}, we conduct numerical experiments  to compare theoretical results with numerical results for several examples. We conclude the paper in Section \ref{sec:conclusion} with a summary of our results and directions for future research. Details of the numerical simulations and the proofs of the results can be found in Appendices.

\begin{nrem}
	We use the standard big-O and little-o notation. For an event $\Omega$, we say that $\Omega$ holds with overwhelming probability if for any (large) $D > 0$ there exists $N_0 \equiv N_0 (D)$ such that $\p(\Omega^c) < N^{-D}$ whenever $N > N_0$. For a sequence of random variables, the notation $\Rightarrow$ denotes the convergence in distribution as $N\rightarrow\infty$.
\end{nrem}

\section{Main Result} \label{sec:main}

\subsection{Definition of the Model}

We first define the model we consider in this paper. We assume that the noise matrix is a Wigner matrix for which we use the following definition.

\begin{defn}[Wigner matrix] \label{defn:Wigner}
	We say an $N \times N$ random matrix $W = (W_{ij})$ is a Wigner matrix if $W$ is symmetric and $W_{ij}$ ($1\leq i \leq j\leq N$) are independent real random variables satisfying the following conditions:
	\begin{itemize}
		\item For all $i, j$, $\E[W_{ij}]=0$ and for any ($N$-independent) positive integer $D$, $N^{D/2} \E[|W_{ij}|^D] \leq C_D$ for some ($N$-independent) constant $C_D$.
		\item For all $i<j$, $N \E[|W_{ij}|^2]=1$.
	\end{itemize}
\end{defn}

Note that the upper triangle entries of a Wigner matrix are not necessarily identically distributed. The signal-plus-noise model we consider is a (rank-$1$) spiked Wigner matrix, which is defined as follows:

\begin{defn}[Spiked Wigner matrix] \label{defn:spiked_Wigner}
	We say an $N \times N$ random matrix $M = W + \sqrt{\lambda} \bsx \bsx^T$ is a spiked Wigner matrix with a spike $\bsx$ and signal-to-noise ratio (SNR) $\lambda$ if $W$ is a Wigner matrix and $\bsx = (x_1, x_2, \dots, x_N) \in \R^N$ with $\| \bsx \|_2 = 1$.
\end{defn}

Throughout the paper, we assume that $\lambda$ is independent of $N$. (See Remark \ref{rem:large_SNR} for the discussion on the case $\lambda \gg 1$.) In addition, we assume the following for the data matrix.

\begin{assump} \label{assump:entry}
	Let $M = W + \sqrt{\lambda} \bsx \bsx^T$ be a spiked Wigner matrix as in Definition \ref{defn:spiked_Wigner}. For the spike $\bsx = (x_1, x_2, \dots, x_N)$, we assume the following:
	\begin{itemize}
		\item For any $\epsilon > 0$, $\max_i |x_i| = O(N^{-\frac{1}{2} + \epsilon})$, $\sum_i x_i = O(N^{\epsilon})$, and $\sum_i x_i^3 = O(N^{-1+\epsilon})$.
	\end{itemize}	
	
	For the noise matrix $W$, we assume that the following:
	\begin{itemize}
		\item For all $i \leq j$, the normalized entries $\sqrt{N} W_{ij}$ are identically distributed.
		\item For all $i, j$ and any fixed $D$, the $D$-th moment of $\sqrt{N} W_{ij}$ is finite.
	\end{itemize} 
\end{assump}

We remark that the assumption on the spike is satisfied with the i.i.d. prior, where we let $y_i$ be i.i.d. with an $N$-independent distribution whose mean is $0$, variance $1$, and all moments are finite, and set $x_i = y_i /\sqrt{N}$. The assumption is also satisfied for many other priors such as the spherical prior. We believe our result can also be proved with weaker assumptions. Especially, the same result would hold even when the distribution of the diagonal entries are different from that of the off-diagonal entries. 
However, we do not pursue it in the current paper.

Lastly, we assume that the entrywise transformation satisfies the following properties.

\begin{assump} \label{assump:mapping}
	For a given Wigner matrix $W$, we assume that the following holds with the function $f:\mathbb{R} \to \mathbb{R}$.
	\begin{itemize}
		\item The function $f$ is $C^3$ and its derivatives are polynomially bounded in the sense that $|f^{(\ell)}(w)| \leq C_{\ell} (1+|w|)^{C_{\ell}}$ ($\ell = 0, 1, 2, 3$) for some constant $C_{\ell}$.
		\item For all $i$ and $j$, $\E[f(\sqrt{N} W_{ij})] = 0$. Furthermore, $\E[f(\sqrt{N} W_{12})^2] = 1$ and $\E[f'(\sqrt{N} W_{12})] \geq 0$.
	\end{itemize} 
\end{assump}

We remark that the last part of Assumption \ref{assump:mapping} is not restrictive, since it can be satisfied by simply multiplying the function $f$ by a suitable constant.

\subsection{Main Result}

Our main result is the following theorem on asymptotic normality of the largest eigenvalue of the transformed matrix.

\begin{thm}\label{thm:main}
	Suppose that $M$ is a spiked Wigner matrix, satisfying Assumption \ref{assump:entry}. Let $\wt M$ be a matrix defined by $\wt M_{ij} = N^{-\frac{1}{2}} f(\sqrt{N} M_{ij})$ for a function $f$ satisfying Assumption \ref{assump:mapping}. Let $\mu_1(\wt M)$ be the largest eigenvalue of $\wt M$. Set 
	\beq \label{eq:lambda_eff}
	\lambda_e := \lambda (\E[f'(\sqrt{N}W_{12})])^2.
	\eeq
	\begin{itemize}
		\item (Supercritical case) If $\lambda_e > 1$, then
		\beq\label{eq:main1}
		N^{1/2}\left( \mu_{1}(\wt M) -  \Bigl( \sqrt{\lambda_e} + \frac{1}{\sqrt{\lambda_e}} \Bigr) \right) \Rightarrow \mathcal{N}(0,\sigma^{2}),
		\eeq
		where the right-hand side of \eqref{eq:main1} is a Gaussian distribution with mean $0$ and variance
		\beq \label{eq:limiting_variance}
		\sigma^2 := \frac{2(\lambda_e - 1)}{\lambda_e} = \frac{2(\lambda (\E[f'(\sqrt{N}W_{12})])^2 - 1)}{\lambda (\E[f'(\sqrt{N}W_{12})])^2}.
		\eeq
		
		\item (Subcritical case) If $\lambda_e < 1$, then
		\beq\label{eq:main2}
		N^{2/3}\left( \mu_{1}(\wt M) -  2 \right) \Rightarrow TW_1,
		\eeq
		where the right-hand side of \eqref{eq:main2} is the GOE Tracy--Widom distribution.
	\end{itemize}
\end{thm}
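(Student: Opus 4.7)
First, reduce $\wt M$ to a rank-two spiked Wigner-type matrix via an entrywise Taylor expansion. Writing
\[
\wt M_{ij} = \frac{f(\sqrt{N} W_{ij})}{\sqrt{N}} + \sqrt{\lambda}\, f'(\sqrt{N} W_{ij})\, x_i x_j + \frac{\sqrt{N}\lambda}{2}\, f''(\sqrt{N} W_{ij})\, x_i^2 x_j^2 + R_{ij},
\]
one controls the cubic remainder $R_{ij}$ via the polynomial growth of $f^{(3)}$ together with $\max_i |x_i| = O(N^{-1/2+\epsilon})$. By Assumption~\ref{assump:mapping}, $\wt W_{ij} := N^{-1/2} f(\sqrt{N} W_{ij})$ is itself a Wigner matrix with unit normalization. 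Splitting $f'(\sqrt{N}W_{ij})$ and $f''(\sqrt{N}W_{ij})$ into means plus mean-zero fluctuations produces two rank-one ``mean'' spikes -- the main spike $\sqrt{\lambda_e}\, \bsx \bsx^T$, with $\lambda_e$ as in \eqref{eq:lambda_eff}, and a subleading spike $\tfrac{\sqrt{N}\lambda}{2}\E[f''(\sqrt{N}W_{12})]\, \bsy\bsy^T$ with $\bsy = \bsx^{\odot 2}$ -- together with ``fluctuation'' matrices whose $(i,j)$-entry is of the form $c_{ij}\, x_i x_j$ or $c_{ij}\, x_i^2 x_j^2$ with mean-zero $c_{ij} = O(1)$. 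The resulting identity $\wt M = \tM + E$, with $\tM$ a rank-two spiked Wigner-type matrix and $\|E\|$ much smaller than the relevant fluctuation scale, is the content of Proposition~\ref{prop:approx}.

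Second, one establishes the BBP transition for $\tM$ itself by the Green function comparison method. A local law for the resolvent $G(z) = (\tM - z)^{-1}$ on scale $N^{-1+\epsilon}$ near the spectral edge $2$ -- and, in the supercritical case, near the outlier location $\sqrt{\lambda_e}+1/\sqrt{\lambda_e}$ -- is first proved by combining the Wigner-type local law for the base with a Schur-complement argument that peels off the rank-two spike. In the supercritical regime, the outlier is characterized by an equation of the form $1 - \sqrt{\lambda_e}\, \bsx^T G_{\mathrm{base}}(z) \bsx = 0$ (plus small corrections from the second spike); the local law reduces the fluctuation of $\mu_1(\tM)$ to that of the quadratic form $\bsx^T G_{\mathrm{base}}(z_0) \bsx$ at $z_0 = \sqrt{\lambda_e}+1/\sqrt{\lambda_e}$, whose CLT with variance exactly $\sigma^2 = 2(\lambda_e-1)/\lambda_e$ is obtained by Gaussian integration by parts along a matrix interpolation to a spiked GOE. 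In the subcritical regime, the local law combined with an eigenvalue-sticking argument gives $\mu_1(\tM) - \mu_1(\wt W) = o(N^{-2/3})$ with overwhelming probability, and the GOE Tracy--Widom limit is inherited from the classical result applied to $\wt W$.

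\textbf{The main obstacle} is controlling the entrywise fluctuation matrix $\Delta_{ij} := \sqrt{\lambda}(f'_{ij}-\E[f'_{12}])\, x_i x_j$: its operator norm is of order $N^{-1/2+\epsilon}$, precisely the supercritical fluctuation scale, so Weyl's inequality cannot discard it, yet $\Delta$ is not low-rank. The saving feature is the rank-one variance profile $\sigma_{ij}^2 \asymp x_i^2 x_j^2$: writing $\Delta = D A D$ with $D = \mathrm{diag}(\bsx)$ and $A$ a Wigner-type matrix of $O(1)$ entries, any quadratic form $\bsv^T \Delta \bsv = (D\bsv)^T A (D\bsv)$ has mean zero and standard deviation of order $\|D\bsv\|^2 = O(N^{-1+2\epsilon}) = o(N^{-1/2})$ against the top (delocalized or near-$\bsx$) eigenvector. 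Embedding this control inside the Green function comparison -- propagating it through the local law despite the inhomogeneous variance inherited from the $x_i x_j$ factor -- is the technically delicate step. The subcritical analogue requires the same cancellation to survive at the finer scale $N^{-2/3}$ in the eigenvalue-sticking estimate at the Tracy--Widom edge.
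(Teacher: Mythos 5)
Your proposal follows the same architecture as the paper: Taylor expansion down to a rank-two spiked Wigner-type matrix (this is Proposition~\ref{prop:approx}), a local law for the resulting inhomogeneous noise from the general Wigner-type theory, and a Green function comparison to a reference ensemble; you also correctly identify the real obstacle, the full-rank fluctuation matrix $\Delta_{ij}=\sqrt{\lambda}(f'_{ij}-\E[f'_{ij}])x_ix_j$ of operator norm $\asymp N^{-1/2}$, and your $\Delta = DAD$ observation is exactly why the comparison closes. The two places where your route diverges are both in the endgame. In the supercritical case the paper does not prove a CLT for the quadratic form $\bsx^T G_{\mathrm{base}}(z_0)\bsx$ via a master equation; instead it rescales the entries of the noise so that the variance profile flattens along a flow $t\mapsto H(t)$ (see \eqref{eq:def_V(t)} and \eqref{eq:interpolation}), shows the $t$-derivative of the Green-function functional is $O(N^{-1/2+C\epsilon})$, and imports the Gaussian fluctuation of the endpoint $H(0)$ — a genuine rank-two spiked Wigner matrix — from \cite{fluctuation}. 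Your route is viable but requires proving the quadratic-form CLT yourself, and note that the variance of $\bsx^T G_{\mathrm{base}}(z_0)\bsx$ is not $\sigma^2$ itself: a Jacobian factor $(m_{sc}'(z_0))^{-2}$ from the master equation intervenes before one lands on $2(\lambda_e-1)/\lambda_e$. In the subcritical case, your statement $\mu_1(\tM)-\mu_1(\wt W)=o(N^{-2/3})$ cannot be obtained by eigenvalue sticking alone, because $\tM-\wt W$ contains the full-rank $\Delta$ with $\|\Delta\|\gg N^{-2/3}$; the paper splits this into a genuine sticking step from $H$ to the Wigner-type matrix $V$ that retains $\Delta$ (Proposition~\ref{prop:stick}), followed by a second Green function comparison (edge universality along the variance-rescaling flow) from $V=V(1)$ to the Wigner matrix $V(0)$. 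You acknowledge the need for the cancellation at scale $N^{-2/3}$, but the removal of $\Delta$ is a universality statement, not a sticking statement, and should be organized as such.
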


Theorem \ref{thm:main} shows that our model, the transformed spiked Wigner matrix, exhibits the BBP transition, and it coincides with that of the (non-transformed) spiked Wigner matrix even in terms of the fluctuation with $\lambda_e$ being the effective SNR.

In addition to Theorem \ref{thm:main}, we also have the following result on the rigidity of the largest eigenvalue.

\begin{thm}\label{thm:rigidity}
	Suppose that the assumptions in Theorem \ref{thm:main} hold. Then, for any $\epsilon > 0$, the following holds with overwhelming probability.
	\begin{itemize}
		\item (Supercritical case) If $\lambda_e > 1$, then
		\beq
		\mu_{1}(\wt M) -  \Bigl( \sqrt{\lambda_e} + \frac{1}{\sqrt{\lambda_e}} \Bigr) = O(N^{-\frac{1}{2}+\epsilon}).
		\eeq
		
		\item (Subcritical case) If $\lambda_e < 1$, then
		\beq
		\mu_{1}(\wt M) -  2 = O(N^{-\frac{2}{3} + \epsilon}).
		\eeq
		
	\end{itemize}
\end{thm}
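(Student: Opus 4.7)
The starting point is the precise version of \eqref{eq:Taylor} (Proposition \ref{prop:approx}), which writes, up to a cubic remainder negligible in operator norm,
\[
\wt M = W^{(f)} + \sqrt{\lambda_e}\,\bsx\bsx^T + R + E,
\]
where $W^{(f)}_{ij}:=N^{-1/2}f(\sqrt{N}W_{ij})$ is a Wigner matrix in the sense of Definition \ref{defn:Wigner} (by Assumption \ref{assump:mapping}); $R$ is the deterministic rank-one matrix coming from the second Taylor term, with entries $\tfrac{1}{2}\lambda\sqrt{N}\,\E[f''(\sqrt{N}W_{12})]\,x_i^2x_j^2$, so that $\|R\|=O(N^{-1/2+\epsilon})$ using $\sum_i x_i^4=O(N^{-1+2\epsilon})$; and $E_{ij}=\sqrt{\lambda}(f'(\sqrt{N}W_{ij})-\E[f'(\sqrt{N}W_{ij})])x_ix_j$ is the centered first-order error term, for which $\|E\|=O(N^{-1/2+\epsilon})$ holds with overwhelming probability by the bound indicated after \eqref{eq:error_entry}.

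\textbf{Supercritical case.} Here the target rigidity scale is exactly $N^{-1/2+\epsilon}$, which matches the norm bounds above. Two applications of Weyl's inequality therefore reduce the claim to rigidity for the pure rank-one spiked Wigner matrix $W^{(f)}+\sqrt{\lambda_e}\bsx\bsx^T$ with $\lambda_e>1$. Since $W^{(f)}$ is a genuine Wigner matrix, this is a classical result about outlier eigenvalues of finite-rank deformations of Wigner matrices and follows from the isotropic local law for $W^{(f)}$ combined with the rank-one secular equation $1-\sqrt{\lambda_e}\,\bsx^T(W^{(f)}-\mu)^{-1}\bsx=0$, using the concentration $\bsx^T(W^{(f)}-\mu)^{-1}\bsx\approx m_{sc}(\mu)$ up to fluctuations of size $N^{-1/2+\epsilon}$.

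\textbf{Subcritical case: eigenvalue sticking.} Now the target scale $N^{-2/3+\epsilon}$ is much smaller than $\|R\|+\|E\|\sim N^{-1/2}$, so Weyl is too weak. The plan is to prove the sticking estimate $\mu_1(\wt M)-\mu_1(W^{(f)})=O(N^{-2/3+\epsilon})$ with overwhelming probability, after which the classical rigidity $\mu_1(W^{(f)})-2=O(N^{-2/3+\epsilon})$ for a Wigner matrix closes the proof. Sticking is obtained by the standard secular-equation route: any eigenvalue $\mu$ of $\wt M$ separated from $\sigma(W^{(f)})$ makes the determinant of a Schur complement of $(W^{(f)}-\mu)^{-1}$ against the perturbation $\sqrt{\lambda_e}\bsx\bsx^T+R+E$ vanish; the isotropic local law for $W^{(f)}$ up to the spectral edge then shows, using $\lambda_e<1$, that this equation has no solution at distance $\gtrsim N^{-2/3+\epsilon}$ from $\mu_1(W^{(f)})$.

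\textbf{Main obstacle.} The crux is incorporating the non-low-rank matrix $E$ into the secular-equation analysis in the subcritical case. Although $\|E\|\sim N^{-1/2+\epsilon}$ is not negligible in operator norm, $E$ factors as $E=\mathrm{diag}(\bsx)\,A\,\mathrm{diag}(\bsx)$ with $A$ an independent centered symmetric matrix of entries of order one. Hence every quadratic form $\bsu^TE\bsv$ that appears in the analysis has both input vectors multiplied entrywise by $\bsx$, which under $\max_i|x_i|=O(N^{-1/2+\epsilon})$ reduces their $\ell^2$-norm to $O(N^{-1/2+\epsilon})$; a Hanson--Wright type bound then yields isotropic size $O(N^{-1+\epsilon})$, far smaller than the naive operator-norm estimate. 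This isotropic gain, rather than an operator-norm gain, is what lets the sticking argument close at scale $N^{-2/3+\epsilon}$, and is the principal technical load of the subcritical half of the proof.
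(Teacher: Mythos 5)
Your supercritical argument is essentially the paper's: reduce to $H$ via Proposition \ref{prop:approx}, peel off the centered first-derivative term and the second-derivative term at scale $N^{-1/2+\epsilon}$ by Weyl, and quote rigidity of the outlier of a finite-rank spiked Wigner matrix (the paper keeps the second-derivative term as a second spike and invokes Theorem 2.7 of \cite{knowles2013isotropic} after a diagonal-variance adjustment, but the content is the same). That half is fine.

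The subcritical half has a genuine gap, concentrated exactly where you locate the ``principal technical load.'' First, the factorization $E=\mathrm{diag}(\bsx)\,A\,\mathrm{diag}(\bsx)$ with ``$A$ an independent centered symmetric matrix'' is false as stated: $A_{ij}=\sqrt{\lambda}\,(f'(\sqrt{N}W_{ij})-\E[f'(\sqrt{N}W_{ij})])$ and $W^{(f)}_{ij}=N^{-1/2}f(\sqrt{N}W_{ij})$ are deterministic functions of the \emph{same} entry $W_{ij}$, so $E$ and $W^{(f)}$ are strongly correlated entrywise. The paper explicitly flags this correlation (just after \eqref{eq:var_V}) as the reason it does \emph{not} decompose the noise as $W^{(f)}+E$; your Hanson--Wright step, and more importantly every quadratic form of the type $\bsu^T(W^{(f)}-z)^{-1}E(W^{(f)}-z)^{-1}\bsv$ that the sticking analysis would need, involves this correlated randomness and cannot be treated as a bilinear form of an independent matrix against deterministic vectors. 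Second, even granting isotropic smallness of $E$, the Schur-complement/secular-equation framework only reduces to a finite-dimensional determinant for a \emph{finite-rank} perturbation; with the full-rank $E$ in the perturbation you are forced into a resolvent expansion in powers of $E$ at spectral parameters with $\eta\sim N^{-2/3}$, where $\|(W^{(f)}-z)^{-1}\|$ can be as large as $N^{2/3}$ while $\|E\|\sim N^{-1/2}$, so the naive expansion parameter is $N^{1/6}$ and the higher-order terms sandwich $E$ between random resolvents where the $O(N^{-1+\epsilon})$ bound for deterministic vectors no longer applies. You have not indicated how to close this. The paper's route avoids both problems: it absorbs $E$ into the noise, forming the Wigner-type matrix $V$ of \eqref{eq:def_V}, obtains $|\mu_1(V)-2|\prec N^{-2/3}$ from the Ajanki--Erd\H{o}s--Kr\"uger local law and rigidity for Wigner-type matrices, and then proves the sticking estimate $0\le\mu_1(H)-\mu_1(V)\le N^{-3/4}$ (Proposition \ref{prop:stick}) for the remaining \emph{exactly rank-two} perturbation $H-V$ by comparing $\Tr G$ and $\Tr R$ near the edge. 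To repair your proof you would either need to reproduce that absorption step or supply a genuinely new argument controlling the correlated, full-rank perturbation $E$ at the edge scale.
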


For the supercritial case, Theorem \ref{thm:rigidity} was essentially proved in \cite{perry}. For the sake of completeness, we will prove it in Appendices. The rigidity of the largest eigenvalue for the subcritical case is a consequence of a stronger result, known as the eigenvalue sticking. For a spiked Wigner matrix in the subcritical case, it means that the difference between the largest eigenvalue of a Wigner matrix $W$ and that of the spiked Wigner matrix $W + \sqrt{\lambda} \bsx \bsx^T$ is much less than $N^{-2/3}$, which is the size of the fluctuation of the largest eigenvalue. (See, e.g., Theorem 2.7 in \cite{knowles2013isotropic}.)

\begin{rem} \label{rem:rank-k}
	Adapting the strategy for the proof of Theorems \ref{thm:main} and \ref{thm:rigidity}, we believe that it is also possible to prove corresponding statements for transformed rank-$k$ spiked Wigner matrices for any fixed $k \geq 2$. A rank-$k$ spiked Wigner matrix is of the form
	\beq \label{eq:rank-k}
	M = W + \sum_{i=1}^k \sqrt{\lambda^{(i)}} \bsx^{(i)} (\bsx^{(i)})^T,
	\eeq
	where $W$ is an $N \times N$ Wigner matrix, $\bsx^{(i)} \in \mathbb{R}^N$ with $\langle \bsx^{(i)}, \bsx^{(j)} \rangle =\delta_{ij}$, and $\lambda^{(1)} > \dots > \lambda^{(k)} > 0$. Let $\wt M$ be the transformed matrix defined by $\wt M_{ij} = N^{-\frac{1}{2}} f(\sqrt{N} M_{ij})$ and let $\lambda_e^{(i)} = \lambda^{(i)} (\E[f'(\sqrt{N}W_{12})])^2$. Suppose that
	\[
	\lambda_e^{(1)} > \dots > \lambda_e^{(\ell)} > 1 > \lambda_e^{(\ell+1)} > \dots > \lambda_e^{(k)} > 0.
	\]
	It was proved in \cite{jung2024detection} that, almost surely, (1) if $i \leq \ell$, then $\mu_i (\wt M) \to \sqrt{\lambda^{(i)}_e} + \frac{1}{\sqrt{\lambda^{(i)}_e}}$ and (2) if $i > \ell$, then $\mu_i (\wt M) \to 2$. (While Theorem 1 in \cite{jung2024detection} is stated only for the optimal transformation of the form $-p'/p$, the same proof works for any transformation.) The BBP transition for $\wt M$ would assert that (1) if $i < \ell$, then $N^{1/2}\left( \mu_{i}(\wt M) -  \Bigl( \sqrt{\lambda^{(i)}_e} + \frac{1}{\sqrt{\lambda^{(i)}_e}} \Bigr) \right)$ converges in distribution to a centered Gaussian with variance $2(\lambda^{(i)}_e-1)/\lambda^{(i)}_e$ and (2) if $i > \ell$, then the fluctuation of $\mu_i (\wt M)$ coincides with that of the $(i-\ell)$-th largest eigenvalue of a GOE matrix. However, to avoid complication, we refrain from considering the rank-$k$ models in this paper.
\end{rem}

\begin{rem} \label{rem:large_SNR}
	If $\lambda_e=0$ in Theorem \ref{thm:main}, it is natural to consider a different scaling of the SNR $\lambda$ for the BBP transition. Suppose that $\E[f'(\sqrt{N}W_{12})] = 0$ and $\E[f''(\sqrt{N}W_{12})] \neq 0$. With the scaling $\lambda \equiv \lambda(N) = \lambda_0 \sqrt{N}$, the Taylor expansion in \eqref{eq:Taylor} is then
		\[N^{-\frac{1}{2}} f(\sqrt{N} M_{ij}) 
		\approx \frac{f(\sqrt{N} W_{ij})}{\sqrt{N}} + \frac{1}{2} \lambda_0 N \E[f''(\sqrt{N} W_{ij})] x_i^2 x_j^2.
	 \]
	This suggests that the effective SNR 
	\[
	\wt\lambda_e = (\lambda_0^2 /4) N^2 \E[f''(\sqrt{N} W_{12})]^2 \| \bsx^2 \|^4,
	\]
	where $\bsx^2 = (x_1^2, x_2^2, \dots, x_N^2)$. Notice that for the i.i.d. prior, $\| \bsx^2 \|^2 = N^{-1} \E[x_1^4] + o(N^{-1})$. 
	
	In \cite{guionnet2023spectral}, it was proved that the limit of the largest eigenvalue exhibits the BBP transition with the effective SNR $\wt\lambda_e$. Adapting the strategy for the proof of Theorems \ref{thm:main} and \ref{thm:rigidity}, it is also possible to prove that the corresponding statements for the fluctuation of the largest eigenvalue. We remark that the actual statement also contains a deterministic shift term of order $N^{-1/2}$; see \eqref{eq:appD_main} for more precise statement. In Appendix \ref{app:SNR_scaling}, we provide the idea of the proof for this case. It can be further generalized by considering 
	\[
	k_f := \inf \{ k \in \mathbb{Z}^+: \E[f^{(k)}(\sqrt{N} W_{ij})] \neq 0 \}
	\]
	and the scaling $\lambda = \lambda_0 N^{\frac{1}{2}(1-\frac{1}{k_f})}$. See Appendix \ref{app:fluc} for more discussion.
\end{rem}

\section{Outline of the Proof - Supercritical Case} \label{sec:proof}

In this section, we outline the proof of the first part of our main result, Theorem \ref{thm:main}. (The detailed proofs for the results in Section \ref{sec:proof} can be found in Appendix \ref{app:proof_sup}.) Throughout Section \ref{sec:proof}, we assume $\lambda_e > 1$.

\subsection{Approximation by a Spiked Random Matrix} \label{subsec:approximation}

We begin the proof of Theorem \ref{thm:main} by approximating the transformed matrix $\wt M$ by a spiked random matrix. As discussed in Introduction, it is required to approximate $\wt M$ by a spiked Wigner-type matrix instead of a spiked Wigner matrix, which is obtained from the Taylor expansion for the entrywise transformation $f$. The first step of this approximation is the following proposition:
\begin{prop} \label{prop:approx}
	Suppose that $M$ is a spiked Wigner matrix satisfying Assumption \ref{assump:entry}. Let $\wt M$ be a matrix defined by $\wt M_{ij} = N^{-\frac{1}{2}} f(\sqrt{N} M_{ij})$. Define an $N \times N$ random matrix $H$ by
	\beq  \label{eq:H_def}
		H_{ij} = \frac{f(\sqrt{N} W_{ij})}{\sqrt{N}} + \sqrt{\lambda} f'(\sqrt{N} W_{ij}) x_i x_j  + \frac{\lambda}{2} \E[f''(\sqrt{N} W_{ij})] \sqrt{N} x_i^2 x_j^2.
	 \eeq
	Let $\mu_1(\wt M)$ and $\mu_1(H)$ be the largest eigenvalues of $\wt M$ and $H$, respectively. Then, for any $\epsilon > 0$, $\mu_1(\wt M)-\mu_1(H) = O(N^{-1+\epsilon})$ with overwhelming probability.
\end{prop}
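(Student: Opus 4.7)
The plan is to perform a third-order Taylor expansion of $f$ around $\sqrt{N} W_{ij}$ so that $H$ absorbs the first three terms (with the $f''$-term centered), and then to bound the operator norm of $\wt M - H$ and invoke Weyl's inequality $|\mu_1(\wt M) - \mu_1(H)| \leq \|\wt M - H\|$. By Taylor's theorem with Lagrange remainder, for each $(i,j)$ there exists $\xi_{ij}$ between $\sqrt{N} W_{ij}$ and $\sqrt{N} M_{ij}$ with
\beq
\wt M_{ij} - H_{ij} = \underbrace{\frac{\lambda \sqrt{N}}{2} x_i^2 x_j^2 \bigl( f''(\sqrt{N} W_{ij}) - \E[f''(\sqrt{N} W_{ij})] \bigr)}_{A_{ij}} + \underbrace{\frac{\lambda^{3/2} N}{6} x_i^3 x_j^3 f'''(\xi_{ij})}_{B_{ij}}.
\eeq
It therefore suffices to show $\|A\|, \|B\| = O(N^{-1+\epsilon})$ with overwhelming probability for any $\epsilon > 0$.

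The remainder $B$ is controlled entrywise. Since $\max_i |x_i| = O(N^{-1/2+\epsilon})$ the perturbation satisfies $\sqrt{\lambda N}|x_i x_j| = O(N^{-1/2+\epsilon})$, so $|\xi_{ij}| \leq \sqrt{N}|W_{ij}| + 1$. The polynomial growth of $f'''$, the moment bounds on $\sqrt{N} W_{ij}$ in Assumption \ref{assump:entry}, and a union bound over $N^2$ entries yield $\max_{ij}|f'''(\xi_{ij})| = O(N^{\epsilon})$ with overwhelming probability; combined with $|x_i x_j|^3 = O(N^{-3+\epsilon})$, this gives $\max_{ij} |B_{ij}| = O(N^{-2+\epsilon})$ and hence $\|B\| \leq \|B\|_F \leq N \max_{ij}|B_{ij}| = O(N^{-1+\epsilon})$. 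For $A$ I exploit the factorization $A = D C D$, where $D$ is diagonal with $D_{ii} = x_i^2$ and the symmetric matrix $C$ has entries $C_{ij} = \frac{\lambda \sqrt{N}}{2}\bigl(f''(\sqrt{N} W_{ij}) - \E[f''(\sqrt{N} W_{ij})]\bigr)$. Under Assumption \ref{assump:entry}, the upper-triangular entries of $C$ are independent, identically distributed, mean zero, with variance of order $N$ (by polynomial growth of $f''$ and the moment bounds on $\sqrt{N} W_{ij}$), and all moments of $C_{ij}/\sqrt{N}$ are finite. Standard operator-norm estimates for symmetric random matrices with independent entries, applied to the Wigner-type matrix $C/N$ of entry variance $O(1/N)$, give $\|C\| = O(N)$ with overwhelming probability. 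Since $\|D\| = \max_i x_i^2 = O(N^{-1+\epsilon})$, I conclude $\|A\| \leq \|D\|^2 \|C\| = O(N^{-1+\epsilon})$, and Weyl's inequality finishes the proof.

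The main obstacle is the estimate on $\|A\|$. A direct Frobenius bound yields only $\E\|A\|_F^2 = O(N)\bigl(\sum_i x_i^4\bigr)^2 = O(N^{-1+\epsilon})$, hence $\|A\|_F = O(N^{-1/2+\epsilon})$, which is too large by a factor of roughly $\sqrt{N}$ and, crucially, of the same order as the supercritical fluctuation scale $N^{-1/2}$ that Theorem \ref{thm:main} tracks. The factorization $A = DCD$ is the key device: it converts the estimate into the operator-norm control of a Wigner-type random matrix, which concentrates on a scale much smaller than its Frobenius norm. The requirement in Assumption \ref{assump:entry} that the normalized entries $\sqrt{N} W_{ij}$ be identically distributed is exactly what allows the application of the standard Wigner operator-norm bound to $C$.
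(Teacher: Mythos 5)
Your proposal is correct and follows essentially the same route as the paper: a Taylor expansion, a Frobenius/entrywise bound on the third-order remainder, the key observation that the centered $f''$-term must be handled by an operator-norm (not Frobenius) argument via sandwiching a Wigner-type matrix between $\mathrm{diag}(x_i^2)$ factors, and Weyl's inequality. The paper's treatment of the quadratic form $\sum_{i,j} F''_{ij}(x_i^2 v_i)(x_j^2 v_j)$ is exactly your $A = DCD$ factorization in disguise, so the two arguments coincide.
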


Proposition \ref{prop:approx} basically asserts that in the Taylor expansion for $f$, the terms involving the third or higher derivatives can be ignored. Moreover, the terms involving the second derivative can only affect the largest eigenvalue via its expectation. Note that it is an a priori estimate and we will eventually see that in the Taylor expansion, only the terms involving the first derivative affect the largest eigenvalue, via its expectation, which justifies the approximation in \eqref{eq:Taylor}.

Our strategy for the proof of Theorem \ref{thm:main} is to compare the largest eigenvalue of $H$ with that of a rank-$2$ spiked Wigner matrix (in the sense of \eqref{eq:rank-k}) for which the asymptotic normality of the fluctuation of the outlier eigenvalue is known. The main obstacle is that the `noise' matrix in $H$ is not a Wigner matrix. To analyze this more in detail, let us introduce the short-handed notation
\beq  \label{eq:f_short}
	f_{ij} := f(\sqrt{N} W_{ij}), \quad f'_{ij} := f'(\sqrt{N} W_{ij}), \quad
	f''_{ij} := f''(\sqrt{N} W_{ij}), \quad f^{(k)}_{ij} := f^{(k)}(\sqrt{N} W_{ij}).
 \eeq
(From the assumption on the function $f$, we have that $\E[f_{ij}]=0$ for all $i, j$, and $\E[f_{12}^2]=1$.)
If we denote the noise matrix in $H$ by $V$, whose entries are
\beq \label{eq:def_V}
V_{ij} = \frac{f_{ij}}{\sqrt{N}} + \sqrt{\lambda} (f'_{ij} -\E[f'_{ij}]) x_i x_j,
\eeq
then the matrix $V$ is a symmetric random matrix whose upper triangle entries are independent random variables (up to symmetric constraint), satisfying $\E[V_{ij}] = 0$, but
\beq \begin{split} \label{eq:var_V}
	\E[(V_{ij})^2] &= \frac{1}{N} + \frac{2\sqrt{\lambda}}{\sqrt{N}} \left( \E[f_{ij} f'_{ij}] - \E[f_{ij}] \E[f'_{ij}] \right) x_i x_j + \lambda \left( \E[(f'_{ij})^2] - \E[f'_{ij}]^2 \right) x_i^2 x_j^2 \\
	&=: \frac{1}{N} ( 1 + C_1^V \sqrt{N} x_i x_j + C_2^V N x_i^2 x_j^2).
\end{split} \eeq
Thus, $V$ is not a Wigner matrix but a Wigner-type matrix. 

Although it might seem natural to interpolate between $V_{ij}$ and $f_{ij}/\sqrt{N}$, this approach is complicated by the correlation between $f_{ij}/\sqrt{N}$ and the sub-leading order term $\sqrt{\lambda} (f'_{ij} -\E[f'_{ij}]) x_i x_j$. In this paper, we instead consider another interpolation, defined by a matrix $V(t)$ for $t \in [0,1]$ whose entries are given by
\beq \label{eq:def_V(t)}
V(t)_{ij} := \sqrt{\frac{1 + C_1^V t \sqrt{N} x_i x_j + C_2^V t N x_i^2 x_j^2}{N \E[(V_{ij})^2]}} V_{ij}.
\eeq
By definition, $V(1) = V$ and $V(t)$ is a Wigner-type matrix for any $t \in [0, 1]$. We can also see that $V(0)$ is a Wigner matrix whose off-diagonal entries are not identically distributed. Note that it does not affect our proof since the fluctuation of the outlier eigenvalue is known for spiked Wigner matrices, even when the entries of the Wigner matrix are not identically distributed.

With $V(t)$, we finally introduce the desired interpolation, defined by a matrix $H(t)$ for $t \in [0,1]$ with entries
\beq \label{eq:interpolation}
H(t)_{ij} = V(t)_{ij} + \sqrt{\lambda} \E[f'_{ij}] x_i x_j + \frac{\lambda}{2} \E[ f''_{ij} ] \sqrt{N} x_i^2 x_j^2.
\eeq
Note that $H(0)$ is a rank-$2$ spiked Wigner matrix and $H = H(1)$.

\subsection{Local Law} \label{subsec:local_law}

To estimate the change of the largest eigenvalue along the matrix flow given by $H(t)$, we use the Green function comparison argument. It relies on the fact that the distribution function of the largest eigenvalue of a matrix can be well approximated by a certain functional of the resolvent of the matrix. (See Proposition \ref{prop:tr} for an example of such a functional.) The main technical input necessary for the Green function comparison is an estimate for the resolvent of the matrix, known as the local law. We will use the following local law for the resolvent of $H(t)$.

\begin{prop} \label{prop:local}
	Let $H(t)$ be the matrix defined in \eqref{eq:interpolation}. Define the resolvent of $H(t)$ by
	\beq \label{eq:resolvent_G}
	G(t, z) := (H(t) - zI)^{-1} \quad (\im z \geq 0).
	\eeq
	Set $z \equiv \tau + \ii \eta$ and
	\[
	\kappa := \tau - \left( \sqrt{\lambda} \E[f'_{12}] + \frac{1}{\sqrt{\lambda} \E[f'_{12}]} \right).
	\]
	Suppose that $|\kappa| \leq N^{-\frac{1}{2} + \epsilon}$ and $\eta = N^{-\frac{1}{2} - \epsilon}$ for a fixed $\epsilon > 0$. Then,
	\beq \label{eq:local_G}
	\max_{i, j} |G_{ij}(t, z) - m_{sc}(z) \delta_{ij}| = \caO(N^{-\frac{1}{2} + 6\epsilon}),
	\eeq
	uniformly on $t \in [0, 1]$, where $m_{sc}$ is the Stieltjes transform of the Wigner semicircle law, given by $m_{sc}(z) = (-z + \sqrt{z^2 -4})/2$.
\end{prop}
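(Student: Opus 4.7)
The strategy is to reduce the claim to an isotropic local law for the Wigner-type noise matrix $V(t)$ and absorb the rank-$\le 2$ spike via a Sherman--Morrison--Woodbury expansion. Set $\bsu_1 := \bsx$, $\bsu_2 := \bsx^{\circ 2}/\|\bsx^{\circ 2}\|_2$ (entrywise square, normalized), $\alpha_1 := \sqrt{\lambda}\E[f'_{12}]$, and $\alpha_2 := \tfrac{\lambda}{2}\E[f''_{12}]\sqrt{N}\,\|\bsx^{\circ 2}\|_2^2$, so that $H(t) - V(t) = \alpha_1 \bsu_1\bsu_1^T + \alpha_2 \bsu_2\bsu_2^T$. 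Assumption \ref{assump:entry} gives $\|\bsx^{\circ 2}\|_2^2 \le (\max_i|x_i|)^2 = O(N^{-1+2\epsilon})$ and hence $\alpha_2 = O(N^{-1/2+2\epsilon})$. With $U := [\bsu_1 \mid \bsu_2]$ and $B := \mathrm{diag}(\alpha_1,\alpha_2)$, the Woodbury identity gives
\[
G^{H(t)}(z) = G^{V(t)}(z) - G^{V(t)}(z)\,U\,K(z)^{-1}U^T G^{V(t)}(z),\qquad K(z) := B^{-1} + U^T G^{V(t)}(z)\,U.
\]

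\textbf{Isotropic local law for $V(t)$ and control of $K$.} The matrix $V(t)$ is a generalized Wigner matrix whose variance profile satisfies $N\E[V(t)_{ij}^2] = 1 + O(N^{-1/2+2\epsilon})$ entrywise and whose row sums equal $N(1+o(1))$, so the standard entrywise and isotropic local laws for generalized Wigner matrices identify $m_{sc}$ as the correct deterministic equivalent up to negligible corrections. Since $z$ lies at distance $\Theta(1)$ from $[-2,2]$, $m_{sc}$ is analytic there and
\beq \label{eq:plan-iso}
\bigl|\langle \bsu,\,G^{V(t)}(z)\bsv\rangle - m_{sc}(z)\langle \bsu,\bsv\rangle\bigr| = O(N^{-1/2+\epsilon})
\eeq
holds uniformly in $t \in [0,1]$ and in deterministic unit vectors $\bsu,\bsv$, with overwhelming probability. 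Applying \eqref{eq:plan-iso} on $(\bsu_1,\bsu_1)$, $(\bsu_1,\bsu_2)$, $(\bsu_2,\bsu_2)$, using $\bsu_1^T \bsu_2 = \sum_i x_i^3/\|\bsx^{\circ 2}\|_2 = O(N^{-1/2})$ (by Assumption \ref{assump:entry}), and invoking the algebraic identity $1/\alpha_1 + m_{sc}(z_0) = 0$ at $z_0 = \sqrt{\lambda_e}+1/\sqrt{\lambda_e}$, we obtain $K_{11}(z) = O(N^{-1/2+\epsilon})$, $K_{12}(z) = O(N^{-1/2+\epsilon})$, $K_{22}(z) = 1/\alpha_2 + O(1) = \Theta(N^{1/2-2\epsilon})$, together with the crucial lower bound $|K_{11}(z)| \geq |\im K_{11}(z)| \geq c\eta = cN^{-1/2-\epsilon}$ coming from positivity of $\im G^{V(t)}(z) = \eta\, G^{V(t)}(z)^*G^{V(t)}(z)$ and edge rigidity of $V(t)$. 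A $2\times 2$ Schur complement then shows $K_{12}^2/K_{22}$ is negligible compared to $|K_{11}|$ and yields $|K^{-1}_{11}| = O(N^{1/2+\epsilon})$, $|K^{-1}_{12}| = O(N^{-1/2+4\epsilon})$, $|K^{-1}_{22}| = O(N^{-1/2+2\epsilon})$.

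\textbf{Closing the bound and main obstacle.} Using \eqref{eq:plan-iso} with $\bsu = \bse_i$ together with the delocalisation $|(\bsu_1)_i|, |(\bsu_2)_i| = O(N^{-1/2+2\epsilon})$ from Assumption \ref{assump:entry}, we get $|(G^{V(t)}(z)\bsu_a)_i| \leq |m_{sc}(z)||(\bsu_a)_i| + O(N^{-1/2+\epsilon}) = O(N^{-1/2+\epsilon})$. Substituting into Woodbury, the dominant contribution is from $K^{-1}_{11}$, and
\[
|G^{H(t)}_{ij}(z) - G^{V(t)}_{ij}(z)| \leq \sum_{a,b} |(G^{V(t)}\bsu_a)_i|\,|K^{-1}_{ab}|\,|(G^{V(t)}\bsu_b)_j| = O(N^{-1/2+3\epsilon}),
\]
which combines with \eqref{eq:plan-iso} at $\bsu = \bse_i$, $\bsv = \bse_j$ to produce \eqref{eq:local_G} after absorbing the $\epsilon$-powers. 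The chief difficulty lies precisely in this last step: $K(z)$ is \emph{near-singular} because $z$ is near the outlier $z_0$, so $|K^{-1}_{11}|$ can be as large as $\eta^{-1} = N^{1/2+\epsilon}$, and only the delocalisation bound $|x_i|^2 = O(N^{-1+2\epsilon})$ from Assumption \ref{assump:entry} prevents the Woodbury correction from blowing up. Balancing this near-singularity against the smallness of the spike components---while simultaneously controlling the off-diagonal block $K_{12}$ and the weak coupling to the auxiliary direction $\bsu_2$ produced by the second-derivative term---is the technical heart of the argument.
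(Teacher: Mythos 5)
Your proposal is correct and follows essentially the same route as the paper: both reduce the claim to an (an)isotropic local law for the Wigner-type matrix $V(t)$ and then control the rank-$2$ resolvent correction, with the identical key input $|1+\sqrt{\lambda}\E[f'_{12}]\langle \bsx, R\bsx\rangle| \ge \sqrt{\lambda}\E[f'_{12}]\,\im\langle \bsx, R\bsx\rangle \ge c\eta$; your Woodbury/Schur-complement packaging is just a more systematic version of the paper's iterated resolvent identities (the paper treats the $\bsx^{\circ 2}$ direction crudely via $\|G\|\le \eta^{-1}$ together with the extra smallness of $\langle \bse_i, R\bsx^{\circ 2}\rangle$, rather than through the $K_{22}$ block). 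The one step you assert rather than prove is that $m_{sc}$ is the correct deterministic equivalent for the Wigner-type matrix $V(t)$ — the paper devotes Lemma \ref{lem:local} to this, perturbatively solving the quadratic vector equation of \cite{wigner-type} and using $\sum_i x_i = O(N^{\epsilon})$ to get $|m_i - m_{sc}| = O(N^{-1+2\epsilon})$ — but this is a provable fact consistent with your sketch, not a flaw in the approach.
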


For the proof of Proposition \ref{prop:local}, we first prove a corresponding result for the resolvent of $V(t)$ and then apply some resolvent identities. Define the resolvent of $V(t)$ by 
\beq \label{eq:resolvent_R}
R(t, z) := (V(t) - zI)^{-1} \quad (\im z \geq 0).
\eeq
A local law for $R(t, z)$ was proved in \cite{wigner-type} where $R_{ij}(t, z)$ was approximated by a deterministic number $m_i(t, z)$, given as the solution of the quadratic vector equation
\beq \label{eq:def_m_i}
-\frac{1}{m_i(t, z)} = z + \sum_j \E[(V(t)_{ij})^2] m_j(t, z).
\eeq
Note that if $\E[(V(t)_{ij})^2] = N^{-1}$ for all $i, j$, then \eqref{eq:def_m_i} is satisfied by $m_i(0, z) = m_{sc}(z)$ for all $i$, since $-1/m_{sc}(z) = z + m_{sc}(z)$. 

For our purpose, the local law involving $m_i(t, z)$ is not sufficient since we need an estimate on $R_{ij}(t, z)$ that is uniform in $i, j$. To obtain such an estimate, we notice that while $V(t)$ is not a Wigner matrix for $t>0$, it is asymptotically a Wigner matrix in the sense that $\E[(V(t)_{ij})^2] = N^{-1} + O(N^{-\frac{3}{2} + 2\epsilon})$. Thus, it is possible to bound the difference $|m_{sc}(z) - m_i(t, z)|$ and prove the desired local law for $R(t, z)$. (See Lemma \ref{lem:local} for the precise statement.)

\subsection{Green Function Comparison} \label{subsec:comparison}

With the local law in Proposition \ref{prop:local}, we can compare functionals of the resolvents of $H(0)$ and $H(1)$. Before the comparison, we first introduce a well-known example of such a functional, in Proposition \ref{prop:tr}. We apply the following three-step procedure: (i) the cutoff of the eigenvalues by an indicator function on a small interval around the limit of the largest eigenvalue (by $\chi_{E}$ in Proposition \ref{prop:tr}), (ii) mollification of the cutoff function by the Poisson kernel (by $\theta_{\eta}$ in Proposition \ref{prop:tr}), and (iii) approximation of the distribution function of the largest eigenvalue by the (mollified) cutoff function (by $K$ in Proposition \ref{prop:tr}).
\begin{prop}\label{prop:tr}
	Let $H(t)$ be the matrix defined in \eqref{eq:interpolation} and denote by $\mu_{1}(H(t))$ the largest eigenvalue of $H(t)$. Define
	\[L := \sqrt{\lambda_e} + \frac{1}{\sqrt{\lambda_e}} = \evlim\]
	Fix $\epsilon > 0$. Let $E \in \mathbb{R}$ such that $\vert E -L \vert \leq N^{-1/2+\epsilon}$. Let $E_{+} := L + 2N^{-1/2+\epsilon}$ and define $\chi_{E}:= \chi_{[E,E_{+}]}.$ Set $\eta_{1}:=N^{-1/2-\epsilon/2}$ and $\eta_{2}:=N^{-1/2-3\epsilon}$. Let $K : \mathbb{R} \rightarrow [0,\infty)$ be a smooth function satisfying 
	\[
	K(x) = \begin{cases} 1 \qquad \text{if }\vert x \vert < 1/3 \\
		0 \qquad \text{if } \vert x \vert > 2/3 
	\end{cases}\]
	which is a monotone decreasing function on $[0,\infty)$. Define the Poisson kernel $\theta_{\eta}$ for $\eta>0$ by
	\[
	\theta_{\eta}(y) := \frac{\eta}{\pi(y^{2}+\eta^{2})}.
	\]
	Then, for any $D > 0,$
	\[ \mathbb{E}[K(\Tr(\chi_{E} \ast \theta_{\eta_{2}})(H))] > \mathbb{P}(\mu_{1}(H(t)) \leq E-\eta_{1}) - N^{-D}\]
	and
	\[ \mathbb{E}[K(\Tr(\chi_{E} \ast \theta_{\eta_{2}})(H))] < \mathbb{P}(\mu_{1}(H(t)) \leq E-\eta_{1}) + N^{-D}\]
	for any sufficiently large $N$.
\end{prop}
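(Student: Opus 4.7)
The plan is to realize the trace of the Poisson-smoothed indicator as a sharp count of eigenvalues of $H(t)$ in the window $[E,E_+]$ and combine it with eigenvalue rigidity. Spectral calculus gives
\[
\Tr(\chi_E \ast \theta_{\eta_2})(H(t)) = \sum_{i=1}^N (\chi_E \ast \theta_{\eta_2})(\mu_i) = \sum_{i=1}^N \frac{1}{\pi}\left[\arctan\frac{\mu_i - E}{\eta_2} - \arctan\frac{\mu_i - E_+}{\eta_2}\right],
\]
where $\mu_1 \geq \dots \geq \mu_N$ denote the eigenvalues of $H(t)$. The smooth cutoff $K$ is designed so that this trace being much less than $1/2$ forces $K = 1$ while the trace being much greater than $1/2$ forces $K = 0$.

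First I would invoke rigidity on two scales. Theorem \ref{thm:rigidity} applied with exponent $\epsilon/2$ gives $|\mu_1 - L| \leq N^{-1/2+\epsilon/2}$ with overwhelming probability, so on this event $E_+ - \mu_1 \geq N^{-1/2+\epsilon}$. Bulk rigidity for $H(t)$, which follows from the local law in Proposition \ref{prop:local} together with the standard arguments of \cite{wigner-type}, gives $\mu_i \leq 2 + N^{-2/3+\epsilon}$ for all $i \geq 2$ with overwhelming probability. Since $L - 2 = (\sqrt{\lambda_e}-1)^2/\sqrt{\lambda_e} > 0$, every $\mu_i$ with $i \geq 2$ sits at macroscopic distance from $[E,E_+]$, so each summand for $i \geq 2$ is bounded by $\eta_2(E_+-E)/[(L-2)/2]^2 = O(N^{-1-2\epsilon})$, for a total bulk contribution of $O(N^{-2\epsilon}) = o(1)$.

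Next I would analyze the contribution of $\mu_1$. On $\{\mu_1 \leq E - \eta_1\}$ one has $|\mu_1 - x| \geq \eta_1$ for $x \in [E,E_+]$, hence $(\chi_E \ast \theta_{\eta_2})(\mu_1) \leq \eta_2(E_+-E)/(\pi\eta_1^2) = O(N^{-\epsilon})$, so together with the bulk bound the full trace is less than $1/3$ and $K(\cdots) = 1$. On $\{\mu_1 \geq E + \eta_1\}$ intersected with the rigidity event, both $(\mu_1-E)/\eta_2 \geq N^{5\epsilon/2}$ and $(E_+-\mu_1)/\eta_2 \geq N^{4\epsilon}$ are large, so the two arctangents lie within $O(N^{-5\epsilon/2})$ of $\pm \pi/2$, giving $(\chi_E\ast\theta_{\eta_2})(\mu_1) = 1 - O(N^{-5\epsilon/2})$; hence the trace exceeds $2/3$ and $K(\cdots) = 0$. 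Writing $\Omega$ for the joint rigidity event, the deterministic pointwise bounds $K(\Tr(\cdots)) \geq \indi_{\{\mu_1 \leq E - \eta_1\}} \indi_\Omega$ and $K(\Tr(\cdots)) \leq \indi_{\{\mu_1 \leq E + \eta_1\}} + \indi_{\Omega^c}$ then hold; taking expectations with $\mathbb{P}(\Omega^c) \leq N^{-D}$ yields the two inequalities of the proposition (where the right-hand side of the second inequality is naturally read as $\mathbb{P}(\mu_1(H(t)) \leq E + \eta_1) + N^{-D}$).

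The main obstacle is the transition window $|\mu_1 - E| \leq \eta_1$, on which the Poisson-smoothed $\chi_E$ is neither close to $0$ nor close to $1$ when evaluated at $\mu_1$, so $K$ may take any value in $[0, 1]$. The two-sided sandwich above sidesteps the need for any refined pointwise estimate of $K$ there by relying only on $0 \leq K \leq 1$, and the separation of scales $\eta_1/\eta_2 = N^{5\epsilon/2}$ ensures the arctangent transitions are essentially complete outside this window, which is precisely what makes the two-sided bound tight up to the rigidity loss $N^{-D}$.
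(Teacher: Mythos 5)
Your proposal is correct and follows essentially the same route as the paper: separate the bulk eigenvalues from the outlier via rigidity, show the Poisson-smoothed count is $o(1)$ when $\mu_1 \leq E-\eta_1$ and $1-o(1)$ when $\mu_1 \geq E+\eta_1$ (reading the second inequality with $E+\eta_1$, as the paper's own proof does), and convert this to the two probability bounds --- your direct two-sided sandwich using $0 \leq K \leq 1$ replaces the paper's equivalent step via integrality of $\Tr\chi_{E-\eta_1}(H)$ and monotonicity of $K$. The only cosmetic discrepancy is that the separation of $\mu_2,\dots,\mu_N$ from $L$ should be sourced from Proposition \ref{prop:eigenvalues} together with the convergence and sticking results of \cite{convergence,knowles2013isotropic} (as the paper does) rather than from Proposition \ref{prop:local}, whose local law is stated only in a window around $L$; the needed input is the same either way.
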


Heuristically, if $\mu_{1}(H(t)) > E$, then $\Tr(\chi_{E} \ast \theta_{\eta_{2}})(H)$ is nearly $1$. By applying $K$, we change it to $0$, so that it can approximate the indicator function $\mathbf{1}(\mu_{1}(H(t)) \leq E)$. Note that we assume $K$ is smooth to ensure that the Green function comparison theorem is applicable. The precise statement of the Green function comparison theorem is as follows.

\begin{prop} \label{prop:Green}
	Let $\epsilon>0$ and set $\eta = N^{-\frac{1}{2}-\epsilon}$. Let $E_1, E_2 \in \mathbb{R}$ satisfy
	\[
	\left| E_{\ell} -L \right| \leq N^{-\frac{1}{2} + \epsilon} \quad (\ell = 1, 2).
	\]
	Let $F: \mathbb{R} \to \mathbb{R}$ be a smooth function satisfying
	\[
	\max |F^{(m)}(x)| \leq C_m (1+|x|)^C \qquad (m=0, 1, 2, 3, 4)
	\]
	for some constants $C_m>0$. Then, for any sufficiently small $\epsilon>0$, there exists $\delta>0$ such that
	\beq \label{eq:Green_function_comparison}
		\left| \E F \left( \im \int_{E_1}^{E_2} \Tr G(1, x+\ii \eta) \dd x \right)  -  \E F \left( \im \int_{E_1}^{E_2} \Tr G(0, x+\ii \eta) \dd x \right) \right| \leq N^{-\delta}.
	 \eeq
	for any sufficiently large $N$.
\end{prop}

\subsection{Proof of Theorem \ref{thm:main} - Supercritical Case} \label{subsec:main_proof}

To finish the proof, we need to compute the eigenvalues of the spike in $H(1)$. Let $\bsx^2 := (x_{1}^{2},x_{2}^{2},\dots,x_{N}^{2}).$ Note that $\bsx$ and $\bsx^2$ are not orthogonal, thus $\sqrt{\lambda_e} = \sqrt{\lambda} \E[f'_{12}]$ may not be an eigenvalue of $H(1) - V(1)$. 
\begin{prop}\label{prop:eigenvalues}
	Let $A_{N} := H(t)-V(t) = \sqrt{\lambda_e} \bsx \bsx^T + \frac{\lambda}{2} \E[ f''_{12} ] \sqrt{N} \bsx^2 (\bsx^2)^T$. Denote the ordered eigenvalues of $A_{N}$ by $\theta_{1}\geq \theta_{2}\geq\dots\geq\theta_{N}$. Then, for any $\epsilon>0$,
	\begin{enumerate}
		\item $\theta_{1} = \sqrt{\lambda_{e}} + O(N^{-1+\epsilon})$,
		\item $\theta_{2} = O(N^{-\frac{1}{2}+\epsilon})$,
		\item $\theta_{3} = \dots = \theta_{N} = 0$.
	\end{enumerate} 
\end{prop}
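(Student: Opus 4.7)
The matrix $A_N = \sqrt{\lambda_e}\,\bsx\bsx^T + c\,\bsx^2(\bsx^2)^T$, where $c := \tfrac{\lambda}{2}\E[f''_{12}]\sqrt{N}$, is a sum of two rank-one symmetric matrices and therefore has rank at most two; in particular $N-2$ of its eigenvalues vanish, which yields claim (3). To locate the two potentially nonzero eigenvalues I would factor $A_N = CDC^T$ with $C = [\bsx\;|\;\bsx^2]\in\R^{N\times 2}$ and $D = \mathrm{diag}(\sqrt{\lambda_e},c)$, and then use the standard identity that the nonzero eigenvalues of $CDC^T$ coincide with those of the $2\times 2$ matrix $C^T C D$. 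A direct computation shows that these two eigenvalues are the roots of
\beq
\lambda^2 - S\lambda + P = 0,\qquad S := \sqrt{\lambda_e}+c\|\bsx^2\|_2^2,\qquad P := \sqrt{\lambda_e}\,c\bigl(\|\bsx^2\|_2^2 - \langle\bsx,\bsx^2\rangle^2\bigr).
\eeq

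For the a priori sizes of $S$ and $P$, polynomial boundedness of $f''$ together with the finiteness of all moments of $\sqrt{N}W_{12}$ yields $|\E[f''_{12}]|=O(1)$, so $c=O(\sqrt{N})$. Assumption \ref{assump:entry} gives $\max_i|x_i|=O(N^{-1/2+\epsilon})$; combined with $\|\bsx\|_2=1$ this implies $\|\bsx^2\|_2^2 = \sum_i x_i^4 \leq (\max_i x_i^2)\cdot\|\bsx\|_2^2 = O(N^{-1+2\epsilon})$, and the assumption also directly gives $\langle\bsx,\bsx^2\rangle = \sum_i x_i^3 = O(N^{-1+\epsilon})$. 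Consequently $c\|\bsx^2\|_2^2 = O(N^{-1/2+2\epsilon})$ and $P = O(N^{-1/2+2\epsilon})$.

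Finally, I extract the eigenvalues from the quadratic. Because $P/S^2\ll 1$, the quadratic formula yields $\theta_1 = \tfrac12(S+\sqrt{S^2-4P}) = S - P/S + O(P^2/S^3)$, and the key observation is an algebraic cancellation: expanding $P/S = c(\|\bsx^2\|_2^2 - \langle\bsx,\bsx^2\rangle^2)\bigl(1 - c\|\bsx^2\|_2^2/\sqrt{\lambda_e}+\cdots\bigr)$ shows that the leading piece $c\|\bsx^2\|_2^2$ of $S$ is cancelled by the leading piece of $P/S$. What remains is
\beq
\theta_1 - \sqrt{\lambda_e} = c\langle\bsx,\bsx^2\rangle^2 + O\bigl((c\|\bsx^2\|_2^2)^2/\sqrt{\lambda_e}\bigr) = O(N^{-3/2+2\epsilon}) + O(N^{-1+4\epsilon}) = O(N^{-1+4\epsilon}),
\eeq
which after relabelling $\epsilon$ is claim (1). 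Vieta's formula then gives $\theta_2 = P/\theta_1 = c(\|\bsx^2\|_2^2 - \langle\bsx,\bsx^2\rangle^2)(1+O(N^{-1+\epsilon})) = O(N^{-1/2+2\epsilon})$, which is claim (2). The only subtle step is spotting the cancellation above; without it the crude estimates give only $\theta_1 - \sqrt{\lambda_e} = O(N^{-1/2+\epsilon})$, which is too weak for the downstream Green-function-comparison argument.
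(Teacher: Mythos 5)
Your proof is correct and follows essentially the same route as the paper's: both reduce $A_N$ to a $2\times 2$ eigenvalue problem for the rank-two part, obtain the same quadratic (your $S$ and $P$ are exactly the paper's $\|\bsu\|^2+\|\bsv\|^2$ and $\|\bsu\|^2\|\bsv\|^2-\langle\bsu,\bsv\rangle^2$), and hinge on the same cancellation of the $O(N^{-1/2+\epsilon})$ term $c\|\bsx^2\|_2^2$ between $S$ and $P/S$. Your $C^TCD$/Vieta packaging is a cosmetic variant of the paper's explicit root formula, with the minor convenience that it handles both signs of $\E[f''_{12}]$ uniformly.
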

Finally, invoking known results on the largest eigenvalues of rank-$2$ spiked Wigner matrices (e.g., \cite{fluctuation}), we can complete the proof of Theorem \ref{thm:main}.

\section{Outline of the Proof - Subcritical Case} \label{sec:proof_sub}

In this section, we outline the proof of the second part of our main result, Theorem \ref{thm:main}. (The detailed proofs for the results in Section \ref{sec:proof} can be found in Appendix \ref{app:proof_sup}.) Throughout Section \ref{sec:proof_sub}, we assume $\lambda_e < 1$.

\subsection{Approximation by a Wigner-type Matrix} 

The first step of the proof of Theorem \ref{thm:main} is the approximation of the transformed matrix $\wt M$ by a spiked random matrix $H$ in \eqref{eq:H_def} for which we can directly use Proposition \ref{prop:approx}. In the second step, however, instead of interpolating $H$ and a spiked Wigner matrix $H(0)$ in \eqref{eq:interpolation}, we directly compare $H$ and a Wigner-type matrix $V$ in \eqref{eq:def_V}. We will prove the following result on the difference between the largest eigenvalues of $H$ and $V$.

\begin{prop} \label{prop:stick}
	Let $\mu_1(H)$ and $\mu_1(V)$ be the largest eigenvalues of $H$ and $V$, respectively. Then, $0 \leq \mu_1(H) - \mu_1(V) \leq N^{-3/4}$ with overwhelming probability.
\end{prop}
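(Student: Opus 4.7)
The strategy is the classical eigenvalue--sticking argument for subcritical low-rank perturbations (cf.\ Theorem 2.7 of \cite{knowles2013isotropic}), adapted to the rank-$2$ perturbation $A_N := H - V$. By Proposition \ref{prop:eigenvalues}, I can diagonalize $A_N = \theta_1 v_1 v_1^T + \theta_2 v_2 v_2^T$ for an orthonormal pair $v_1, v_2$, with $\theta_1 = \sqrt{\lambda_e} + O(N^{-1+\epsilon})$ and $|\theta_2| = O(N^{-1/2+\epsilon})$. The subcritical assumption $\lambda_e < 1$ guarantees that $\theta_1 \leq 1 - c$ for some $c>0$ and large $N$, which is the crucial non-degeneracy that prevents any outlier from detaching from the bulk spectrum of $V$.

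The first technical step is an isotropic local law for the resolvent $R(z) := (V - zI)^{-1}$ at the upper edge $E=2$. Since $\E[V_{ij}^2] = N^{-1}(1 + O(N^{-1/2 + 2\epsilon}))$ uniformly by \eqref{eq:var_V}, the matrix $V$ is a Wigner-type matrix that is asymptotically Wigner, so essentially the same argument as in Proposition \ref{prop:local}, combined with the edge analysis of \cite{wigner-type}, yields the rigidity $\mu_1(V) = 2 + O(N^{-2/3+\epsilon})$ and the quadratic-form estimate $\langle v_i, R(z) v_j\rangle = m_{sc}(z)\delta_{ij} + o(1)$ down to spectral resolution $\eta \sim N^{-3/4}$. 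Here $v_1, v_2$ depend only on the spike $\bsx$, hence are deterministic after conditioning on $\bsx$, so the isotropic local law applies to them as test vectors.

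With these inputs, the proof proceeds via the secular equation: any eigenvalue $\mu$ of $H$ not shared with $V$ satisfies
\[
\det\!\bigl(I_2 + \Theta\, U^T R(\mu)\, U\bigr) = 0, \qquad U := [\,v_1 \mid v_2\,], \quad \Theta := \mathrm{diag}(\theta_1,\theta_2).
\]
Substituting the isotropic local law reduces this at leading order to $(1+\theta_1 m_{sc}(\mu))(1+\theta_2 m_{sc}(\mu)) \approx 0$; since $|1+\theta_1 m_{sc}(2)| = 1 - \sqrt{\lambda_e}$ is bounded away from $0$ and $|1+\theta_2 m_{sc}(2)| \to 1$, the left-hand side stays bounded away from $0$ throughout a window of size $cN^{-3/4}$ above $\mu_1(V)$, forbidding any solution there and giving the upper bound $\mu_1(H) - \mu_1(V) \leq N^{-3/4}$. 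For the matching lower bound $\mu_1(H) \geq \mu_1(V)$, I would first absorb the positive rank-$1$ piece $\theta_1 v_1 v_1^T$ into $V$ using Weyl's inequality (which can only raise the top eigenvalue), and then run the same secular-equation analysis on the residual rank-$1$ perturbation $\theta_2 v_2 v_2^T$ of norm $O(N^{-1/2+\epsilon})$ to absorb its possibly negative contribution within the same $N^{-3/4}$ window.

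The main obstacle will be calibrating the error in the isotropic local law and in the reduction of the secular equation exactly at the scale $\eta = N^{-3/4}$, which sits slightly above the natural Wigner edge resolution $N^{-2/3}$, together with simultaneously handling the weak spike $\theta_2$ whose sign is not a priori determined. A secondary subtlety is ensuring that the Wigner-type local law of \cite{wigner-type}, developed for fixed variance profiles, transfers to our $V$ whose variance profile is itself random through $\bsx$; this is handled by conditioning on $\bsx$ and taking union bounds using the overwhelming-probability nature of the local-law estimates.
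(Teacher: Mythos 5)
Your route to the upper bound is genuinely different from the paper's. You run the classical secular-equation sticking argument: diagonalize $A_N = H - V$, invoke an isotropic local law for $R$, and show that $\det(I_2 + \Theta\, U^T R(\mu) U)$ stays away from zero for all $\mu > \mu_1(V) + N^{-3/4}$ because $1 + \theta_1 m_{sc}(2) = 1 - \sqrt{\lambda_e}$ is bounded below. The paper instead argues by contradiction on imaginary parts of resolvent traces: at $z = \mu_1(V) + \xi + \ii\eta$ with $\eta = N^{-5/6}$ it shows $|\Tr R - \Tr G| = \caO(N^{2/3})$ via the resolvent identity and a bootstrap on $\langle \bsx, G\bse_i\rangle$ and $\langle \bsx^2, G\bse_i\rangle$, bounds $\im \Tr R = \caO(N^{2/3})$ by eigenvalue rigidity, and notes that an eigenvalue of $H$ in a window of width $\eta$ would force $\im\Tr G \geq N^{5/6}/2$; a union bound over $O(N^{1/3+\epsilon})$ windows plus a separate estimate $|\mu_1(H)-\mu_1(V)| \prec N^{-1/2}$ finishes. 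Your approach treats the whole half-line $\mu > \mu_1(V)+N^{-3/4}$ in one sweep and is closer to the Knowles--Yin template; its cost is that you must control $\langle v_i, R(\mu) v_j\rangle$ at \emph{real} $\mu$ within distance $N^{-3/4} \ll N^{-2/3}$ of $\mu_1(V)$, i.e., below the edge fluctuation scale, which forces you to isolate the top few eigenvalues of $V$ and use delocalization of their eigenvectors rather than a bare isotropic law. You flag this as the main obstacle but do not resolve it; the paper sidesteps it entirely by working at positive $\eta = N^{-5/6}$ and only ever bounding $\im\Tr R$ from above.

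The genuine gap is in the lower bound. The statement asserts $\mu_1(H) - \mu_1(V) \geq 0$ exactly, but your plan for the case of a negative second spike eigenvalue --- absorbing its contribution ``within the same $N^{-3/4}$ window'' --- only yields $\mu_1(H) \geq \mu_1(V) - N^{-3/4}$, which does not prove the claimed sign. To obtain the true inequality you need a quantitative domination statement, e.g.\ testing $H$ against the top eigenvector $\bsu_1$ of $V + \theta_1 v_1 v_1^T$ and showing $\theta_1\langle \bsu_1, v_1\rangle^2$ exceeds $|\theta_2|\,\langle \bsu_1, v_2\rangle^2 = O(N^{-3/2+\epsilon})$, the latter bound coming from delocalization. (The paper declares this step ``obvious from the min-max principle,'' which is only literally true when $A_N \succeq 0$, i.e., when $\E[f''_{12}]\geq 0$; so the point deserves explicit care in either treatment, but your write-up as it stands concedes a loss of $N^{-3/4}$ that the statement does not allow.)
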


To prove Proposition \ref{prop:stick}, we compare the normalized trace of the resolvents of $H$ and $V$. Recall that
\[ 
	H_{ij} = \frac{f_{ij}}{\sqrt{N}} + \sqrt{\lambda} f'_{ij} x_i x_j + \frac{\lambda}{2} \E[f'_{ij}] \sqrt{N} x_i^2 x_j^2 , \quad
	 V_{ij} = \frac{f_{ij}}{\sqrt{N}} + \sqrt{\lambda} (f'_{ij} -\E[f'_{ij}]) x_i x_j.
\]
Set
\[ 
	G(z) \equiv G(1, z) = (H-zI)^{-1}, \quad R(z) \equiv R(1, z) = (V-zI)^{-1}.
 \]
From the resolvent identity on $R-G$, we can prove that $|\Tr (R(z)-G(z))| \ll |\Tr R(z)|$ with high probability. We can then prove Proposition \ref{prop:stick} by showing that if $\mu_1(H)$ is not sufficiently close to $\mu_1(V)$, then $|\Tr G(z)| \gg |\Tr R(z)|$ for some $z$ close to $\mu_1(H)$ (but not to $\mu_1(V)$). We remark that Proposition \ref{prop:stick} is not optimal, and the bound can be lowered; however, we do not pursue it in this paper.

\subsection{Local Law and Green Function Comparison}

From Proposition \ref{prop:stick}, we can immediately see that the fluctuation of $\mu_1(H)$ is governed by that of $\mu_1(V)$, which is expected to be of order $N^{-2/3}$. To prove that the fluctuation of $\mu_1(V)$, we estimate the change of the largest eigenvalue along the matrix flow given by $V(t)$, defined in \eqref{eq:def_V(t)}. We use the Green function argument, which requires the local law, analogous to Proposition \ref{prop:local}, for the resolvent of $V(t)$. The precise statement is as follows:

\begin{prop} \label{prop:local_sub}
	Let $V(t)$ be the matrix defined in \eqref{eq:def_V(t)}. Recall that we denote the resolvent of $V(t)$ by $R(t, z)$. Set $z \equiv \tau + \ii \eta$ and $\kappa := \tau - 2$. Suppose that $|\kappa| \leq N^{-\frac{2}{3} + \epsilon}$ and $\eta = N^{-\frac{2}{3} - \epsilon}$ for a fixed $\epsilon > 0$. Then,
	\beq \label{eq:local_G_sub}
	\max_{i, j} |R_{ij}(t, z) - m_{sc}(z) \delta_{ij}| = \caO(N^{-\frac{1}{3} + \epsilon}),
	\eeq
	uniformly on $t \in [0, 1]$.
\end{prop}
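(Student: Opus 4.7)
The plan is to deduce the edge entrywise local law for $R(t,z)$ by combining two ingredients: (i) the entrywise edge local law for Wigner-type matrices, which approximates $R_{ij}(t,z)$ by $m_i(t,z)\delta_{ij}$ where $m_i(t,z)$ solves the quadratic vector equation \eqref{eq:def_m_i}, and (ii) a stability analysis of the QVE showing that $m_i(t,z) - m_{sc}(z)$ is much smaller than $N^{-1/3+\epsilon}$ uniformly in $i$ and $t$. The key structural observation is that, under Assumption \ref{assump:entry}, the variance profile
\[
s_{ij}(t) = \E[(V(t)_{ij})^2] = \frac{1 + C_1^V t\sqrt{N} x_i x_j + C_2^V t N x_i^2 x_j^2}{N}
\]
satisfies $N s_{ij}(t) = 1 + O(N^{-1/2+2\epsilon})$ uniformly, so $V(t)$ is only a tiny perturbation of a standard Wigner matrix.

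For Step (i), $V(t)$ has centered independent entries (up to symmetry), a variance profile uniformly of order $N^{-1}$, and all moments bounded by Assumption \ref{assump:mapping} together with the moment condition on $\sqrt{N} W_{ij}$. The entrywise edge local law for Wigner-type matrices from \cite{wigner-type} (and subsequent refinements by Ajanki--Erdős--Krüger) therefore applies and gives, with overwhelming probability,
\[
\max_{i,j} |R_{ij}(t,z) - m_i(t,z)\delta_{ij}| = \caO(N^{-1/3+\epsilon})
\]
on the stated spectral domain $|\kappa|\leq N^{-2/3+\epsilon}$, $\eta = N^{-2/3-\epsilon}$.

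For Step (ii), subtracting the scalar equation $-1/m_{sc}(z) = z + m_{sc}(z)$ from \eqref{eq:def_m_i} and linearizing near $m_{sc}$ yields, with $S(t) := [s_{ij}(t)]_{ij}$ and $\mathbf{1} = (1,\dots,1)^T$,
\[
(I - m_{sc}^2 S(t))(m(t,z) - m_{sc}(z)\mathbf{1}) = m_{sc}(z)^3 \bigl(S(t)\mathbf{1} - \mathbf{1}\bigr) + (\text{quadratic in } m - m_{sc}).
\]
Using $\sum_j x_j = O(N^\epsilon)$, $\sum_j x_j^2 = 1$, and $|x_i| = O(N^{-1/2+\epsilon})$ from Assumption \ref{assump:entry}, the $i$-th component of $S(t)\mathbf{1} - \mathbf{1}$ equals
\[
\frac{C_1^V t x_i}{\sqrt{N}} \sum_j x_j + C_2^V t x_i^2 \sum_j x_j^2 = O(N^{-1+2\epsilon}).
\]
At leading order $S(t) = \mathbf{1}\mathbf{1}^T/N + O(N^{-3/2+2\epsilon})$, so $I - m_{sc}^2 S(t)$ has an inverse of norm $O(1/|1-m_{sc}^2|) = O(N^{1/3-\epsilon/2})$ on its unstable direction (close to $\mathbf{1}$) and $O(1)$ elsewhere. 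Hence
\[
\max_i |m_i(t,z) - m_{sc}(z)| = O(N^{-2/3+3\epsilon}),
\]
which is far below the target accuracy.

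Combining Steps (i) and (ii) yields \eqref{eq:local_G_sub}. Uniformity in $t \in [0,1]$ follows because the constants in both steps depend only on the uniform bounds on $N s_{ij}(t)$ and on quantities controlled by Assumption \ref{assump:entry}; a polynomial grid in $(t,z)$ combined with the crude bound $\|R(t,z)\| \leq \eta^{-1}$ extends the pointwise bounds to all $(t,z)$ in the prescribed domain. The main obstacle is Step (i): at the spectral edge the stability operator of the QVE degenerates, and obtaining the sharp $N^{-1/3+\epsilon}$ accuracy at scale $\eta = N^{-2/3-\epsilon}$ requires the careful analysis of the saturated self-energy operator developed in the Wigner-type matrix literature. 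Since our variance profile is only an $N^{-1/2+2\epsilon}$ perturbation of the Wigner profile, all hypotheses of that edge local law are comfortably satisfied, so no substantial new input is needed beyond extracting the statement and verifying its uniformity in $t$.
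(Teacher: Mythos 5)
Your proposal is correct and follows essentially the same route as the paper: both invoke the entrywise edge local law of Theorem 1.7 in \cite{wigner-type} to approximate $R_{ij}(t,z)$ by $m_i(t,z)\delta_{ij}$, and then reduce to bounding $|m_i(t,z)-m_{sc}(z)|$ via the quadratic vector equation. The one place where you diverge is in that second step: the paper simply recycles the bound $|m_i - m_{sc}| = O(N^{-1+2\epsilon})$ from Lemma \ref{lem:local}, asserting it ``does not depend on $z$,'' whereas you redo the linearization at the edge and correctly track the degeneration of the stability operator, whose inverse blows up like $|1-m_{sc}^2|^{-1}\sim(\kappa+\eta)^{-1/2}$ there; this yields the weaker but still amply sufficient bound $O(N^{-2/3+3\epsilon})$. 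Since the derivation in Lemma \ref{lem:local} divides by $z+2m_{sc}=\sqrt{z^2-4}$, which is $\Theta(1)$ near the outlier but only $\Theta(N^{-1/3})$ at the edge, your more careful treatment of this step is the more defensible one.
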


With the local law in Proposition \ref{prop:local_sub}, we can prove a statement analogous to Proposition \ref{prop:Green}. By adapting the strategy explained in the beginning of Section \ref{subsec:comparison}, we can then conclude that the the limiting distribution of the fluctuation of $\mu_1(V)$ is equal to that of the largest eigenvalue of a Wigner matrix $V(0)$, which is given by the Tracy--Widom distribution.

\section{Numerical Experiments} \label{sec:numerical}
In this section, we perform numerical experiments compare the theoretical results from Theorem \ref{thm:main} with empirical results using specific models.

\subsection{Spiked Non-Gaussian Wigner Matrix with Entrywise Transformation}\label{num:nongaussian}
We first consider a transformed spiked Wigner matrix with non-Gaussian noise. The (rescaled) noise entries $\sqrt{N} W_{ij}$ ($i \leq j$) are independently drawn from the sum of Gaussian and Rademacher random variables, whose density $p$ is given by a centered bimodal distribution with unit variance. (See Figure \ref{fig:nonggraph}(a) for the graph of $p$.) For the spike, we sample a random $N$-vector $\bsx$ so that $\sqrt{N} x_i$'s are i.i.d. Rademacher random variables, independent from the noise. We apply the entrywise transformation 
\beq \label{eq:optimal_f}
f=-p'/(\sqrt{F_h} p)
\eeq
where the Fisher information $F_{h}$ of $h := -p'/p$ is defined by
\[
F_{h} := \E[h(\sqrt{N}W_{12})^{2}] = \int_{-\infty}^{\infty} \frac{(p'(x))^2}{p(x)}\, dx.
\]
(See Figure \ref{fig:nonggraph}(b) for the graph of $f$.) We remark that $f$ in \eqref{eq:optimal_f} is the optimal entrywise transformation in the sense that it maximizes the effective SNR $\lambda_e$, and it also satisfies Assumption \ref{assump:mapping}. 
\begin{figure}[htb!]
	\vskip 0.2in
	\centering
	\subfloat[The graph of the noise density $p$]{
		\includegraphics[width=0.4\textwidth]{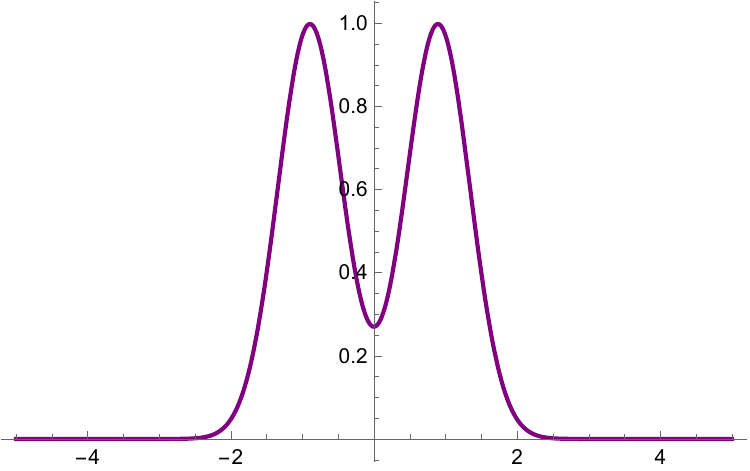}
		\label{fig:ng_denfun}
	}
	\hfill
	\subfloat[The graph of the entrywise transform $f$]{
		\includegraphics[width=0.4\textwidth]{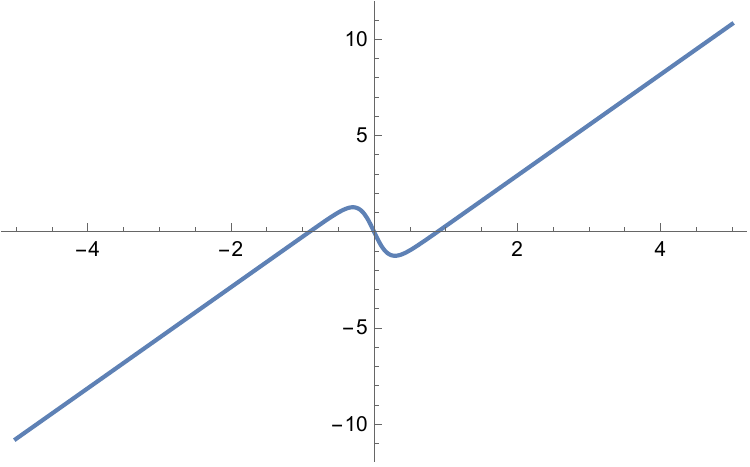}
		\label{fig:ng_transform}
	}
	\caption{The noise density and the entrywise transformation used in the numerical experiment with non-Gaussian bimodal noise.} 
	\label{fig:nonggraph}
	\vskip -0.2in
\end{figure}

We set $N = 1024$ and generate 5,000 independent transformed spiked Wigner matrices described above. For a supercritical case, we set SNR $\lambda = 0.8$ with $\lambda_e \approx 2.902$. The histogram for the shifted, rescaled largest eigenvalue $N^{1/2} (\mu_1 - (\sqrt{\lambda_e} + \frac{1}{\sqrt{\lambda_e}}))$ is shown in Figure \ref{fig:nongaussian}(a). For a subcritical case, we set SNR $\lambda = 0.1$ with $\lambda_e \approx 0.363$. The histogram for the shifted, rescaled largest eigenvalue $N^{2/3} (\mu_1 - 2)$ is shown in Figure \ref{fig:nongaussian}(b). For both cases, the empirical distributions closely match the theoretical results. See Appendix \ref{subapp:non-Gaussian} for more detail, including the precise formulas for the noise density $p$ and the entrywise transformation $f$.)

\begin{figure}[h]
	\vskip 0.2in
	\centering
	\subfloat[Supercritical case]{
		\includegraphics[width=0.4\textwidth]{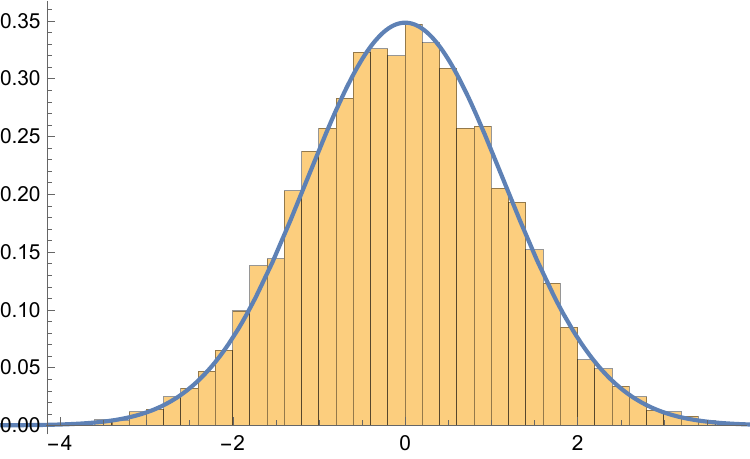}
		\label{fig:ng_sup}
	}
	\hfill
	\subfloat[Subcritical case]{
		\includegraphics[width=0.4\textwidth]{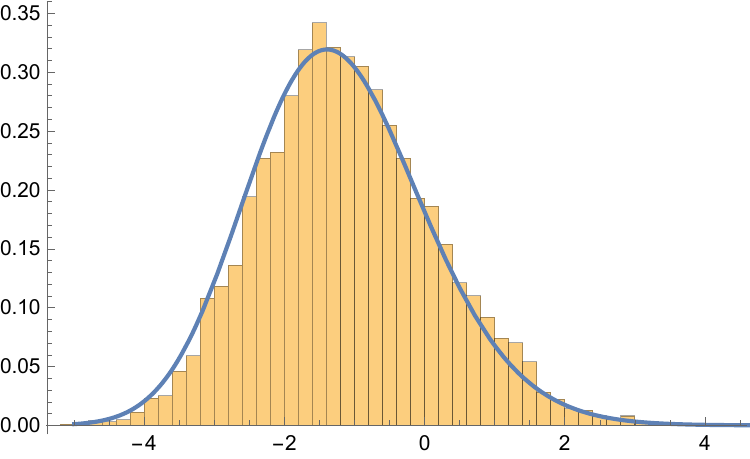}
		\label{fig:ng_sub}
	}
	\caption{The sampled distribution of $\mu_{1}(\wt M)$ for non-Gaussian noise with $N=1024$, $\lambda = 0.8$(left) and $\lambda = 0.1$(right), after shifting and rescaling. The lines in figure \ref{fig:nongaussian}(a) plots the Gaussian distribution introduced in Theorem \ref{thm:main}, and the line in figure \ref{fig:nongaussian}(b) plots the GOE Tracy--Widom distribution. }
	\label{fig:nongaussian}
	\vskip -0.2in
\end{figure}

\subsection{Spiked Gaussian Wigner Matrix with Entrywise Transformation}\label{num:gaussian}
We next consider a spiked Gaussian Wigner matrix, entrywise transformed by a polynomial. We let each noise entry be i.i.d. Gaussian and apply the mapping $f(x) = (x^{2}+3x-1)/\sqrt{11}$ entrywise. The spike is again sampled using i.i.d. Rademacher random variables.

We set $N = 1024$ and generate 5,000 independent transformed spiked Wigner matrices described above. For a supercritical case, we set SNR $\lambda = 2.5$ with $\lambda_e \approx 1.294$. The histogram for the shifted, rescaled largest eigenvalue $N^{1/2} (\mu_1 - (\sqrt{\lambda_e} + \frac{1}{\sqrt{\lambda_e}}))$ is shown in Figure \ref{fig:gaussian}(a). For a subcritical case, we set SNR $\lambda = 0.1$ with $\lambda_e \approx 0.350$. The histogram for the shifted, rescaled largest eigenvalue $N^{2/3} (\mu_1 - 2)$ is shown in Figure \ref{fig:gaussian}(b). For both cases, the empirical distributions closely match the theoretical results. See also Appendix \ref{subapp:Gaussian} for additional detail.

\begin{figure}[h]
	\vskip 0.2in
	\centering
	\subfloat[Supercritical case]{
		\includegraphics[width=0.4\textwidth]{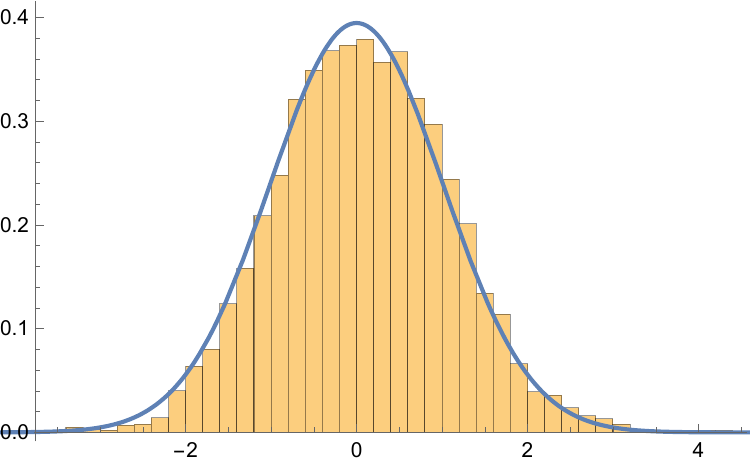}
		\label{fig:g_sup}
	}
	\hfill
	\subfloat[Subcritical case]{
		\includegraphics[width=0.4\textwidth]{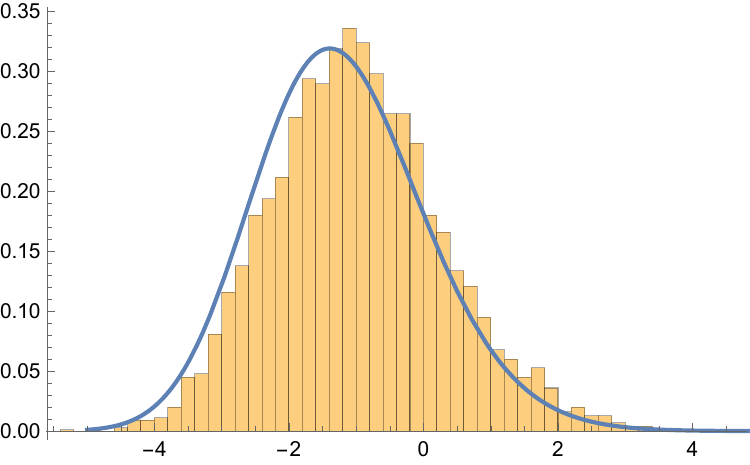}
		\label{fig:g_sub}
	}
	\caption{The sampled distribution of $\mu_{1}(\wt M)$ for Gaussian noise with $N=1024$, $\lambda = 2.5$ (left) and $\lambda = 0.15$ (right), after shifting and rescaling. The line in figure \ref{fig:gaussian}(a) plots the Gaussian distribution introduced in Theorem \ref{thm:main}, and the line in figure \ref{fig:gaussian}(b) plots the GOE Tracy--Widom distribution. }
	\label{fig:gaussian}
	\vskip -0.2in
\end{figure}

\section{Conclusion and Future Works} \label{sec:conclusion}
In this paper, we proved that the largest eigenvalue of the transformed rank-one spiked Wigner matrix exhibits the BBP transition in terms of the fluctuation of the largest eigenvalue. The limiting law of the fluctuation is given by the Gaussian when the effective SNR is above a certain threshold, and by the GOE Tracy--Widom below the threshold. We also proved the precise formulas for the limiting laws involving the effective SNR due to the entrywise transform, and strong concentration estimates for the largest eigenvalues, known as the rigidity. We conducted numerical simulations with several examples to compare the theoretical results with the numerical ones. 

A natural future research direction would be to prove the corresponding results for rectangular models, which generalize the spiked Wishart matrices. We believe that our approach would work for the analysis on the largest singular values of transformed spiked rectangular matrices. We also hope to remove several technical assumptions on our model in the future.

\section*{Acknowledgements}
The authors would like to thank anonymous reviewers for their suggestions and feedback. This work was partially supported by National Research Foundation of Korea under grant number NRF-2019R1A5A1028324 and RS-2023-NR076695.

\section*{Impact Statement}

This paper presents work whose goal is to advance the field of Machine Learning. There are many potential societal consequences of our work, none which we feel must be specifically highlighted here.



\newpage
\appendix

\section{Details of the Numerical Experiments} \label{app:numerical}

In this appendix, we provide the details of the numerical experiments in Section \ref{sec:numerical}.

\subsection{Spiked Non-Gaussian Wigner matrix} \label{subapp:non-Gaussian}
The density function $p(x)$ of each noise entry is
\beq
p(x) = \frac{5}{2\sqrt{2 \pi}}\left(e^{-\frac{5}{2}(x-2/\sqrt{5})^{2}}+e^{-\frac{5}{2}(x+2/\sqrt{5})^{2}}\right).
\eeq
If we denote by $\mathcal{N}$ and $\mathcal{R}$ a standard Gaussian random variable and a Rademacher random variable, respectively, then $p$ is the density function of a random variable
\[
\frac{1}{\sqrt{5}} \mathcal{N} + \frac{2}{\sqrt{5}}\mathcal{R}.
\]
which represents the convolution of a rescaled Gaussian density function and a rescaled Rademacher density function.

An optimal entrywise transformation (up to constant multiple) $h(x)$ for the transformed PCA is $h(x) := -\frac{p'(x)}{p(x)}$. For our model, the explicit formula for $h(x)$ is
\beq
h(x) = \frac{5\left((x-\frac{2}{\sqrt{5}})e^{-\frac{5}{2}(x-2/\sqrt{5})^{2}}+(x+\frac{2}{\sqrt{5}})e^{-\frac{5}{2}(x-+2/\sqrt{5})^{2}}\right)}{e^{-\frac{5}{2}(x-2/\sqrt{5})^{2}}+e^{-\frac{5}{2}(x+2/\sqrt{5})^{2}}}.
\eeq
Note that $h$ does not satisfy Assumption \ref{assump:mapping}, since $F_{h}:=\E[h(\sqrt{N} W_{12})^2] \approx 3.628$, which is the Fisher information of the noise entry. We rescale $h$ by dividing it by $\sqrt{F_h}$, and thus we are led to apply
\beq
f(x) := \frac{h(x)}{\sqrt{F_{h}}} \approx \frac{2.62503\left((x-\frac{2}{\sqrt{5}})e^{-\frac{5}{2}(x-2/\sqrt{5})^{2}}+(x+\frac{2}{\sqrt{5}})e^{-\frac{5}{2}(x-+2/\sqrt{5})^{2}}\right)}{e^{-\frac{5}{2}(x-2/\sqrt{5})^{2}}+e^{-\frac{5}{2}(x+2/\sqrt{5})^{2}}}.
\eeq
Note that $\E[f^{2}_{12}] = 1$ and $\E[f'_{12}] \approx 1.905$. We remark that for the transformed PCA the threshold for the strong detection is $(\E[f'_{12}])^{-2} \approx 0.276$.

\subsection{Spiked Gaussian Wigner Matrix} \label{subapp:Gaussian}

Recall that we let
\[
f(x) = \frac{x^{2}+3x-1}{\sqrt{11}}.
\]
It satisfies Assumption \ref{assump:mapping}, since
\beq
\E[f^{2}_{ij}] = \frac{m_{4}}{11} +\frac{7m_{2}}{11} -\frac{6m_{1}}{11}+ \frac{1}{11} = 1
\eeq
where $m_{\ell}$ denotes the $\ell-$th moment of the standard Gaussian distribution. (Note that $m_4=3$, $m_2=1$, and $m_1=0$.) In this setting, $\E[f'_{12}] = \frac{3}{\sqrt{11}}\approx 0.905$, and the threshold for the transformed PCA is $(\E[f'_{12}])^{-2} \approx 1.222$. We remark that the threshold cannot be decreased from $1$ when the noise is Gaussian.

\section{Proof of Main Results - Supercritical case} \label{app:proof_sup}

In this appendix, we provide the detail of the proofs of the results for the supercritical case. Throughout Appendix \ref{app:proof_sup}, we assume that $\lambda_e = \lambda (\E[f'_{12}])^2 > 1$.

In Appendices \ref{app:proof_sup} and \ref{app:proof_sub}, we will use the following notions, known as the stochastic domination in random matrix theory, which are useful when making precise estimates that hold with overwhelming probability up to small powers of $N$ .

\begin{defn}[Stochastic domination]
	Let
	\begin{equation*}
		\xi = \left( \xi^{(N)}(u) \;:\; N \in \N, u \in U^{(N)} \right) \,, \qquad
		\zeta = \left( \zeta^{(N)}(u) \;:\; N \in \N, u \in U^{(N)} \right)
	\end{equation*}
	be two families of random variables, where $U^{(N)}$ is a possibly $N$-dependent parameter set. 
	We say that $\xi$ is \emph{stochastically dominated by $\zeta$} (uniformly in $u$) if for all (small) $\epsilon > 0$ and (large) $D > 0$
	\begin{equation*}
		\sup_{u \in U^{(N)}} \P \left({|\xi^{(N)}(u)| > N^\epsilon \zeta^{(N)}(u)}\right) \;\leq\; N^{-D}
	\end{equation*}
	for any sufficiently large $N\geq N_0(\varepsilon, D)$.
	
	We write $\xi \prec \zeta$ or $\xi = \caO(\zeta)$, if $\xi$ is stochastically dominated by $\zeta$.
\end{defn}

\subsection{Proof of Theorem \ref{thm:rigidity}} \label{subapp:rigidity_sup}

We first prove Proposition \ref{prop:approx}.

\begin{proof}[Proof of Proposition \ref{prop:approx}]
	From Taylor's expansion,
	\beq \begin{split} \label{eq:taylor}
		\wt M_{ij} &= \frac{f(\sqrt{N} M_{ij})}{\sqrt{N}} = \frac{f(\sqrt{N} W_{ij} + \sqrt{\lambda N} x_i x_j)}{\sqrt{N}} \\
		&= \frac{f(\sqrt{N} W_{ij})}{\sqrt{N}} + \sqrt{\lambda} f'(\sqrt{N} W_{ij}) x_i x_j + \frac{\lambda}{2} f''(\sqrt{N} W_{ij}) \sqrt{N} x_i^2 x_j^2  + O\left( f'''(\sqrt{N} W_{ij}) N x_i^3 x_j^3 \right).
	\end{split} \eeq
	Recall the notation 
	\beq
	f_{ij} := f(\sqrt{N} W_{ij}), \quad f'_{ij} := f'(\sqrt{N} W_{ij}), \quad f''_{ij} := f''(\sqrt{N} W_{ij}), \quad f^{(k)}_{ij} := f^{(k)}(\sqrt{N} W_{ij}) \quad (k \in \mathbb{Z}^+),
	\eeq
	defined in \eqref{eq:f_short}. 
	
	Define $\wt H$ by 
	\[
	\wt H_{ij} = \frac{f_{ij}}{\sqrt{N}} + \sqrt{\lambda} f'_{ij} x_i x_j + \frac{\lambda \sqrt{N}}{2} f''_{ij} x_i^2 x_j^2.
	\]
	Then, by definition $(\wt M - \wt H)_{ij} = O\left( f'''_{ij} N x_i^3 x_j^3 \right)$. For given $\epsilon > 0$, let $\Omega_W^{\epsilon}$ be the event
	\[
	\Omega_W^{\epsilon} = \{ \max_{i, j} |f'''_{ij}| \leq N^{\epsilon} \}.
	\]
	Note that $\Omega_W^{\epsilon}$ holds with overwhelming probability. Thus, the Frobenius norm (Hilbert-Schmidt norm) of $(\wt M - \wt H)$ is bounded by
	\[
	\| \wt M - \wt H \|_F \leq C \left( \sum_{i, j} \left|f'''_{ij} N x_i^3 x_j^3 \right|^2 \right)^{\frac{1}{2}} \leq C (\max_{i, j} |f'''_{ij}|) N \sum_i |x_i|^6= \caO(N^{-1}).
	\]
	In particular, $\| \wt M - \wt H \| = \caO(N^{-1})$.
	
	We next consider $(H - \wt H)$. Note that
	\[
	(H - \wt H)_{ij} = \frac{\lambda \sqrt{N}}{2} (f''_{ij} - \E[f''_{ij}]) x_i^2 x_j^2.
	\]
	Thus, for any unit vector $v = (v_1, v_2, \dots, v_N) \in \mathbb{R}^N$,
	\[
	\langle v, (H - \wt H)v \rangle = \sum_{i, j} \frac{\lambda \sqrt{N}}{2} (f''_{ij} - \E[f''_{ij}]) x_i^2 x_j^2 v_i v_j.
	\]
	If we let $F''$ be a matrix defined by $F''_{ij} = N^{-\frac{1}{2}} (f''_{ij} - \E[f''_{ij}])$, then it is a constant multiple of a Wigner matrix. Since the matrix norm of a Wigner matrix is bounded by a constant with overwhelming probability, by considering the vector whose $i$-th entry is $x_i^2 v_i$, we obtain that
	\[
	\sum_{i, j} \frac{\lambda \sqrt{N}}{2} (f''_{ij} - \E[f''_{ij}]) x_i^2 x_j^2 v_i v_j = \caO \left( N \sum_i x_i^4 v_i^2 \right) = \caO(N^{-1}).
	\]
	Since $v$ was an arbitrary unit vector, we find that $\| H - \wt H \| = \caO(N^{-1})$.
	
	So far, we have shown that $\| \wt M - H \| = \caO(N^{-1})$. Then, applying Weyl's inequality (or the min-max principle for the largest eigenvalue), we conclude that $\mu_1(\wt M)-\mu_1(H) = \caO(N^{-1})$.
\end{proof}

Next, we prove Theorem \ref{thm:rigidity} for the supercritical case by adapting the strategy of the proof of Theorem 4.8 in \cite{perry}.
\begin{proof}[Proof of Theorem \ref{thm:rigidity} - Supercritical case]
	From Proposition \ref{prop:approx}, we find it suffices to prove that
	\[
	\left| \mu_1(H) -  \Bigl( \sqrt{\lambda_e} + \frac{1}{\sqrt{\lambda_e}} \Bigr) \right| \prec N^{-\frac{1}{2}}.
	\]
	Let $A$ and $\Delta$ be the matrices defined by
	\[
	A_{ij} = \frac{\sqrt{\lambda}}{N} (f'_{ij} -\E[f'_{ij}]), \qquad  \Delta_{ij} = \sqrt{N} A_{ij} x_i x_j.
	\]
	Then, for any unit vector $\bsy = (y_1, \dots, y_N)$, letting $\bsz = (x_1 y_1, \dots, x_N y_N)$,
	\[
	\langle \bsy, \Delta \bsy \rangle = \sqrt{N} \sum_{i,j=1}^N x_i y_i A_{ij} x_j y_j = \sqrt{N} \langle \bsz, A \bsz \rangle \leq \sqrt{N} \| A \| \cdot \| \bsz \|^2.
	\]
	Since $|x_i| \prec N^{-1/2}$, we have that $\| \bsz \| \prec N^{-1/2} \| \bsy \| = N^{-1/2}$. Further, since $A$ is a constant multiple of a Wigner matrix, we find that $\| A \| \prec 1$. Thus, we have $\langle \bsy, \Delta \bsy \rangle \prec N^{-1/2}$. Since $\bsy$ was arbitrary, it shows that $\| \Delta \| \prec N^{-1/2}$.
	
	We now consider $(H-\Delta)$, whose entries are
	\[
	H_{ij} - \Delta_{ij} = \frac{f_{ij}}{\sqrt{N}} + \sqrt{\lambda} \E[f'_{ij}] x_i x_j + \frac{\lambda}{2} \E[f''_{ij}] \sqrt{N} x_i^2 x_j^2.
	\]
	Note that $(H-\Delta)$ is a rank-$2$ spiked Wigner matrix. We want to invoke Theorem 2.7 in \cite{knowles2013isotropic}, which can be rephrased as follows in our setting.
	\begin{prop}[Theorem 2.7 in \cite{knowles2013isotropic}] \label{prop:rigidity}
		Let $M = W + \sqrt{\lambda_1} \bsx \bsx^T + \sqrt{\lambda_2} \bsy \bsy^T$, where $W$ is a Wigner matrix and $\bsx = (x_1, \dots, x_N), \bsy = (y_1, \dots, y_N) \in \R$ with $\langle \bsx, \bsy \rangle = 0$. Assume further that $0 < \lambda_2 < 1 < \lambda_1$ and $\E[W_{ii}]^2 = 2N^{-1}$ for any $i$. Then, for the largest eigenvalue $\mu_1(M)$ of $M$,
		\[
		\left| \mu_1(M) -  \Bigl( \sqrt{\lambda_1} + \frac{1}{\sqrt{\lambda_1}} \Bigr) \right| \prec N^{-\frac{1}{2}}.
		\]
	\end{prop}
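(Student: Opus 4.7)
The plan is to reduce the problem to the analysis of a $2 \times 2$ secular equation and then exploit an isotropic local law for the Wigner resolvent. Write $M = W + U D U^T$ with $U = [\bsx, \bsy] \in \R^{N \times 2}$ and $D = \mathrm{diag}(\sqrt{\lambda_1}, \sqrt{\lambda_2})$, and set $G(z) = (W - zI)^{-1}$. By the matrix determinant lemma, any $z$ outside the spectrum of $W$ is an eigenvalue of $M$ if and only if $\det(I_2 + D\, U^T G(z) U) = 0$. The candidate outlier $\mu_1(M)$ is the largest real root of this scalar equation lying above the edge of the bulk.

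To estimate $U^T G(z) U$, I would invoke the isotropic local law from \cite{knowles2013isotropic}, which gives, for any deterministic unit vectors $\bsv, \bsw$ and $z$ in a small neighborhood of $L_1 := \sqrt{\lambda_1} + 1/\sqrt{\lambda_1}$,
\[
\langle \bsv, G(z) \bsw \rangle = m_{sc}(z) \langle \bsv, \bsw \rangle + \caO(N^{-1/2}).
\]
Combined with $\langle \bsx, \bsy \rangle = 0$, this yields $U^T G(z) U = m_{sc}(z) I_2 + \caO(N^{-1/2})$, and the secular equation reduces to
\[
\bigl(1 + \sqrt{\lambda_1}\, m_{sc}(z)\bigr)\bigl(1 + \sqrt{\lambda_2}\, m_{sc}(z)\bigr) = \caO(N^{-1/2}).
\]
Since $\lambda_2 < 1$, the second factor is uniformly bounded away from zero near $L_1$: a zero would require $m_{sc}(z) = -1/\sqrt{\lambda_2} < -1$, which never occurs on the real axis outside $[-2,2]$. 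Hence any root near $L_1$ must satisfy $1 + \sqrt{\lambda_1}\, m_{sc}(z) = \caO(N^{-1/2})$, and since $m_{sc}$ is analytic with nonvanishing derivative at $L_1$, the implicit function theorem produces a unique root $\mu_*$ in a shrinking neighborhood of $L_1$ obeying $|\mu_* - L_1| \prec N^{-1/2}$.

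It remains to identify $\mu_*$ with $\mu_1(M)$. For this I would combine rigidity of the Wigner edge (the largest eigenvalue of $W$ is at most $2 + N^{-2/3+\epsilon}$ with overwhelming probability) with Cauchy interlacing for rank-$2$ perturbations and Weyl's inequality, which together rule out competing roots of the secular equation above $L_1 + N^{-1/2+\epsilon}$ or between the bulk edge and $L_1 - N^{-1/2+\epsilon}$. The main obstacle is establishing the isotropic local law uniformly in $z$ at the required $N^{-1/2+\epsilon}$ scale above the edge, since this demands resolvent control just below the order of the fluctuation; a grid-plus-continuity argument together with the entrywise local law down to scale $N^{-1+\epsilon}$ is needed. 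Fortunately, \cite{knowles2013isotropic} proves such uniform isotropic estimates in full generality, so invoking their Theorem 2.7 directly suffices under the stated diagonal normalization $\E[W_{ii}^2] = 2N^{-1}$.
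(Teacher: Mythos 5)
Your proposal is correct, and it is consistent with what the paper does: the paper offers no proof of this proposition at all, stating it as a direct rephrasing of Theorem 2.7 in \cite{knowles2013isotropic} and invoking it as a black box. Your sketch is a faithful reconstruction of the standard argument behind that theorem: reduce to the secular equation $\det(I_2 + D\,U^T G(z) U)=0$ via the matrix determinant lemma, use the isotropic local law to replace $U^T G(z)U$ by $m_{sc}(z)I_2+\caO(N^{-1/2})$ (where orthogonality $\langle \bsx,\bsy\rangle=0$ kills the off-diagonal entries to leading order), observe that $1+\sqrt{\lambda_2}\,m_{sc}(z)$ cannot vanish for real $z>2$ since $m_{sc}>-1$ there while $\lambda_2<1$ forces $-1/\sqrt{\lambda_2}<-1$, and then solve $1+\sqrt{\lambda_1}\,m_{sc}(z)=\caO(N^{-1/2})$ near $L_1$ using $m_{sc}'(L_1)=1/(\lambda_1-1)\neq 0$. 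You also correctly flag the two genuinely delicate points — uniformity of the isotropic estimate on the whole region above the spectral edge (where the error deteriorates as $z\downarrow 2$) and the eigenvalue-counting step identifying the secular root with $\mu_1(M)$ — both of which are exactly what Knowles--Yin supply. Since your final step, like the paper's, is to invoke their Theorem 2.7, there is no substantive divergence; your write-up simply makes explicit the mechanism the paper leaves implicit. One cosmetic remark: the hypothesis should read $\E[W_{ii}^2]=2N^{-1}$ (the paper's $\E[W_{ii}]^2$ is a typo), and this normalization plays no role at the $N^{-1/2}$ concentration scale — it only matters for the limiting Gaussian fluctuation used later.
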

	Since the variance of the diagonal entries of $(H-\Delta)$ is not $2N^{-1}$ but $N^{-1}$, we introduce a diagonal matrix $D$ with entries
	\[
	D_{ii} = (\sqrt{2}-1) \frac{f_{ii}}{\sqrt{N}}.
	\]
	From the first part of Definition \ref{defn:Wigner} (the definition of the Wigner matrix), we can easily find that $|f_{ii}| \prec 1$. Thus, $\| D \| \prec N^{-1/2}$. Now, if we let $\mu_1(H-\Delta+D)$ be the largest eigenvalue of $(H-\Delta+D)$, then from Proposition \ref{prop:rigidity}, we find that
	\[
	\left| \mu_1(H-\Delta+D) -  \Bigl( \sqrt{\lambda_e} + \frac{1}{\sqrt{\lambda_e}} \Bigr) \right| \prec N^{-\frac{1}{2}}.
	\]
	Since 
	\beq
	| \mu_1(H) - \mu_1(H-\Delta+D) | \leq \| \Delta - D \| \leq \| \Delta \| + \| D \| \prec N^{-\frac{1}{2}},
	\eeq
	this concludes the proof of Theorem \ref{thm:rigidity} for the supercritical case.
	
\end{proof}

\subsection{Proof of the Local Law and the Green Function Comparison Theorem} \label{subapp:local_sup}

The proof of the local law, Proposition \ref{prop:local}, relies on the following lemma, which is the local law for the resolvent of $V(t)$.

\begin{lem} \label{lem:local}
	Let $V(t)$ be the matrix defined in \eqref{eq:def_V(t)} and its resolvent $R(t, z)$ in \eqref{eq:resolvent_R}. Then,
	\beq \label{eq:local_R}
	\max_{i, j} |R_{ij}(t, z) - m_{sc}(z) \delta_{ij}| = \caO(N^{-\frac{1}{2} + 2\epsilon}),
	\eeq
	where we assume that the conditions in Proposition \ref{prop:local} hold for $z \equiv \tau + \ii \eta$.
\end{lem}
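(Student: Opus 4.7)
The plan is to establish the lemma in two stages: first, invoke the known entrywise local law for general Wigner-type matrices of Ajanki, Erd\H{o}s, and Kr\"uger to approximate $R_{ij}(t,z)$ by $m_i(t,z)\delta_{ij}$, and second, compare $m_i(t,z)$ with the scalar Stieltjes transform $m_{sc}(z)$ via a perturbation analysis of the quadratic vector equation \eqref{eq:def_m_i}. Combining the two estimates by the triangle inequality then yields the claim.

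For the first stage, $V(t)$ is a real symmetric matrix with centered, independent (up to symmetry) entries, variance profile
\[
s_{ij}(t) := \E[(V(t)_{ij})^2] = \frac{1 + C_1^V t \sqrt{N}\,x_i x_j + C_2^V t N x_i^2 x_j^2}{N},
\]
and polynomially controlled moments via the polynomial growth of $f$ together with Assumption \ref{assump:entry}. Since $L = \sqrt{\lambda_e} + 1/\sqrt{\lambda_e} > 2$ when $\lambda_e > 1$ and $\tau$ lies within $N^{-1/2+\epsilon}$ of $L$, the spectral parameter $z = \tau + \ii\eta$ sits at a macroscopic, $N$-independent, distance from the semicircle support $[-2,2]$. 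In this exterior regime the Wigner-type local law yields
\[
\max_{i,j}|R_{ij}(t,z) - m_i(t,z)\delta_{ij}| \prec \sqrt{\frac{\im m_i(t,z)}{N\eta}} + \frac{1}{N\eta} = \caO(N^{-1/2+\epsilon}),
\]
uniformly in $t \in [0,1]$, using that $\im m_i(t,z) \sim \eta$ outside the spectrum, so that the $1/(N\eta)$ contribution is dominant.

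For the second stage, subtracting the scalar identity $-m_{sc}^{-1} = z + m_{sc}$ from the QVE $-m_i^{-1} = z + \sum_j s_{ij}(t) m_j$ and rearranging gives
\[
\frac{m_i(t,z) - m_{sc}(z)}{m_i(t,z)\, m_{sc}(z)} = \sum_j s_{ij}(t)\big(m_j(t,z) - m_{sc}(z)\big) + m_{sc}(z)\Big(\sum_j s_{ij}(t) - 1\Big).
\]
The inhomogeneity is controlled using $\sum_j x_j = O(N^\epsilon)$ and $\sum_j x_j^2 = 1$:
\[
\Big|\sum_j s_{ij}(t) - 1\Big| = \Big|\frac{C_1^V t}{\sqrt{N}} x_i \sum_j x_j + C_2^V t\, x_i^2 \sum_j x_j^2\Big| = O(N^{-1+2\epsilon}).
\]
Because $z$ is at positive distance from $[-2,2]$ we have $|m_{sc}(z)| \leq 1 - c$ for some fixed $c > 0$, while the operator $S(t)\bsv := \big(\sum_j s_{ij}(t) v_j\big)_i$ is bounded on $\ell^\infty$ with operator norm $1 + O(N^{-1+2\epsilon})$ via the row-sum estimate above. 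Consequently the stability operator $I - m_{sc}(z)^2\, S(t)$ is invertible with uniformly bounded inverse, and inverting it in $\ell^\infty$ gives $\max_i |m_i(t,z) - m_{sc}(z)| = O(N^{-1+2\epsilon})$. Combined with the stage-one bound this proves the lemma, even with a stronger exponent than stated.

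The main technical obstacle is checking uniform invertibility of the stability operator along the entire interpolation path $t\in[0,1]$ and across the prescribed disc in $z$; this reduces to keeping the spectral radius of $m_{sc}(z)^2 S(t)$ strictly below $1$, which is guaranteed by the $O(1)$ edge gap $L - 2 > 0$ and the $O(N^{-1+2\epsilon})$ smallness of $S(t) - S(0)$ in operator norm. All remaining ingredients — the polynomial moment control, the estimate $\im m_i(t,z)\sim\eta$ outside the spectrum, positivity of $s_{ij}(t)$ for large $N$, and continuity in $t$ of the local-law error — are routine.
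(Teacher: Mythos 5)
Your proposal is correct and follows essentially the same two-stage route as the paper: invoke the entrywise local law of Ajanki--Erd\H{o}s--Kr\"uger for Wigner-type matrices to reduce the claim to comparing $m_i(t,z)$ with $m_{sc}(z)$, and then extract $\max_i|m_i(t,z)-m_{sc}(z)| = O(N^{-1+2\epsilon})$ from the quadratic vector equation using $\sum_j x_j = O(N^{\epsilon})$ and $\max_i |x_i| = O(N^{-1/2+\epsilon})$. The only difference is in the second stage, where you invert the linearized stability operator $I - m_{sc}(z)^2 S(t)$ on $\ell^\infty$ (exploiting $|m_{sc}(z)| \leq 1-c$ away from the spectral edge), whereas the paper runs a two-step bootstrap on the ansatz $m_i = m_{sc} + s_i$, first averaging over $i$ and then feeding the averaged bound back into the pointwise equation; both give the same estimate, though your linearization replaces $m_i m_{sc}$ by $m_{sc}^2$ and therefore tacitly uses an a priori bound that $m_i$ is uniformly close to $m_{sc}$, which deserves a one-line justification.
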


\begin{proof}
	Recall the quadratic vector equation \eqref{eq:def_m_i}. In the regime considered in this lemma, we have an estimate for the resolvent $R(t, z)$ that
	\beq \label{eq:local_R_i}
	\max_{i, j} |R_{ij}(t, z) - m_i(t, z) \delta_{ij}| = \caO(N^{-\frac{1}{2} + \epsilon}).
	\eeq
	(See Theorem 1.7 in \cite{wigner-type}, where the parameters are given as $\kappa(z) = \Theta(1)$ and $\rho(z) = \Theta(\eta)$ from (1.17), (1.23), and (4.5f) in \cite{wigner-type}.) 
	For the solution $m_i \equiv m_i(t, z)$ of \eqref{eq:def_m_i}, we can consider an ansatz
	\[
	m_i(t, z) = m_{sc}(z) + s_i(t, z)
	\]
	with $s_i \equiv s_i(t, z) \ll m_{sc}$. We then have from \eqref{eq:def_m_i} that
	\[
	0 = 1 + z m_i + \sum_j \E[(V(t)_{ij})^2] m_i m_j = 1 + z (m_{sc} + s_i) + \sum_j \E[(V(t)_{ij})^2] (m_{sc} + s_i)(m_{sc} + s_j).
	\]
	From \eqref{eq:def_V(t)}, applying the identity $1 + zm_{sc} + m_{sc}^2 = 0$, we further have that
	\[
	z s_i -m_{sc}^2 + \frac{1}{N} (m_{sc} + s_i) \sum_j ( 1 + C_1^V t \sqrt{N} x_i x_j + C_2^V t N x_i^2 x_j^2) (m_{sc} + s_j) = 0,
	\]
	which yields
	\beq \label{eq:m_i_ansatz1}
	(z+m_{sc}) s_i + \frac{m_{sc} +s_i}{N} \sum_j s_j + (m_{sc} + s_i) \frac{C_1^V t}{\sqrt{N}} \sum_j (m_{sc} + s_j) x_i x_j = O(N^{-1+2\epsilon}).
	\eeq
	Summing the left-hand side of \eqref{eq:m_i_ansatz1} over the index $i$ and dividing it by $N$, 
	\beq \label{eq:m_i_ansatz2}
	(z+2m_{sc}) \left( \frac{1}{N} \sum_i s_i \right) + \left( \frac{1}{N} \sum_i s_i \right)^2 + \frac{C_1^V t}{N\sqrt{N}} \left( \sum_i (m_{sc} + s_i) x_i \right)^2 = O(N^{-1+2\epsilon}).
	\eeq
	Since $|x_i| = O(N^{-\frac{1}{2}+\epsilon})$, by naive power counting, the last term in the left-hand side of \eqref{eq:m_i_ansatz2} is $O(N^{-\frac{1}{2}+2\epsilon})$. Thus, $\frac{1}{N} \sum_i s_i = O(N^{-\frac{1}{2}+2\epsilon})$. Plugging it into \eqref{eq:m_i_ansatz1}, since the last term in the left-hand side of \eqref{eq:m_i_ansatz2} is $O(N^{-\frac{1}{2}+2\epsilon})$, we find that $s_i = O(N^{-\frac{1}{2}+2\epsilon})$ as well. With this bound on $s_i$, we then bound the last term in the left-hand side of \eqref{eq:m_i_ansatz2} by
	\[
	\frac{C_1^V t}{N\sqrt{N}} \left( m_{sc} \sum_i x_i + \sum_i s_i x_i \right)^2 = O(N^{-\frac{3}{2}+6\epsilon}),
	\]
	where we used the assumption that $\sum_i x_i = O(N^{\epsilon})$. Thus, from \eqref{eq:m_i_ansatz2}, we find that $\frac{1}{N} \sum_i s_i = O(N^{-1+2\epsilon})$, and plugging it again into \eqref{eq:m_i_ansatz1}, we conclude that $s_i = O(N^{-1+2\epsilon})$. 
	
	Combining the estimate $s_i = O(N^{-1+2\epsilon})$ with the local law \eqref{eq:local_R_i}, we now obtain \eqref{eq:local_R}.
\end{proof}

\begin{rem} \label{rem:local}
	The local law in \eqref{eq:local_R} can be strengthened further; we will use the averaged local law
	\beq \label{eq:ave_local_R}
	\left| \frac{1}{N} \sum_{i=1}^N \overline{w}_i (R_{ii}(t, z) - m_i(t, z)) \right| = \caO(N^{-1+2\epsilon})
	\eeq
	for any deterministic vector $(w_1, w_2, \dots, w_N)$ with $\max_i |w_i| \leq 1$ and the anisotropic local law
	\beq \label{eq:an_local_R}
	\left| \sum_{i, j=1}^N \overline{w}_i R_{ij}(t, z) v_j - \sum_{i=1}^N m_i(t, z) \overline{w}_i v_i \right| = \caO(N^{-\frac{1}{2} + 2\epsilon})
	\eeq
	for any deterministic unit vectors $(w_1, w_2, \dots, w_N)$ and $(v_1, v_2, \dots, v_N)$.
	(See Theorem 1.7 and Theorem 1.13 of \cite{wigner-type}, respectively.) With the estimate $s_i = m_i - m_{sc} = O(N^{-1+2\epsilon})$ that we showed in the proof of Lemma \ref{lem:local}, we find that the local laws in \eqref{eq:local_R}, \eqref{eq:ave_local_R}, and \eqref{eq:an_local_R} remain valid even if we replace $m_i$ by $m_{sc}$.
\end{rem}

With the local laws for $R(t, z)$, we now prove the local law for $G(t, z)$, Proposition \ref{prop:local}.
\begin{proof}[Proof of Proposition \ref{prop:local}]
	We apply the resolvent identity
	\beq \label{eq:resolvent_identity}
	R(t, z) - G(t, z) = R(t, z) (H(t) - V(t)) G(t, z), 
	\eeq
	which can be easily checked by multiplying the identity by $(H(t) - zI)$ from the right and by $(V(t) - zI)$ from the left. 
	Let $\bsx^2 := (x_1^2, x_2^2, \dots, x_N^2)$. Note that $\| \bsx^2 \| \leq (\max_i |x_i|) \| \bsx \| = O(N^{-\frac{1}{2}+\epsilon'})$ for any $\epsilon' > 0.$ Then,
	\[
	H(t) - V(t) = \sqrt{\lambda} \E[f'_{12}] \bsx \bsx^T + \frac{\lambda}{2} \E[ f''_{12} ] \sqrt{N} \bsx^2 (\bsx^2)^T.
	\]
	Set $R \equiv R(t, z)$, $G \equiv G(t, z)$, and let $\bse_i$ be the $i$-th standard basis vector. Then,
	\beq \label{eq:RG_diff}
	R_{ij} - G_{ij} = \langle \bse_i, (R-G) \bse_j \rangle = \sqrt{\lambda} \E[f'_{12}] \langle \bse_i, R \bsx \rangle  \langle \bsx, G \bse_j \rangle + \frac{\lambda}{2} \E[ f''_{12} ] \sqrt{N} \langle \bse_i, R \bsx^2 \rangle \langle \bsx^2, G \bse_j \rangle.
	\eeq
	We want to show that the right-hand side of \eqref{eq:RG_diff} is negligible. First, by applying the anisotropic local law in \eqref{eq:an_local_R}, we find
	\beq \label{eq:R_estimate}
	\langle \bse_i, R \bsx \rangle = \caO(N^{-\frac{1}{2}+2\epsilon}), \quad \langle \bse_i, R \bsx^2 \rangle = \caO(N^{-1+2\epsilon}). 
	\eeq
	To bound the second term in the right-hand side of \eqref{eq:RG_diff}, it suffices to use the trivial estimate $\| G \| \leq \eta^{-1} = N^{\frac{1}{2}+\epsilon}$, which can be readily checked from an inequality $|(\mu_i(H(t)) - z)^{-1}| \leq |\im z|^{-1} = \eta^{-1}$, where $\mu_i(H(t))$ $(i=1, 2, \dots, N)$ are the eigenvalues of $H(t)$. We then find that
	\beq \label{eq:x^2_estimate}
	\frac{\lambda}{2} \E[ f''_{12} ] \sqrt{N} \langle \bse_i, R \bsx^2 \rangle \langle \bsx^2, G \bse_j \rangle = \caO(N^{\frac{1}{2}} N^{-1+2\epsilon} \eta^{-1} \| \bsx^2 \|) = \caO(N^{-\frac{1}{2} + 3\epsilon}).
	\eeq
	
	We now estimate $\langle \bsx, G \bse_j \rangle$. From the resolvent identity \eqref{eq:resolvent_identity}, 
	\[
	\langle \bsx, R \bse_j \rangle - \langle \bsx, G \bse_j \rangle = \sqrt{\lambda} \E[f'_{12}] \langle \bsx, R \bsx \rangle  \langle \bsx, G \bse_j \rangle + \frac{\lambda}{2} \E[ f''_{12} ] \sqrt{N} \langle \bsx, R \bsx^2 \rangle \langle \bsx^2, G \bse_j \rangle.
	\]
	Solving it for $\langle \bsx, G \bse_j \rangle$, we get
	\beq \label{eq:Ge_j expansion}
	\langle \bsx, G \bse_j \rangle = \frac{1}{1+ \sqrt{\lambda} \E[f'_{12}] \langle \bsx, R \bsx \rangle} \left( \langle \bsx, R \bse_j \rangle - \frac{\lambda}{2} \E[ f''_{12} ] \sqrt{N} \langle \bsx, R \bsx^2 \rangle \langle \bsx^2, G \bse_j \rangle \right).
	\eeq
	It should be noted that for our choice of $z$, from the anisotropic local law, $\langle \bsx, R \bsx \rangle$ is near $-1/(\sqrt{\lambda} \E[f'_{12}])$. To obtain an estimate on $\langle \bsx, G \bse_j \rangle$, we consider the imaginary part of $\langle \bsx, R \bsx \rangle$. From the spectral decomposition,
	\[
	R = \sum_k \frac{\bsv_k(t) \bsv_k(t)^T}{\mu_k(V(t)) - z},
	\]
	where $\bsv_k(t)$ is the normalized eigenvector of $V(t)$ associated with the eigenvalue $\mu_k(V(t))$. Since $V(t)$ is a Wigner-type matrix with $\|V(t) - V(0) \| = o(1)$, it is not hard to see that $\mu_k(V(t)) \geq -3$ for all $k = 1, 2, \dots, N$ with overwhelming probability. Thus,
	\[ \begin{split}
		\im \langle \bsx, R \bsx \rangle &= \im \sum_k \frac{\bsx^T \bsv_k(t) \bsv_k(t)^T \bsx}{\mu_k(V(t)) - z} = \sum_k | \langle \bsx, \bsv_k(t) \rangle|^2 \im \frac{1}{\mu_k(V(t)) - z} \\
		&= \sum_k | \langle \bsx, \bsv_k(t) \rangle|^2 \frac{\eta}{(\mu_k(V(t)) - \tau)^2 + \eta^2} \geq \frac{\eta}{(\tau+4)^2} \sum_k | \langle \bsx, \bsv_k(t) \rangle|^2 = \frac{\eta}{(\tau+4)^2}
	\end{split} \]
	with high probability, since $\{ \bsv_k(V(t)) \}$ forms an eigenbasis. We now have that
	\beq \label{eq:im_lower_bound}
	|1+ \sqrt{\lambda} \E[f'_{12}] \langle \bsx, R \bsx \rangle| \geq \sqrt{\lambda} \E[f'_{12}] \im \langle \bsx, R \bsx \rangle \geq C \eta
	\eeq
	for some constant $C$ with high probability. Furthermore, from the anisotropic local law in \eqref{eq:an_local_R}, we find that
	\[ \begin{split}
		\langle \bsx, R \bsx^2 \rangle &= \sum_{i=1}^N m_i x_i^3 + \caO(N^{-1 + 2\epsilon}) = m_{sc} \sum_{i=1}^N x_i^3 + \sum_{i=1}^N (m_i - m_{sc}) x_i^3 + \caO(N^{-1 + 2\epsilon}) \\
		&= \caO(N^{-1 + \epsilon'} + N^{-\frac{3}{2} + 5\epsilon} + N^{-1 + 2\epsilon}).
	\end{split} \]
	Thus, following the argument we used to derive \eqref{eq:x^2_estimate}, we obtain
	\beq \label{eq:x^3_estimate}
	\frac{\lambda}{2} \E[ f''_{12} ] \sqrt{N} \langle \bsx, R \bsx^2 \rangle \langle \bsx^2, G \bse_j \rangle = \caO(N^{-\frac{1}{2} + 3\epsilon}).
	\eeq
	Plugging \eqref{eq:im_lower_bound} and \eqref{eq:x^3_estimate} into \eqref{eq:Ge_j expansion}, together with \eqref{eq:R_estimate}, we get
	\[
	\langle \bsx, G \bse_j \rangle = \caO(\eta^{-1} N^{-\frac{1}{2} + 3\epsilon}) = \caO(N^{4\epsilon}).
	\]
	Finally, going back to the resolvent identity \eqref{eq:RG_diff}, from \eqref{eq:local_R}, \eqref{eq:R_estimate}, and \eqref{eq:x^2_estimate}, we conclude that \eqref{eq:local_G} holds. This completes the proof of Proposition \ref{prop:local}.
\end{proof}

Next, we prove the Green function comparison theorem, Proposition \ref{prop:Green}.

\begin{proof}[Proof of Proposition \ref{prop:Green}]
	Fix $x \in [E_1, E_2]$. For simplicity, set
	\[
	G \equiv G(t, x+\ii \eta), \quad X \equiv \im \int_{E_1}^{E_2} \Tr G(t, x+\ii \eta) \dd x.
	\]
	From our choice of $x$ and $\eta$, and the assumption on $F$, we have from the local law for $G$, Proposition \ref{prop:local}, that
	\[
	X = \caO(N^{C\epsilon}), \quad F^{(m)}(X) = \caO(N^{C\epsilon})
	\]
	for some constant $C$. (Throughout the proof, we use $C$ to denote positive constants independent of $N$, whose value may change from line to line.) Differentiating $F(X)$ with respect to $t$,
	\beq \begin{split}
		\frac{\dd}{\dd t} \E F(X) &= \E \left[ F'(X) \frac{\dd X}{\dd t} \right] = \E \left[ F'(X) \im \int_{E_1}^{E_2} \sum_i \frac{\dd G_{ii}}{\dd t} \dd x \right] \\
		&= \E \left[ F'(X) \im \int_{E_1}^{E_2} \sum_i \sum_{j \leq k} \frac{\dd H(t)_{jk}}{\dd t} \frac{\partial G_{ii}}{\partial H(t)_{jk}} \dd x \right].
	\end{split} \eeq
	From the definition of $H(t)$ in \eqref{eq:interpolation} and \eqref{eq:def_V(t)},
	\[ \begin{split}
		\dot H(t)_{jk} := \frac{\dd H(t)_{jk}}{\dd t} &= \frac{C_1^V \sqrt{N} x_j x_k + C_2^V N x_j^2 x_k^2}{2( 1 + C_1^V t \sqrt{N} x_j x_k + C_2^V t N x_j^2 x_k^2)^{1/2}} \frac{V_{jk}}{\sqrt{N \E[(V_{jk})^2]}} \\
		&= \frac{C_1^V \sqrt{N} x_j x_k + C_2^V N x_j^2 x_k^2}{2(1 + C_1^V t \sqrt{N} x_j x_k + C_2^V t N x_j^2 x_k^2)} V(t)_{jk}.
	\end{split} \]
	Applying the well-known formula for the derivative of the resolvent,
	\[
	\frac{\partial G_{ab}}{\partial H(t)_{jk}} =
	\begin{cases}
		- G_{aj} G_{kb} - G_{ak} G{jb} & \text{ if } j \neq k \\
		- G_{aj} G_{jb} & \text{ if } j=k
	\end{cases},
	\]
	we find that
	\beq \begin{split} \label{eq:F_derivative}
		\frac{\dd}{\dd t} \E F(X) = -\im \int_{E_1}^{E_2} \E \left[ F'(X) \sum_{i, j, k} \frac{C_1^V \sqrt{N} x_j x_k + C_2^V N x_j^2 x_k^2}{2(1 + C_1^V t \sqrt{N} x_j x_k + C_2^V t N x_j^2 x_k^2)} V(t)_{jk} G_{ij} G_{ki} \right] \dd x.
	\end{split} \eeq
	
	To prove that the derivative of $\E[F(X)]$ is small, we use Stein's method to bound the right-hand side of \eqref{eq:F_derivative}. Notice that
	\[
	\frac{\partial G_{ab}}{\partial V(t)_{jk}} = \frac{\partial G_{ab}}{\partial H(t)_{jk}},
	\]
	which can be readily checked from the definition of $H(t)$ in \eqref{eq:interpolation}. Let
	\[
	\Omega_{\epsilon} = \{ \max_{i, j} |G_{ij}- m_{sc} \delta_{ij}| < N^{-\frac{1}{2}+7\epsilon} \text{ for all } x\in [E_1, E_2] \}.
	\]
	Then, $\Omega_{\epsilon}$ holds with high probability. On $\Omega_{\epsilon}$, when we expand the right-hand side of \eqref{eq:F_derivative}, the terms involving the third or higher cumulants are negligible in the sense that it is $O(N^{-\frac{1}{2} + C\epsilon})$. Indeed, if we perform the naive power counting, we get a factor $N^{-\frac{3}{2}}$ from the third (or higher) cumulant, $N^{-\frac{1}{2}+\epsilon}$ from the length $|E_2 - E_1|$ of the interval in the integral, $N^3$ from the three summation indices $i, j, k$, $N^{-\frac{1}{2}+2\epsilon}$ from the factor $(C_1^V \sqrt{N} x_j x_k + C_2^V N x_j^2 x_k^2)$, and $N^{-1+C\epsilon}$ since each term contains at least two off-diagonal entries of $G$, hence the total size of these terms are at most $\caO(N^{-\frac{3}{2}} N^{-\frac{1}{2}+\epsilon} N^3 N^{-\frac{1}{2}+2\epsilon} N^{-1+C\epsilon}) = \caO(N^{-\frac{1}{2} + C\epsilon})$. (While some terms may contain less than two off-diagonal entries in case some summation indices coincide, the power from the summation indices decreases to $N^2$ in such cases.) 
	
	From the power counting argument above, on $\Omega_{\epsilon}$, we find that
	\[ \begin{split}
		&\frac{\dd}{\dd t} \E F(X) \\
		&= -\im \int_{E_1}^{E_2} \sum_{i, j, k} \E[(V(t)_{jk})^2] \frac{C_1^V \sqrt{N} x_j x_k + C_2^V N x_j^2 x_k^2}{2(1 + C_1^V t \sqrt{N} x_i x_j + C_2^V t N x_i^2 x_j^2)} \E \left[ \frac{\partial}{\partial V(t)_{jk}} (F'(X) G_{ij} G_{ki}) \right] \dd x \\
		& \qquad + \caO(N^{-\frac{1}{2} + C\epsilon}) \\
		&= -\frac{1}{2N} \im \int_{E_1}^{E_2} \sum_{i, j, k} (C_1^V \sqrt{N} x_j x_k + C_2^V N x_j^2 x_k^2) \E \left[ F''(X) \frac{\partial X}{\partial V(t)_{jk}} G_{ij} G_{ki} + F'(X) \frac{\partial(G_{ij} G_{ki})}{\partial V(t)_{jk}} \right] \dd x \\
		& \qquad + \caO(N^{-\frac{1}{2} + C\epsilon})\,.
	\end{split} \]
	(In the rest of the proof, we always assume that $\Omega_{\epsilon}$ holds.) We notice that, 
	\[
	\frac{\partial X}{\partial V(t)_{jk}} = -\im \int_{E_1}^{E_2} \sum_a (G_{aj} G_{ka} + G_{ak} G_{ja}) \dd x = O(N^{-\frac{1}{2} + C\epsilon})
	\]
	from the power counting; here the factors are $N^{-\frac{1}{2}+\epsilon}$ from the length $|E_2 - E_1|$, $N$ from the summation index, and $N^{-1+C\epsilon}$ from two off-diagonal entries of $G$. (The notation above illustrates the case where $j \neq k$; however, we easily see that the same calculation applies for $j = k.$) Then, again by power counting,
	\[
	\frac{1}{N} \int_{E_1}^{E_2} \sum_{i, j, k} (C_1^V \sqrt{N} x_j x_k + C_2^V N x_j^2 x_k^2) \E \left[ F''(X) \frac{\partial X}{\partial V(t)_{jk}} G_{ij} G_{ki} \right] \dd x = O(N^{-\frac{1}{2} + C\epsilon}).
	\]
	It remains to bound
	\beq \begin{split} \label{eq:main_estimate}
		&-\frac{1}{2N} \int_{E_1}^{E_2} \sum_{i, j, k} (C_1^V \sqrt{N} x_j x_k + C_2^V N x_j^2 x_k^2) \E \left[ F'(X) \frac{\partial(G_{ij} G_{ki})}{\partial V(t)_{jk}} \right] \dd x \\
		&= \frac{1}{N} \int_{E_1}^{E_2} \sum_{i, j, k} (C_1^V \sqrt{N} x_j x_k + C_2^V N x_j^2 x_k^2) \E \left[ F'(X) (G_{ij} G_{kk} G_{ji} + G_{ij} G_{kj} G_{ki}) \right] \dd x.
	\end{split} \eeq
	For the term with three off-diagonal entries of $G$, we can again use the power counting to show that it is $O(N^{-\frac{1}{2} + C\epsilon})$. Similarly, with the factor $N x_j^2 x_k^2 = O(N^{-1 + 4\epsilon})$, we can find that
	\[
	\frac{1}{N} \int_{E_1}^{E_2} \sum_{i, j, k} C_2^V N x_j^2 x_k^2 \E \left[ F'(X) G_{ij} G_{kk} G_{ji} \right] \dd x = O(N^{-\frac{1}{2} + C\epsilon}).
	\]
	For the remaining term,
	\beq \begin{split} \label{eq:last_term}
		&\frac{1}{N} \int_{E_1}^{E_2} \sum_{i, j, k} C_1^V \sqrt{N} x_j x_k \E \left[ F'(X) G_{ij} G_{kk} G_{ji} \right] \dd x \\
		&= \frac{1}{N} \int_{E_1}^{E_2} \sum_{i, j, k} C_1^V \sqrt{N} x_j x_k \E \left[ F'(X) G_{ij} m_{sc} G_{ji} \right] \dd x \\
		&\quad + \frac{1}{N} \int_{E_1}^{E_2} \sum_{i, j, k} C_1^V \sqrt{N} x_j x_k \E \left[ F'(X) G_{ij} (G_{kk} -m_{sc}) G_{ji} \right] \dd x.
	\end{split} \eeq
	The second term in the right-hand side of \eqref{eq:last_term} is $O(N^{-\frac{1}{2} + C\epsilon})$ again by power counting, where we use the local law $|G_{kk} -m_{sc}| = O(N^{-\frac{1}{2} + C\epsilon})$, proved in Proposition \ref{prop:local}. The first term in the right-hand side of \eqref{eq:last_term} can be bounded as
	\[ \begin{split}
		&\frac{1}{N} \int_{E_1}^{E_2} \sum_{i, j, k} C_1^V \sqrt{N} x_j x_k \E \left[ F'(X) G_{ij} m_{sc} G_{ji} \right] \dd x \\
		&= \frac{C_1^V m_{sc}}{\sqrt{N}} \int_{E_1}^{E_2} \left( \sum_k x_k \right) \sum_{i, j} x_j \E \left[ F'(X) G_{ij} G_{ji} \right] \dd x = O(N^{-\frac{1}{2} + C\epsilon}),
	\end{split} \]
	where we use the assumption that $\sum_k x_k = O(N^{\epsilon})$.
	
	So far, we have seen that $\frac{\dd}{\dd t} \E F(X) = O(N^{-\frac{1}{2} + C\epsilon})$ for any sufficiently small $\epsilon > 0$. Integrating it from $t=0$ to $t=1$, we can conclude that \eqref{eq:Green_function_comparison} holds. This completes the proof of Proposition \ref{prop:Green}.
\end{proof}

\subsection{Proof of the Main Theorem - Supercritical case} \label{subapp:main_sup}

We begin by proving Proposition \ref{prop:eigenvalues}.

\begin{proof}[Proof of Proposition \ref{prop:eigenvalues}]
	It is obvious that $\theta_{3}=\dots=\theta_{N}=0$ since $A_{N}$ is a rank-2 matrix. To ease the notation, we assume that $\E[f_{12}''] \geq 0$ and denote $A_{N} = \bsu \bsu^{T} + \bsv\bsv^{T}$, where
	\beq
	\begin{split}
		\bsu &= (\sqrt{\lambda} \E[f'_{12}])^{1/2}\bsx \\
		\bsv &= (\frac{\lambda}{2} \E[f''_{12}]\sqrt{N})^{1/2}\bsx^{2}.
	\end{split}
	\eeq
	(The case $\E[f_{12}''] < 0$ can be proved in a similar manner.) It can be readily shown that the two nontrivial eigenvalues of $A_{N}$ are given by
	\[
	\frac{\| \bsu \|^{2} + \|\bsv \|^{2} \pm \sqrt{(\| \bsu\|^{2} - \| \bsv \|^{2})^{2} + 4 \langle \bsu, \bsv \rangle^{2}}}{2}.
	\]
	By substituting $\bsu$ and $\bsv$, we get
	\beq\label{eq:theta1}
	\theta_{1} =\frac{\sqrt{\lambda}\E[f'_{12}]+ \frac{\lambda}{2}\E[f_{12}'']\sqrt{N}(\sum_{i} x_{i}^{4}) + \sqrt{\lambda\E[f'_{12}]^{2} + \Theta(N,\bsx)}}{2}
	\eeq
	and
	\beq
	\theta_{2} =\frac{\sqrt{\lambda}\E[f'_{12}]+ \frac{\lambda}{2}\E[f_{12}'']\sqrt{N}(\sum_{i} x_{i}^{4}) - \sqrt{\lambda\E[f'_{12}]^{2} + \Theta(N,\bsx)}}{2}
	\eeq
	where we define
	\[
	\Theta(N,\bsx) := \frac{\lambda^{2}}{4}\E[f_{12}'']^{2}N(\sum_{i} x_{i}^{4})^{2} +  \lambda\sqrt{\lambda}\E[f'_{12}]\E[f''_{12}]\sqrt{N}(2(\sum_{i}x_{i}^{3})^{2} - \sum_{i}x_{i}^{4}).
	\]
	Note that for any $\epsilon > 0$,
	\beq
	\Theta(N,\bsx) = -2\left(\sqrt{\lambda}\E[f'_{12}] \right) \left(\frac{\lambda}{2}\E[f_{12}'']\sqrt{N}(\sum_{i} x_{i}^{4})\right) + O(N^{-1+\epsilon}).
	\eeq
	Then,
	\beq
	\begin{split}
		\sqrt{\lambda\E[f'_{12}]^{2} + \Theta(N,\bsx)} &= \evA \left( 1 + \frac{\Theta(N,\bsx)}{2(\evA)^{2}} + O(N^{-1+\epsilon}) \right) \\
		&= \evA - \frac{\lambda}{2}\E[f_{12}'']\sqrt{N}(\sum_{i} x_{i}^{4}) + O(N^{-1+\epsilon}),
	\end{split}
	\eeq
	and plugging it into \eqref{eq:theta1}, we get
	\beq 
	\theta_{1} = \sqrt{\lambda}\E[f'_{12}] + O(N^{-1+\epsilon})
	\eeq
	and
	\beq
	\theta_{2} = \frac{\lambda}{2} \E[f''_{12}]\sqrt{N}(\sum_{i} x_{i}^{4}) + O(N^{-1+\epsilon}) = O(N^{-\frac{1}{2}+\epsilon}).
	\eeq
	This proves the desired proposition.
\end{proof}

The proof of Proposition \ref{prop:tr} essentially follows the same strategy as the proof of Proposition 7.1 in \cite{locallaw}. However, since the asymptotic spectrum is determined thanks to Theorem 2.1 of \cite{convergence}, we take a more straightforward approach as follows.

\begin{proof}[Proof of Proposition \ref{prop:tr}]
	Let $\mu_k(t) \equiv \mu_k(H(t))$ be the $k$-th largest eigenvalues of $H(t)$. Recall that in the proof of Theorem \ref{thm:rigidity} in Appendix \ref{subapp:rigidity_sup}, we have seen that $\|H - (H-\Delta+D) \| \prec N^{-1/2}$ and $(H-\Delta+D)$ is a rank-2 spiked Wigner matrix. By Proposition \ref{prop:eigenvalues}, Theorem 2.1 in \cite{convergence}, and Theorem 2.7 \cite{knowles2013isotropic}, with overwhelming probability, 
	\[ \vert \mu_{k}(t)-L \vert > \frac{L-2}{2} \]
	for $k = 2,3, \dots, N.$ To prove the proposition, we compare $(\chi_{E} \ast \theta_{\eta_{2}})(\lambda_{i})$ and $\chi_{E-\eta_{1}}$ for the following cases.
	\begin{case}
		For $x < \frac{L+2}{2}, \,  \chi_{E-\eta_{1}}(x) = 0$. Then we get
		\beq \begin{split}
			(\chi_{E} \ast \theta_{\eta_{2}})(x) &= \frac{1}{\pi} \left( \tan^{-1} \frac{E_{+} - x}{\eta_{2}} -\tan^{-1}\frac{E-x}{\eta_{2}} \right) = \frac{1}{\pi}\left(\tan^{-1}\frac{\eta_{2}}{E-x} - \tan^{-1}\frac{\eta_{2}}{E_{+}-x} \right) \\
			&< \frac{1}{2} \left( \frac{\eta_{2}}{E-x} - \frac{\eta_{2}}{E_{+}-x} \right) = \frac{1}{2}\frac{\eta_{2}(E_{+}-E)}{(E-x)(E_{+}-x)} < N^{-1-\frac{3}{2}\epsilon}.
		\end{split} \eeq
		Therefore, 
		\beq 
		\sum_{i : \lambda_{i}<E-\frac{L-2}{2}}((\chi_{E} \ast \theta_{\eta_{2}})(\lambda_{i})-\chi_{E-\eta_{1}}(\lambda_{i}))\leq N \cdot 2N^{-1-\frac{3}{2}\epsilon} \leq N^{-\epsilon} \eeq
	\end{case}
	Observe that only one eigenvalue of $H(t)$, namely $\mu_{1}(t)$, can be in the following two cases with overwhelming probability.
	\begin{case}
		For $x \in [\frac{L+2}{2},E-\eta_{1}),\ \chi_{E-\eta_{1}}(x) = 0.$ Similarly to the formula shown above,
		\beq \begin{split}
			(\chi_{E} \ast \theta_{\eta_{2}})(x) &= \frac{1}{\pi} \left( \tan^{-1} \frac{E_{+} - x}{\eta_{2}} -\tan^{-1}\frac{E-x}{\eta_{2}} \right) = \frac{1}{\pi}\left(\tan^{-1}\frac{\eta_{2}}{E-x} - \tan^{-1}\frac{\eta_{2}}{E_{+}-x} \right) \\
			&< \frac{1}{2} \left( \frac{\eta_{2}}{E-x} - \frac{\eta_{2}}{E_{+}-x} \right) = \frac{1}{2}\frac{\eta_{2}(E_{+}-E)}{(E-x)(E_{+}-x)} \leq N^{-\epsilon}
		\end{split} \eeq
		Therefore,
		\beq 
		\sum_{i : E-\frac{L-2}{2}\leq \lambda_{i}<E-\eta_{1}}((\chi_{E} \ast \theta_{\eta_{2}})(\lambda_{i})-\chi_{E-\eta_{1}}(\lambda_{i})) \leq N^{-\epsilon} \eeq
	\end{case}
	
	\begin{case}
		For $x \in [E-\eta_{1},E_{+}),$ we use the trivial estimate
		\beq (\chi_{E}\ast\theta_{\eta_{2}})(x) < 1 = \chi_{E-\eta_{1}}(x).
		\eeq
	\end{case}
	
	\begin{case}
		There are no eigenvalues in $[E_{+},\infty)$ with overwhelming probability.
	\end{case}
	Considering all cases above, with overwhelming probability, we observe that
	\beq \label{ineq:tr} \Tr(\chi_{E} \ast \theta_{\eta_{2}})(H) \leq \Tr \chi_{E-\eta_{1}}(H) + N^{-\epsilon}. \eeq
	Since $\Tr \chi_{E-\eta_{1}}(H)$ is an integer, by the definition of the cutoff $K$,
	\[ K(\Tr \chi_{E-\eta_{1}}(H) + N^{-\epsilon}) = K(\mathcal{N}_{[E-\eta_{1},E_{+}]}), \]
	where $\mathcal{N}_{[E-\eta_{1},E_{+}]} := \vert \{ i : \lambda_{i} \in [E-\eta_{1},E_{+}] \} \vert.$ Since $K$ is monotone decreasing on $[0,\infty),$ (\ref{ineq:tr}) implies that
	\[ K(\Tr(\chi_{E} \ast \theta_{2})(H)) \geq K(\mathcal{N}_{[E-\eta_{1},E_{+}]}) \]
	with overwhelming probability. By taking expectation on both sides, we find that
	\[ \mathbb{E}[K(\Tr(\chi_{E} \ast \theta_{2})(H))] > \mathbb{P}(\lambda_{1} \leq E-\eta_{1}) - N^{-D},\]
	for any $D > 0.$ The second part of the proposition can also be proved using similar approach by showing that
	\[ \Tr(\chi_{E} \ast \theta_{\eta_{2}})(H) \geq \Tr \chi_{E+\eta_{1}}(H) - N^{-D}.\]
	
\end{proof}

To finish the proof of Theorem \ref{thm:main}, we use the following result on the fluctuation of the outlier eigenvalues of spiked Wigner matrices. Let \[c_{\theta_{j}} := \frac{\theta_{j}^{2}}{\theta_{j}^{2}-1} , \quad \rho_{j} = \theta_{j} + \frac{1}{\theta_{j}}.\]

\begin{prop}[Theorem 1.3 in \cite{fluctuation}]\label{prop:renfrew}
	Let $\lambda_{1}\geq \dots \geq \lambda_{N}$ be ordered eigenvalues of $W_{N} + A_{N}.$ Let $\bsu_{N}^{1},\dots,\bsu_{N}^{k_{j}}$ be a set of orthogonal eigenvectors of $A_{N}$ with eigenvalue $\theta_{j}>1.$ When the eigenvectors of $A_{N}$ are delocalized, the difference between the $k_{j}-$dimensional vector
	\[(c_{\theta_{j}}\sqrt{N}(\lambda_{k_{1}+\dots+k_{j-1}+i} - \rho_{j}), \, i = 1,\dots,k_{j}.)\]
	and the vector formed by the (ordered) eigenvalues of a $k_{j} \times k_{j}$ GOE matrix with the variance of the matrix entries given by $\frac{\theta_{j}^{2}}{\theta_{j}^{2}-1}$ plus a deterministic matrix with $lp$th entry $(1 \leq l,p \leq k_{j})$ given by $\frac{1}{\theta_{j}^{2}N}(\bsu_{N}^{l})^{\ast}\M_{3}\bsu_{N}^{p}$ converged to zero in probability with
	\[(\M_{3})_{ij} := \E[(\sqrt{N}W_{ij})^{3}](1-\delta_{ij}).\]
\end{prop}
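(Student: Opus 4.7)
\textbf{Proof proposal for Proposition \ref{prop:renfrew}.} The plan is to derive the fluctuation law from a secular equation for the outlier eigenvalues, relying on an isotropic local law and a central limit theorem for bilinear forms of the resolvent of $W_N$. Let $U_j \in \R^{N \times k_j}$ be the matrix whose columns are $\bsu_N^1, \dots, \bsu_N^{k_j}$, and write $A_N = \sum_j \theta_j U_j U_j^T + A_N^{\text{rest}}$. If $\lambda$ lies outside the spectrum of $W_N$, then by the Schur complement identity, $\lambda$ is an eigenvalue of $W_N + A_N$ if and only if the $k \times k$ matrix $I + D^{1/2} U^T G_N(\lambda) U D^{1/2}$ is singular, where $G_N(z) := (W_N - z)^{-1}$, $U$ collects all the spike eigenvectors, and $D = \operatorname{diag}(\theta_i)$. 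Restricting attention to the $\theta_j$-block, the outlier eigenvalues near $\rho_j$ are characterized by the condition that $-\theta_j^{-1}$ is an eigenvalue of the $k_j \times k_j$ matrix $U_j^T G_N(\lambda) U_j$.

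First, I would expand $U_j^T G_N(\lambda) U_j$ around $\lambda = \rho_j$. Writing $\lambda = \rho_j + c_{\theta_j}^{-1} N^{-1/2} s$ for a bounded $s$, Taylor expansion and the isotropic local law at a scale $\eta \gg N^{-1}$ give
\[
U_j^T G_N(\lambda) U_j = m_{sc}(\rho_j) I_{k_j} + m_{sc}'(\rho_j) \cdot c_{\theta_j}^{-1} N^{-1/2} s \cdot I_{k_j} + U_j^T \bigl( G_N(\rho_j) - m_{sc}(\rho_j) I \bigr) U_j + o(N^{-1/2}).
\]
Since $m_{sc}(\rho_j) = -1/\theta_j$ and a direct computation gives $m_{sc}'(\rho_j) = \theta_j^2/(\theta_j^2-1) = c_{\theta_j}$, the leading deterministic piece cancels the $-\theta_j^{-1} I_{k_j}$ and the coefficient of $s$ becomes $N^{-1/2} I_{k_j}$. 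The secular equation then reduces, up to $o(N^{-1/2})$ error, to the statement that $-N^{-1/2} s$ is an eigenvalue of the fluctuation matrix $\Xi_N := U_j^T (G_N(\rho_j) - m_{sc}(\rho_j) I) U_j$. Equivalently, the vector of rescaled fluctuations $(c_{\theta_j} \sqrt{N}(\lambda_{k_1 + \dots + k_{j-1} + i} - \rho_j))_{i=1}^{k_j}$ equals, up to $o(1)$, the vector of ordered eigenvalues of $-\sqrt{N}\, \Xi_N$.

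Second, I would identify the limiting law of $\sqrt{N}\, \Xi_N$. For a fixed pair of delocalized deterministic vectors $\bsu, \bsv$ with $\|\bsu\|=\|\bsv\|=1$, the quantity $\sqrt{N}\bigl( \bsu^T G_N(\rho_j) \bsv - m_{sc}(\rho_j) \langle \bsu, \bsv\rangle\bigr)$ is asymptotically Gaussian; this follows from a resolvent expansion combined with Stein's method applied to the independent entries of $W_N$. The second cumulant contribution yields a centered Gaussian with variance $m_{sc}'(\rho_j) = c_{\theta_j}$ per (real symmetric) degree of freedom, which is precisely the GOE scaling advertised with variance $\theta_j^2/(\theta_j^2-1)$. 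The third cumulant contribution produces a deterministic mean shift: using the cumulant expansion for $\E[W_{ab} G_{ab}]$, the leading non-vanishing non-Gaussian term in $\sqrt{N} \Xi_N^{lp}$ is $\frac{1}{N \theta_j^2} (\bsu_N^l)^* \M_3 \bsu_N^p$, which gives the deterministic matrix that appears in the statement. Putting the Gaussian fluctuation and the deterministic shift together and restricting to the $k_j \times k_j$ block yields the claimed limit.

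The main obstacle is controlling joint behavior of the $k_j^2$ bilinear forms $\{\sqrt{N}(\bsu_N^l)^T(G_N(\rho_j) - m_{sc}(\rho_j)I) \bsu_N^p\}$ and identifying their covariance structure. This requires an isotropic local law down to optimal scale (together with its CLT refinement), careful bookkeeping of the higher cumulants of the $W_{ab}$, and verification that all lower-order cross terms with $A_N^{\text{rest}}$ are negligible because of the spectral gap between $\rho_j$ and the other nontrivial eigenvalues of $A_N$. Once the bilinear CLT is established, the symmetry of the matrix $\Xi_N$ together with the continuous mapping theorem (applied to the map sending a $k_j \times k_j$ symmetric matrix to its ordered spectrum) upgrades the entrywise convergence to the desired convergence in distribution of the outlier eigenvalues.
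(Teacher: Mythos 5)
This proposition is not proved in the paper at all: it is quoted as Theorem 1.3 of the cited reference (Renfrew--Soshnikov) and used as a black box, so there is no internal proof to compare against. Your outline is the standard route to such outlier CLTs and is essentially the strategy of the cited work: reduce the outlier locations to a $k_j\times k_j$ secular equation via the low-rank resolvent identity, linearize around $\rho_j$ using $m_{sc}(\rho_j)=-1/\theta_j$, and then prove a CLT for the bilinear forms $\bsu^T(G_N(\rho_j)-m_{sc}(\rho_j)I)\bsv$ by cumulant expansion, with the third cumulant producing the deterministic shift. So the architecture is sound.

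However, the explicit constants in your reduction are wrong, and they do not cancel. From $m_{sc}^2+zm_{sc}+1=0$ one gets $m_{sc}'(z)=m_{sc}(z)^2/(1-m_{sc}(z)^2)$, hence $m_{sc}'(\rho_j)=\tfrac{1}{\theta_j^2-1}$, \emph{not} $\tfrac{\theta_j^2}{\theta_j^2-1}=c_{\theta_j}$ as you claim. Consequently, writing $\lambda=\rho_j+c_{\theta_j}^{-1}N^{-1/2}s$, the coefficient of $s$ in your linearization is $\theta_j^{-2}N^{-1/2}$ rather than $N^{-1/2}$, and the rescaled fluctuation vector is the ordered spectrum of $-\theta_j^2\sqrt{N}\,\Xi_N$, not of $-\sqrt{N}\,\Xi_N$. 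This factor of $\theta_j^2$ is essential for matching the statement: the correct limiting variance of $\sqrt{N}\,\bsu^T(G_N(\rho_j)-m_{sc}(\rho_j)I)\bsv$ for orthonormal $\bsu\perp\bsv$ is $m_{sc}'(\rho_j)-m_{sc}(\rho_j)^2=\tfrac{1}{\theta_j^2(\theta_j^2-1)}$ (not $m_{sc}'(\rho_j)$ as you assert), which after multiplication by $\theta_j^4$ yields the advertised entry variance $c_{\theta_j}$; likewise the third-cumulant term must carry a $\theta_j^{-4}$ prefactor at the level of $\sqrt{N}\,\Xi_N$ so that the final shift is $\tfrac{1}{\theta_j^2 N}(\bsu_N^l)^*\M_3\bsu_N^p$. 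As written, your two normalizations are mutually inconsistent (they would produce a shift of order $(\bsu_N^l)^*\M_3\bsu_N^p/N$ without the $\theta_j^{-2}$), so the bookkeeping in the second step needs to be redone before the sketch can be called a proof.
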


We now prove the first part of Theorem \ref{thm:main}.

\begin{proof}[Proof of Theorem \ref{thm:main} - Supercritical case]
	Fix $\epsilon > 0$ and let $\eta_{1} := N^{-1/2-\epsilon/2}$ and $\eta_{2} := N^{-1/2 -3\epsilon}.$ Consider $E =L + sN^{-1/2}$ with $s \in (-N^{\epsilon},N^{\epsilon})$ and $E_{+} :=  L+ 2N^{-1/2+\epsilon}$. Recall that $\mu_k(t) \equiv \mu_k(H(t))$ is the $k$-th largest eigenvalues of $H(t)$. By Proposition \ref{prop:tr}, we get
	\[
	\mathbb{P}(\mu_{1}(1) \leq E) < \mathbb{E}K(\Tr(\chi_{E+\eta_{1}} \ast \theta_{\eta_{2}})(H)) + N^{-D} 
	\]
	for any $D > 0$.
	By applying Proposition \ref{prop:Green} with $3\epsilon$ instead of $\epsilon,$ we find that
	\beq \begin{split}\label{ineq:main1}
		&\E K(\Tr(\chi_{E+\eta_{1}} \ast \theta_{\eta_{2}})(H)) = \E K \left( \im \frac{N}{\pi} \int^{E_{+}}_{E+\eta_{1}} \Tr G(1,x+\ii \eta_{2}) \, dx \right) \\
		\leq &\E K \left( \im \frac{N}{\pi} \int^{E_{+}}_{E+\eta_{1}} \Tr G(0,x+\ii \eta_{2}) \, dx \right) + N^{-\delta} = \E K \left( \Tr(\chi_{E+\eta_{1}} \ast \theta_{\eta_{2}})(H(0)) \right) + N^{-\delta}  \end{split}\eeq
	for some $\delta > 0.$ We can apply Proposition \ref{prop:tr} again to $H(0)$. Consequently, we obtain
	\beq
	\begin{split}\label{ineq:main2}
		\P &\Bigl( N^{1/2} (\mu_{1}(1) - L ) \leq s\Bigr) =   \mathbb{P}(\mu_{1}(1) \leq E) \\
		&< \E K(\Tr(\chi_{E+\eta_{1}} \ast \theta_{\eta_{2}})(H(1)))  + N^{-D} <  \E K \left( \Tr(\chi_{E+\eta_{1}} \ast \theta_{\eta_{2}})(H(0)) \right) +N^{-D} + N^{-\delta} \\
		&< \P (\mu_{1}(0) \leq E) +2N^{-D} + N^{-\delta} = \P\Bigl(N^{1/2} (\mu_{1}(0)-L) \leq s\Bigr) + 2N^{-D} + N^{-\delta} 
	\end{split}
	\eeq
	Similarly, we also find that
	\beq
	\mathbb{P} \Bigl( N^{1/2}(\mu_{1}(1) - L) \leq s\Bigr) > \mathbb{P}\Bigl(N^{1/2} (\mu_{1}(0)-L) \leq s\Bigr) - 2N^{-D} - N^{-\delta} 
	\eeq
	
	Since the fluctuation of $\mu_1(0)$ coincides with that of $\mu_1(\wt M)$ from Proposition \ref{prop:approx}, it remains to prove the limiting distribution of $N^{1/2} (\mu_{1}(0))$. We apply Proposition \ref{prop:renfrew} to the largest eigenvalue $\mu_{1}(0)$ of $H(0) = V(0) + A_{N}$ to find that the fluctuation of 
	\[
	N^{1/2} \big( \mu_1(0) - (\theta_1 + \frac{1}{\theta_1}) \big)
	\]
	coincides with that of a Gaussian random variable with variance $2\theta_1^2 / (\theta_1^2 -1)$. Further, from Proposition \ref{prop:eigenvalues}, we can replace $\theta_1$ by $\sqrt{\lambda_e}$ with negligible error. To find the mean of the limiting Gaussian, we need to compute the deterministic term involving $M_3$ in Proposition \ref{prop:renfrew}. For simplicity, we let
	\[\gamma_{3} := \E[(V(0)_{12})^{3}]\]
	so $(\M_{3})_{ij} = \gamma_{3}(1-\delta_{ij})$ since $V(0)$ is a real Wigner matrix. Then, 
	\beq
	\frac{1}{\theta^{2}N}\bsx^{T}\M_{3}\bsx = \frac{1}{\theta^{2}N}\left( (\sum_{i}x_{i})^{2} - \gamma_{3} \right) = O(N^{-1 + 2\epsilon})
	\eeq
	with the assumption $\sum_{i} x_{i} = O(N^{\epsilon})$. We thus find that the mean of the limiting Gaussian is $0$, which concludes the proof of Theorem \ref{thm:main}.
\end{proof}

\section{Proof of Main Results - Subcritical Case} \label{app:proof_sub}

In this appendix, we provide the detail of the proofs of the results for the subcritical case. Throughout this appendix we assume that $\lambda_e = \lambda (\E[f'_{12}])^2 < 1$.

\subsection{Proof of the Local Law and Theorem \ref{thm:rigidity}} \label{subapp:rigidity_sub}

We begin by proving the local law, Proposition \ref{prop:local_sub}.
\begin{proof}[Proof of Proposition \ref{prop:local_sub}]
	Recall that we have seen in the proof of \ref{lem:local} that $|m_i(z) - m_{sc}(z)| \prec O(N^{-1})$. Since this estimate does not depend on $z$, 
	\[
	\max_{i, j} |R_{ij}(t, z) - m_i(t, z) \delta_{ij}| = \caO(N^{-\frac{1}{3} + \epsilon}),
	\]
	which was to be proved. (See Theorem 1.7 in \cite{wigner-type}, where the parameters are given as $\kappa(z) = \Theta(1)$ and $\rho(z) = \Theta(\sqrt{\kappa+\eta})$ from (1.17), (1.23), and (4.5d) in \cite{wigner-type}.) 
\end{proof}

We first notice that Proposition \ref{prop:approx} holds for the subcritical case without any change. Thus, Theorem \ref{thm:rigidity} is a simple corollary of Proposition \ref{prop:stick}, since from the rigidity of the eigenvalues of Wigner-type matrices, $|\mu_1(V) - 2| \prec N^{-2/3}$. (We refer to Corollary 1.11 in \cite{wigner-type} for more detail.) We now prove Proposition \ref{prop:stick}.

\begin{proof}[Proof of Proposition \ref{prop:stick}]
	The first inequality of the proposition, $\mu_1(H) \geq \mu_1(V)$, is obvious from the min-max principle for the largest eigenvalue. We now suppose that $\mu_1(H) - \mu_1(V) > N^{-3/4}$.
	
	Fix $\epsilon > 0$. We choose the parameter $z \equiv \tau + \ii \eta$ with $\tau = \mu_1(V) + \xi$ for $\xi \in [N^{-3/4}, N^{-1/2+\epsilon}]$ and $\eta = N^{-5/6}$. Set $R \equiv R(z) = (V - zI)^{-1}$, $G \equiv G(z) = (H - zI)^{-1}$. Our first goal is to show that $|R_{ii} - G_{ii}| = \caO(N^{-1/3})$. To prove this, we start with the resolvent identity
	\beq \label{eq:RG_diff_sub}
	R_{ii} - G_{ii} = \langle \bse_i, (R-G) \bse_i \rangle = \sqrt{\lambda} \E[f'_{12}] \langle \bse_i, R \bsx \rangle  \langle \bsx, G \bse_i \rangle + \frac{\lambda}{2} \E[ f''_{12} ] \sqrt{N} \langle \bse_i, R \bsx^2 \rangle \langle \bsx^2, G \bse_i \rangle,
	\eeq
	which is obtained by letting $i=j$ in \eqref{eq:RG_diff}. To estimate the terms involving $R$ in \eqref{eq:RG_diff_sub}, we apply the anisotropic local law, Theorem 1.13 in \cite{wigner-type}, which is 
	\beq \label{eq:isotropic_sub}
	|\langle \bsw, R \bsv \rangle - m_{sc}(z) \langle \bsw, \bsv \rangle| \prec N^{-1/6}
	\eeq
	for any unit vectors $\bsw, \bsv$. (Here, the parameters are given as $\kappa(z) = \Theta(1)$ and $\rho(z) = O(\eta) = O(N^{-5/6})$ from (1.17), (1.23), and (4.5d) in \cite{wigner-type}.) From \eqref{eq:isotropic_sub}, we find that
	\[
	\langle \bsx, R \bse_i \rangle = \caO(N^{-1/6}), \quad \langle \bsx, R \bsx^2 \rangle = \caO(N^{-2/3}).
	\]
	
	To prove an estimate for $\langle \bsx, G \bse_i \rangle$, we use a bootstrap argument. In \eqref{eq:Ge_j expansion} in the proof of Proposition \ref{prop:local}, by using the resolvent identity in \eqref{eq:resolvent_identity}, we had
	\beq \label{eq:local_bootstrap}
	\langle \bsx, G \bse_i \rangle = \frac{1}{1+ \sqrt{\lambda} \E[f'_{12}] \langle \bsx, R \bsx \rangle} \left( \langle \bsx, R \bse_i \rangle - \frac{\lambda}{2} \E[ f''_{12} ] \sqrt{N} \langle \bsx, R \bsx^2 \rangle \langle \bsx^2, G \bse_i \rangle \right).
	\eeq
	From \eqref{eq:isotropic_sub}, $\langle \bsx, R \bsx \rangle = -1 + o(1)$ with overwhelming probability. Since $\sqrt{\lambda} \E[f'_{12}] < 1$, we find that
	\[
	|1+ \sqrt{\lambda} \E[f'_{12}] \langle \bsx, R \bsx \rangle| > c > 0
	\]
	with overwhelming probability, for some ($N$-independent) constant $c$. Then, applying the trivial bound $\| G \| \leq \eta^{-1} = N^{5/6}$, from \eqref{eq:local_bootstrap}, we find $\langle \bsx, G \bse_i \rangle = \caO(N^{1/6})$, which serves as an a priori estimate. To improve this bound, we consider 
	\[
	\langle \bsx^2, R \bse_i \rangle - \langle \bsx^2, G \bse_i \rangle
	\]
	using the resolvent identity in \eqref{eq:resolvent_identity} and solve it for $\langle \bsx^2, G \bse_i \rangle$. Then, we get
	\beq \label{eq:local_bootstrap2}
	\langle \bsx^2, G \bse_i \rangle = \frac{1}{1+ \frac{\lambda}{2} \E[ f''_{12} ] \sqrt{N} \langle \bsx^2, R \bsx^2 \rangle} \left( \langle \bsx^2, R \bse_i \rangle - \sqrt{\lambda} \E[f'_{12}] \langle \bsx^2, R \bsx \rangle \langle \bsx, G \bse_i \rangle \right).
	\eeq
	We notice that $\langle \bsx^2, R \bsx^2 \rangle = \caO(N^{-1})$, $\langle \bsx^2, R \bse_i \rangle = \caO(N^{-2/3})$, and $\langle \bsx^2, R \bsx \rangle = \caO(N^{-2/3})$ from (\ref{eq:isotropic_sub}). Thus, from the a priori estimate $\langle \bsx, G \bse_i \rangle = \caO(N^{1/6})$, we obtain that $\langle \bsx^2, G \bse_i \rangle = \caO(N^{-1/2})$. Plugging it back to \eqref{eq:local_bootstrap}, we now have that $\langle \bsx, G \bse_i \rangle = \caO(N^{-1/6})$.
	
	So far, we have seen that
	\[
	\langle \bsx, G \bse_i \rangle = \caO(N^{-1/6}), \quad \langle \bsx^2, G \bse_i \rangle = \caO(N^{-1/2}).
	\]
	Plugging these back into \eqref{eq:RG_diff_sub}, we finally obtain that $|R_{ii} - G_{ii}| = \caO(N^{-1/3})$. In particular,
	\beq \label{eq:contradict1}
	|\Tr R - \Tr G| = \caO(N^{2/3}).
	\eeq
	
	To prove the desired theorem, we consider
	\[
	\im \Tr R = \sum_{i=1}^N \frac{\eta}{(\mu_i(V) - \tau)^2 + \eta^2},
	\]
	where $\mu_1(V) \geq \dots \geq \mu_N(V)$ are the eigenvalues of $V$. We now find an upper bound for $\im \Tr R$ by partitioning the eigenvalues $\{ \mu_i(V) \}$ into the following four groups:
	\[ \begin{split}
		E_1 &= \{\mu_i(V) : 0 \leq \mu_1(V) - \mu_i(V) \leq N^{-2/3} \}, \quad E_2 = \{\mu_i(V) : N^{-2/3} < \mu_1(V) - \mu_i(V) \leq N^{-5/9} \}, \\
		E_3 &= \{\mu_i(V) : N^{-5/9} < \mu_1(V) - \mu_i(V) \leq N^{-2/9} \}, \quad E_4 = \{\mu_i(V) : \mu_1(V) - \mu_i(V) > N^{-2/9} \}.
	\end{split} \]
	From the rigidity of eigenvalues $\mu_i(V)$, we have that
	\[
	|E_1| = \caO(1), \quad |E_2| = \caO(N^{1/6}), \quad |E_3| = \caO(N^{1/3}), \quad |E_4| = O(N).
	\]
	Recall that we are assuming $\eta = N^{-5/6}$ and $\tau - \mu_1(V) = \xi \geq N^{-3/4} \gg \eta$. Then, for any eigenvalues in $E_1$,
	\[
	(\mu_i(V) - \tau)^2 + \eta^2 \geq N^{-3/2} + \eta^2 \geq N^{-3/2}.
	\]
	For the eigenvalues in $E_2$,
	\[
	(\mu_i(V) - \tau)^2 + \eta^2 \geq (\mu_i(V) - \mu_1(V))^2 \geq N^{-4/3},
	\]
	and similar estimates hold for the eigenvalues in $E_3$ and $E_4$, respectively. Thus, we get
	\beq \begin{split} \label{eq:Tr_R_estimate1}
		\im \Tr R &= \left( \sum_{E_1} + \sum_{E_2} + \sum_{E_3} + \sum_{E_4} \right) \frac{\eta}{(\mu_i(V) - \tau)^2 + \eta^2} \\
		&= \caO(N^{-\frac{5}{6}}/N^{-\frac{3}{2}}) + \caO(N^{\frac{1}{6}} N^{-\frac{5}{6}}/N^{-\frac{4}{3}}) + \caO(N^{\frac{1}{3}} N^{-\frac{5}{6}}/N^{-\frac{10}{9}}) + O(N^1 N^{-\frac{5}{6}}/N^{-\frac{4}{9}}) \\
		&= \caO(N^{\frac{2}{3}}).
	\end{split} \eeq
	
	Now, suppose that $\mu_1(H) \in [\tau, \tau+\eta]$. Then, with $z = \tau + \ii \eta$,
	\[
	\im \Tr G = \sum_{i=1}^N \frac{\eta}{(\mu_i(H) - \tau)^2 + \eta^2} \geq \frac{\eta}{(\mu_1(H) - \tau)^2 + \eta^2} \geq \frac{1}{2\eta} = \frac{N^{\frac{5}{6}}}{2},
	\]
	which does not hold with overwhelming probability, since $\im \Tr G = \caO(N^{2/3})$ from \eqref{eq:contradict1} and \eqref{eq:Tr_R_estimate1}. Considering $O(N^{1/3+\epsilon})$ such intervals, we can conclude that 
	\[
	\mu_1(H) \notin [\mu_1(V) + N^{-3/4}, \mu_1(V) + N^{-1/2 + \epsilon}]
	\]
	with overwhelming probability.
	
	Finally, adapting the strategy of the proof of Theorem \ref{thm:rigidity} in the supercritical case, we can prove that $|\mu_1(H) - \mu_1(V)| \prec N^{-1/2}$. This completes the proof of the desired theorem.
\end{proof}

\subsection{Proof of the Main Theorem - Subcritical Case} \label{subapp:main_sub}

We begin by introducing a result analogous to Proposition \ref{prop:tr}.

\begin{prop}\label{prop:tr_sub}
	Let $V(t)$ be the matrix defined in \eqref{eq:def_V(t)} and denote by $\mu_{1}(V(t))$ the largest eigenvalue of $V(t)$. Fix $\epsilon > 0$. Let $E \in \mathbb{R}$ such that $\vert E -2 \vert \leq N^{-2/3+\epsilon}$. Let $E_{+} := 2 + 2N^{-2/3+\epsilon}$ and define $\chi_{E}:= \chi_{[E,E_{+}]}$. Set $\eta_{1}:=N^{-2/3-\epsilon/2}$ and $\eta_{2}:=N^{-2/3-3\epsilon}$. Let $K$ and $\theta_{\eta}$ be defined as in Proposition \ref{prop:tr}.	Then, for any $D > 0,$
	\[ \mathbb{E}[K(\Tr(\chi_{E} \ast \theta_{\eta_{2}})(H))] > \mathbb{P}(\mu_{1}(H(t)) \leq E-\eta_{1}) - N^{-D}\]
	and
	\[ \mathbb{E}[K(\Tr(\chi_{E} \ast \theta_{\eta_{2}})(H))] < \mathbb{P}(\mu_{1}(H(t)) \leq E-\eta_{1}) + N^{-D}\]
	for any sufficiently large $N$.
\end{prop}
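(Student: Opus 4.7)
My plan is to adapt the case-analysis used in the proof of Proposition \ref{prop:tr} to the edge regime; I read the proposition as concerning $V(t)$ (the symbol $H$ in the displayed bounds is evidently a typographical holdover from the supercritical statement). The core strategy is to partition the sum
\[
\Tr (\chi_E \ast \theta_{\eta_2})(V(t)) = \sum_i (\chi_E \ast \theta_{\eta_2})(\mu_i(V(t)))
\]
into contributions from (i) bulk eigenvalues with $E - \mu_i \geq 1$, (ii) near-edge eigenvalues with $E - \mu_i \in [\eta_1, 1)$, (iii) eigenvalues in $[E - \eta_1, E_+)$, and (iv) eigenvalues above $E_+$, and then bound each contribution separately.

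The main input is eigenvalue rigidity of the Wigner-type matrix $V(t)$ near the edge: by Corollary 1.11 of \cite{wigner-type} (whose hypotheses are secured by Proposition \ref{prop:local_sub}), with overwhelming probability $|\mu_k(V(t)) - \gamma_k| \prec \min(k, N-k+1)^{-1/3} N^{-2/3}$, where $\gamma_k$ is the classical location of the $k$-th largest eigenvalue under the semicircle law. Case (iv) is immediate, since $E_+ - 2 = 2N^{-2/3+\epsilon} \gg N^{-2/3}$. For cases (i) and (ii), I use the identity
\[
(\chi_E \ast \theta_{\eta_2})(x) = \frac{1}{\pi}\left( \arctan\frac{\eta_2}{E-x} - \arctan\frac{\eta_2}{E_+ - x} \right) \leq \frac{\eta_2 (E_+ - E)}{(E-x)(E_+ - x)}
\]
for $x < E$. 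In case (i), $(E - \mu_i)(E_+ - \mu_i) = \Theta(1)$, so the sum over $\caO(N)$ eigenvalues is $\caO(N \eta_2 (E_+ - E)) = \caO(N^{-1/3 - 2\epsilon})$. In case (ii), I do a dyadic decomposition at scales $d_\ell := 2^\ell \eta_1$; by rigidity the number of $\mu_k$ with $E - \mu_k \in [d_\ell, 2d_\ell]$ is $\prec \max(1, N d_\ell^{3/2})$, and each term contributes $\caO(\eta_2/d_\ell)$ when $d_\ell \lesssim E_+ - E$ and $\caO(\eta_2(E_+-E)/d_\ell^2)$ otherwise. Bookkeeping across the three regimes $d_\ell \in [\eta_1, N^{-2/3}]$, $d_\ell \in (N^{-2/3}, N^{-2/3+\epsilon}]$, and $d_\ell > N^{-2/3+\epsilon}$ yields a total of $\caO(N^{-c\epsilon})$ for some $c > 0$. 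Case (iii) is handled by the trivial bound $(\chi_E \ast \theta_{\eta_2})(x) \leq 1 = \chi_{E-\eta_1}(x)$.

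Combining the four cases gives
\[
\Tr (\chi_E \ast \theta_{\eta_2})(V(t)) \leq \Tr \chi_{E-\eta_1}(V(t)) + N^{-c\epsilon}
\]
with overwhelming probability. Since $\Tr \chi_{E-\eta_1}(V(t))$ is a non-negative integer and $K$ is monotone decreasing on $[0, \infty)$ with $K(x) = 1$ for $|x| < 1/3$, applying $K$ yields $K(\Tr (\chi_E \ast \theta_{\eta_2})(V(t))) \geq \mathbf{1}(\mu_1(V(t)) \leq E - \eta_1)$ on this event. Taking expectations and absorbing the complement event into the $N^{-D}$ error proves the first inequality. The second inequality follows by the symmetric argument comparing against $\chi_{E+\eta_1}$ in place of $\chi_{E-\eta_1}$.

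The main obstacle is case (ii). In the supercritical proof of Proposition \ref{prop:tr}, only the single outlier $\mu_1$ could lie near $E$ because the bulk was separated by the macroscopic gap $L - 2$, so the near-$E$ bookkeeping was trivial. Here, a polynomially-growing number of eigenvalues of $V(t)$ accumulate on the $N^{-2/3}$ scale, and the individual contribution from a single eigenvalue at distance $\eta_1$ below $E$ can be as large as $\eta_2/\eta_1 = N^{-5\epsilon/2}$. Verifying that these contributions telescope dyadically to an $o(1)$ bound is the delicate step, and it relies crucially on having optimal $N^{-2/3}$-scale rigidity at the edge for $V(t)$; this is the reason the local law in Proposition \ref{prop:local_sub} must be uniform in $t \in [0, 1]$.
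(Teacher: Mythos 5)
Your argument is correct and is precisely the standard Lee--Schnelli-type proof that the paper invokes by citation (Proposition 7.1 of \cite{locallaw}, adjusted via the rigidity estimates of \cite{wigner-type}); the paper does not write out the details, and your dyadic bookkeeping in the edge regime supplies exactly the "straightforward adjustments" the paper alludes to. Your readings of the typographical issues ($H$ versus $V(t)$, and $E+\eta_1$ in the second inequality) also match the paper's intent as seen in the proof of Proposition \ref{prop:tr}.
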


\begin{proof}
	The proof is standard; see, e.g., Proposition 7.1 in \cite{locallaw}, with straightforward adjustments by applying the results in \cite{wigner-type}.
\end{proof}

Next, we introduce and prove the Green function comparison theorem, analogous to Proposition \ref{prop:Green}.

\begin{prop} \label{prop:Green_sub}
	Let $\epsilon>0$ and set $\eta = N^{-\frac{2}{3}-\epsilon}$. Let $E_1, E_2 \in \mathbb{R}$ satisfy
	\[
	\left| E_{\ell} - 2 \right| \leq N^{-\frac{2}{3} + \epsilon} \quad (\ell = 1, 2).
	\]
	Let $F: \mathbb{R} \to \mathbb{R}$ be a smooth function satisfying
	\[
	\max |F^{(m)}(x)| \leq C_m (1+|x|)^C \qquad (m=0, 1, 2, 3, 4)
	\]
	for some constants $C_m>0$. Then, for any sufficiently small $\epsilon>0$, there exists $\delta>0$ such that
	\beq 
	\left| \E F \left( \im \int_{E_1}^{E_2} \Tr R(1, x+\ii \eta) \dd x \right) - \E F \left( \im \int_{E_1}^{E_2} \Tr R(0, x+\ii \eta) \dd x \right) \right| \leq N^{-\delta}.
	\eeq
\end{prop}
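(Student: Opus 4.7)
The plan is to adapt the proof of Proposition \ref{prop:Green} in the supercritical case, replacing the length scale $N^{-1/2}$ by the edge scale $N^{-2/3}$ and using the local law in Proposition \ref{prop:local_sub} in place of Proposition \ref{prop:local}. Set
\[
X(t) := \im \int_{E_1}^{E_2} \Tr R(t, x+\ii\eta)\,\dd x,
\]
and aim to show that $\frac{\dd}{\dd t}\E F(X(t)) = \caO(N^{-\delta})$ uniformly in $t \in [0,1]$, which yields the claim upon integration. Since $R(t,z) = (V(t)-zI)^{-1}$, differentiating with respect to $t$ and using the standard identity $\partial R_{ab}/\partial V(t)_{jk} = -R_{aj}R_{kb} - R_{ak}R_{jb}$ (with the obvious modification when $j=k$) produces an expression of the form
\[
\frac{\dd}{\dd t}\E F(X(t)) = -\im \int_{E_1}^{E_2} \E\!\left[F'(X) \sum_{i,j,k} \dot V(t)_{jk}\, R_{ij}R_{ki}\right]\dd x,
\]
where $\dot V(t)_{jk}$ is the explicit rescaling factor times $V(t)_{jk}$ coming from the definition in \eqref{eq:def_V(t)}.

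Next I would apply the cumulant expansion to the random factor $V(t)_{jk}$, which is legitimate because $F$ is $C^4$ with polynomial growth and $\sqrt{N}W_{jk}$ has all finite moments. The second cumulant contribution produces two families of terms: one involving $F''(X) \partial X/\partial V(t)_{jk}$, and one involving $F'(X)$ times three resolvent entries arising from differentiating $R_{ij}R_{ki}$. The third and fourth cumulants produce terms with prefactor $N^{-3/2}$ or $N^{-2}$, which I would bound by the naive power count, using $|R_{ij} - m_{sc}\delta_{ij}| \prec N^{-1/3}$ from Proposition \ref{prop:local_sub} and $|E_2 - E_1| \leq N^{-2/3+\epsilon}$, to show they are $\caO(N^{-\delta})$ for a suitable $\delta>0$.

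The main obstacle will be the leading-order (second-cumulant) contribution
\[
\frac{1}{N}\int_{E_1}^{E_2} \sum_{i,j,k} (C_1^V \sqrt{N} x_j x_k + C_2^V N x_j^2 x_k^2)\, \E\!\left[F'(X)\bigl(R_{ij}R_{kk}R_{ji} + R_{ij}R_{kj}R_{ki}\bigr)\right]\dd x,
\]
which must be shown to be $\caO(N^{-\delta})$ despite the larger off-diagonal bound $N^{-1/3}$ at the edge. As in the supercritical case, the triple off-diagonal piece is handled by power counting, and the $C_2^V N x_j^2 x_k^2$ piece is small because $x_j^2 x_k^2 \prec N^{-2}$. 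For the remaining piece, writing $R_{kk} = m_{sc}(z) + (R_{kk}-m_{sc}(z))$, the correction term is controlled by the local law, while the leading term becomes
\[
\frac{C_1^V m_{sc}(z)}{\sqrt{N}}\int_{E_1}^{E_2}\!\Bigl(\sum_k x_k\Bigr)\sum_{j} x_j F'(X)\,(R^2)_{jj}\,\dd x,
\]
which is handled by the condition $\sum_k x_k = O(N^{\epsilon})$ together with the Ward identity $\sum_j |R_{jk}|^2 = \eta^{-1}\im R_{kk}$; the bound $\im R_{kk} \prec N^{-1/3}$ at the spectral edge is the key ingredient to make the total power-count compatible with the $N^{-2/3+\epsilon}$ integration length.

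Finally, after verifying that each of these contributions is $\caO(N^{-\delta})$ uniformly in $t$, I would integrate the bound on $\frac{\dd}{\dd t}\E F(X(t))$ from $t=0$ to $t=1$ to conclude the proposition. The main technical care lies in the power counting at the edge scale, which is tighter than in the supercritical case; the harder estimates depending on $\sum_i x_i$ and $\max_i|x_i|$ enter exactly as in the supercritical proof, and no new structural idea is required.
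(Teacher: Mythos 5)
Your proposal is correct and follows essentially the same route as the paper, which itself carries out the supercritical Green function comparison verbatim at the edge scale: differentiate in $t$, cumulant-expand in $V(t)_{jk}$, discard third and higher cumulants by power counting with the edge local law $|R_{ij}-m_{sc}\delta_{ij}|\prec N^{-1/3}$, split the second-cumulant term, decompose $R_{kk}=m_{sc}+(R_{kk}-m_{sc})$, and use $\sum_k x_k = O(N^{\epsilon})$ for the leading piece before integrating over $t$. The only cosmetic difference is that you invoke the Ward identity for $\sum_j x_j (R^2)_{jj}$ where the paper uses the same naive off-diagonal power count; both yield the identical bound $\caO(N^{-1/3+C\epsilon})$.
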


The proof of \ref{prop:Green_sub} is almost a verbatim copy of the proof of \ref{prop:Green}. We nevertheless present here a detailed proof of \ref{prop:Green_sub} for completeness.
\begin{proof}[Proof of Proposition \ref{prop:Green_sub}]
	Fix $x \in [E_1, E_2]$. For simplicity, set
	\[
	R \equiv R(t, x+\ii \eta), \quad X \equiv \im \int_{E_1}^{E_2} \Tr R(t, x+\ii \eta) \dd x.
	\]
	Then, $X = \caO(N^{C\epsilon})$, $F^{(m)}(X) = \caO(N^{C\epsilon})$ for some constant $C$. (Throughout the proof, we use $C$ to denote positive constants independent of $N$, whose value may change from line to line.) Differentiating $F(X)$ with respect to $t$, we find the following formula, anaogous to \eqref{eq:F_derivative}:
	\beq \begin{split} \label{eq:F_derivative_sub}
		\frac{\dd}{\dd t} \E F(X) = -\im \int_{E_1}^{E_2} \E \left[ F'(X) \sum_{i, j, k} \frac{C_1^V \sqrt{N} x_j x_k + C_2^V N x_j^2 x_k^2}{2(1 + C_1^V t \sqrt{N} x_j x_k + C_2^V t N x_j^2 x_k^2)} V(t)_{jk} G_{ij} G_{ki} \right] \dd x.
	\end{split} \eeq
	Applying Stein's method on the event
	\[
	\Omega_{\epsilon} = \{ \max_{i, j} |R_{ij}- m_{sc} \delta_{ij}| < N^{-\frac{1}{3}+\epsilon} \text{ for all } x\in [E_1, E_2] \},
	\]
	we expand the right-hand side of \eqref{eq:F_derivative_sub}. In the expansion, the terms involving the third or higher cumulants are negligible in the sense that it is $O(N^{-\frac{1}{3} + C\epsilon})$, which can be proved by performing the naive power counting. (Alternatively, we can rely on the edge universality for Wigner-type matrices to assume that the entries of $V$ are Gaussian for which the third or higher cumulants vanish; see Theorem 1.16 in \cite{wigner-type} and the remark below it.)
	
	From the power counting argument as in the proof of \ref{prop:Green}, on $\Omega_{\epsilon}$, we find that
	\[ \begin{split}
		&\frac{\dd}{\dd t} \E F(X) \\
		&= -\im \int_{E_1}^{E_2} \sum_{i, j, k} \E[(V(t)_{jk})^2] \frac{C_1^V \sqrt{N} x_j x_k + C_2^V N x_j^2 x_k^2}{2(1 + C_1^V t \sqrt{N} x_i x_j + C_2^V t N x_i^2 x_j^2)} \E \left[ \frac{\partial}{\partial V(t)_{jk}} (F'(X) R_{ij} R_{ki}) \right] \dd x \\
		& \qquad + \caO(N^{-\frac{1}{3} + C\epsilon}) \\
		&= -\frac{1}{2N} \im \int_{E_1}^{E_2} \sum_{i, j, k} (C_1^V \sqrt{N} x_j x_k + C_2^V N x_j^2 x_k^2) \E \left[ F''(X) \frac{\partial X}{\partial V(t)_{jk}} R_{ij} R_{ki} + F'(X) \frac{\partial(R_{ij} R_{ki})}{\partial V(t)_{jk}} \right] \dd x \\
		& \qquad + \caO(N^{-\frac{1}{3} + C\epsilon})\,.
	\end{split} \]
	(In the rest of the proof, we always assume that $\Omega_{\epsilon}$ holds.) We notice that
	\[
	\frac{\partial X}{\partial V(t)_{jk}} = -\im \int_{E_1}^{E_2} \sum_a (R_{aj} R_{ka} + R_{ak} R_{ja}) \dd x = O(N^{-\frac{1}{3} + C\epsilon})
	\]
	from the power counting, and also
	\[
	\frac{1}{N} \int_{E_1}^{E_2} \sum_{i, j, k} (C_1^V \sqrt{N} x_j x_k + C_2^V N x_j^2 x_k^2) \E \left[ F''(X) \frac{\partial X}{\partial V(t)_{jk}} R_{ij} R_{ki} \right] \dd x = O(N^{-\frac{1}{3} + C\epsilon}).
	\]
	For the main term, which coincides with \eqref{eq:main_estimate}, we apply the same expansion as in the proof of \ref{prop:Green}. Then, we can find that
	\[
	\frac{1}{N} \int_{E_1}^{E_2} \sum_{i, j, k} C_2^V N x_j^2 x_k^2 \E \left[ F'(X) R_{ij} R_{kk} R_{ji} \right] \dd x = O(N^{-\frac{1}{3} + C\epsilon}).
	\]
	For the remaining term,
	\[
	\frac{1}{N} \int_{E_1}^{E_2} \sum_{i, j, k} C_1^V \sqrt{N} x_j x_k \E \left[ F'(X) R_{ij} R_{kk} R_{ji} \right] \dd x = O(N^{-\frac{1}{3} + C\epsilon})
	\]
	by splitting $R_{kk}$ into $m_{sc}$ and $(R_{kk}-m_{sc})$ as in the proof of \ref{prop:Green}. 
	
	From the estimates above, we find that $\frac{\dd}{\dd t} \E F(X) = O(N^{-\frac{1}{3} + C\epsilon})$ for any sufficiently small $\epsilon > 0$. Integrating it from $t=0$ to $t=1$, we can complete the proof of Proposition \ref{prop:Green_sub}.
\end{proof}

Finally, we prove the second part of Theorem \ref{thm:main}.
\begin{proof}[Proof of Theorem \ref{thm:main} - Subcritical case]
	Proceeding as in the proof of Theorem \ref{thm:main} in Appendix \ref{subapp:main_sup}, we can prove that the limiting distribution of $\mu_1 (\wt M)$ coincides with that of $\mu_1(V)$. Since $V$ is a Wigner matrix, it is well-known that the fluctuation of $\mu_1 (\wt M)$ is given by the GOE Tracy--Widom distribution. (See, e.g., Theorem 1.2 in \cite{lee2014necessary}.) This proves the desired theorem.
\end{proof}

\section{Discussion on Different Scaling for the Effective SNR} \label{app:SNR_scaling}

In this Appendix, we analyze the case $\lambda_e = 0$ and $\lambda = \lambda_0 \sqrt{N}$, considered in Remark \ref{rem:large_SNR}, more in detail. Recall the short-handed notation in (\ref{eq:f_short}) and the assumption that $\E[f_{12}'] = 0$, $\E[f_{12}''] \neq 0$. For simplicity, we further assume the i.i.d. prior, where $\sqrt{N} x_i$ are i.i.d. with mean $0$ and variance $1$, and all moments of $\sqrt{N} x_i$ are finite. While we discuss mainly the supercritical case in this appendix, the subcritical case can also be handled in a similar manner.

\subsection{Approximation by a Spiked Random Matrix}
From Taylor's expansion,
\beq \begin{split} \label{eq:taylor_N}
	\wt M_{ij} &= \sum_{\ell=0}^4 \frac{\lambda^{\ell/2} N^{(\ell-1)/2}}{\ell !} f^{(\ell)}_{ij} x_i^{\ell} x_j^{\ell} + O(N^{13/4} x_i^5 x_j^5) \\
	&= \frac{f_{ij}}{\sqrt{N}} + \sum_{\ell=1}^4 \frac{\lambda_0^{\ell/2} N^{(3\ell-2)/4}}{\ell !} (f^{(\ell)}_{ij} - \E[f^{(\ell)}_{ij}]) x_i^{\ell} x_j^{\ell} + \sum_{\ell=2}^4 \frac{\lambda_0^{\ell/2} N^{(3\ell-2)/4}}{\ell !} \E[f^{(\ell)}_{ij}] x_i^{\ell} x_j^{\ell} + O(N^{13/4} x_i^5 x_j^5) \\
	&=: H_{ij} + O(N^{13/4} x_i^5 x_j^5).
\end{split} \eeq
Note that $H$ in the right-side of \eqref{eq:taylor_N} is different from the matrix $H$ defined in \ref{prop:approx}, but we use the notation only in this appendix to make the comparison between the case $\lambda_e = 0$ and the case $\lambda_e \neq 0$. As in Appendix \ref{subapp:main_sup}, the Frobenius norm of $(\wt M - H)$ is bounded by
\[
\| \wt M - H \| = \caO \left( N^{13/4} \Big(\sum_{i, j} x_i^{10} x_j^{10} \Big)^{1/2} \right) = \caO(N^{-3/4}),
\]
which will be negligible when compared to the fluctuation of the largest eigenvalue. This in particular shows that we can use $H$ as an approximation to $\wt M$.

As a next step, we denote the noise matrix in $H$ by $V$, whose entries are
\[
V_{ij} = \frac{f_{ij}}{\sqrt{N}} + \sum_{\ell=1}^4 \frac{\lambda_0^{\ell/2} N^{(3\ell-2)/4}}{\ell !} (f^{(\ell)}_{ij} - \E[f^{(\ell)}_{ij}]) x_i^{\ell} x_j^{\ell}.
\]
Again, we use the $V$ as in the equation above only in this appendix to ease the notation when comparing the cases $\lambda_e = 0$ and $\lambda_e \neq 0$. The matrix $V$ is a Wigner-type matrix with the variance
\[
\E[(V_{ij})^2] = \frac{1}{N} \left( 1 + C_1^V N^{3/4} x_i x_j + C_2^V N^{3/2} x_i^2 x_j^2 + C_3^V N^{9/4} x_i^3 x_j^3 + C_4^V N^{3} x_i^4 x_j^4 \right) + \caO(N^{-9/4}).
\]
Here, we can notice that the variance of $V_{ij}$ in this case is significantly larger than that in the case $\lambda_e \neq 0$, computed in \eqref{eq:var_V}. Indeed, if we compute the sum of $\E[(V_{ij})^2]$, we find
\[
\sum_{i, j} \E[(V_{ij})^2] = N + C_2^V \sqrt{N} + O(1).
\]
Note that
\beq \label{eq:C_2^V}
C_2^V = \E[f_{12}^2] + \E[f_{12} f_{12}''] - \E[f_{12}] \E[f_{12}''].
\eeq
Since the sum of the variances of the entries of a Wigner matrix is $N + O(1)$, it means that the limiting empirical distribution of $V$ is not exactly given by the semicircle law but it is stretched by the factor of $(1+C_2^V N^{-1/2})^{-1/2}$. To compensate this factor, we introduce the interpolation matrix defined by
\[
V(t)_{ij} = \left( 1+C_2^V t N^{-1/2} \right)^{-1/2} \sqrt{1 + \sum_{\ell=1}^4 C_\ell^V t N^{3\ell/4} x_i^{\ell} x_j^{\ell}} \frac{V_{ij}}{\sqrt{N \E[(V_{ij})^2]}}.
\]
Note that
\[
\E[(V(t)_{ij})^2] = \frac{1}{N} + C_1^V t N^{-1/4} x_i x_j + C_2^V t \sqrt{N} \left(x_i^2 x_j^2 - \frac{1}{N^2} \right) + C_3^V t N^{9/4} x_i^3 x_j^3 - C_1^V C_2^V t^2 N^{1/4} x_i x_j + \caO(N^{-2}).
\]
From the definition of $V(t)$, we have
\beq \label{eq:correction}
V = \left( 1+C_2^V N^{-1/2} \right)^{1/2} V(1).
\eeq

\subsection{Local Laws}
We now continue to prove the local laws for $V(t)$, the estimates corresponding to the statements in Lemma \ref{lem:local} and Remark \ref{rem:local}. Following the proof of Lemma \ref{lem:local} in Appendix \ref{subapp:local_sup}, with the ansatz $m_i(t, z) = m_{sc}(z) + s_i (t, z)$, we find that
\beq \begin{split} \label{eq:m_i_ansatz1_D}
	&(z+m_{sc}) s_i + \frac{m_{sc} +s_i}{N} \sum_j s_j + (m_{sc} + s_i) \frac{C_1^V t}{N^{1/4}} \sum_j (m_{sc} + s_j) x_i x_j \\
	&\qquad + (m_{sc} + s_i) C_2^V t \sqrt{N} \sum_j (m_{sc} + s_j) (x_i^2 x_j^2 - \frac{1}{N^2}) \\
	&= (z+m_{sc}) s_i + \frac{m_{sc} +s_i}{N} \sum_j s_j + (m_{sc} + s_i) \frac{C_1^V t}{N^{1/4}} \sum_j (m_{sc} + s_j) x_i x_j \\
	&\qquad + (m_{sc} + s_i) C_2^V t \sqrt{N} m_{sc} (x_i^2 - \frac{1}{N}) + (m_{sc} + s_i) C_2^V t \sqrt{N} \sum_j s_j (x_i^2 x_j^2 - \frac{1}{N^2})= \caO(N^{-1}).
\end{split} \eeq
which corresponds to the estimate in \eqref{eq:m_i_ansatz1}. Summing the left side of \eqref{eq:m_i_ansatz1_D} over the index $i$ and dividing it by $N$, we find that 
\beq \begin{split} \label{eq:m_i_ansatz2_D}
	&(z+2m_{sc}) \left( \frac{1}{N} \sum_i s_i \right) + \left( \frac{1}{N} \sum_i s_i \right)^2 + \frac{C_1^V t}{N^{5/4}} \left( \sum_i (m_{sc} + s_i) x_i \right)^2 \\
	&\qquad + \frac{2m_{sc} C_2^V t}{\sqrt{N}} \left( \sum_i s_i (x_i^2 -\frac{1}{N}) \right) + \frac{C_2^V t}{\sqrt{N}} \sum_{i, j} s_i s_j (x_i^2 x_j^2 - \frac{1}{N^2}) = \caO(N^{-1}),
\end{split} \eeq
which gives $\frac{1}{N} \sum_i s_i = \caO(N^{-1/4})$ by naive power counting. Plugging it back into \eqref{eq:m_i_ansatz1_D}, we also find that $s_i = \caO(N^{-1/4})$ as well. (Note that these estimates are weaker than the ones we had in Appendix \ref{subapp:local_sup}.) By following the bootstrapping argument in Appendix \ref{subapp:local_sup}, we can improve the estimates to $\frac{1}{N} \sum_i s_i = \caO(N^{-3/4})$ and $s_i = \caO(N^{-1/2})$. Finally, by bootstrapping again, we find $\frac{1}{N} \sum_i s_i = \caO(N^{-1})$ and $s_i = \caO(N^{-3/4})$. Now, combining the estimate $s_i = \caO(N^{-3/4})$ with \eqref{eq:local_R_i}, we can conclude that the entrywise local law \eqref{eq:local_R} and the anisotropic local law \eqref{eq:an_local_R} hold for the current case.

With the entrywise local law and the anisotropic local law for $R$, the resolvent of $V$, we can also prove the local law for $G$, the resolvent of $H$, as in the proof of Proposition \ref{prop:local} in Appendix \ref{subapp:local_sup}. Here, we have
\[
H(t)_{ij} = V(t)_{ij} + \left( 1+C_2^V t N^{-1/2} \right)^{-1/2} \left( \frac{\lambda_0 N}{2} \E[f_{ij}''] x_i^2 x_j^2 + \frac{\lambda_0^{3/2} N^{7/4}}{6} \E[f_{ij}'''] x_i^3 x_j^3 + \frac{\lambda_0^2 N^{5/2}}{24} \E[f^{(4)}_{ij}] x_i^4 x_j^4 \right),
\]
and in particular, $H(0)$ is a rank-$3$ spiked Wigner matrix and
\[ 
H = \left( 1+C_2^V N^{-1/2} \right)^{1/2} H(1).
\]

\subsection{Green Function Comparison}
With the local laws for $R$, we can prove a statement analogous to Propositions \ref{prop:tr} and \ref{prop:Green}. Indeed, the proof of Proposition \ref{prop:tr} does not change in the current case. To prove Proposition \ref{prop:Green}, we can proceed as in Appendix \ref{subapp:local_sup} and find that the most part of the proof remain unchanged, except that the error bound $O(N^{-\frac{1}{2}+C\epsilon})$ changes to $O(N^{-\frac{1}{4}+C\epsilon})$. The only nontrivial term that should be considered separately is
\[
\frac{1}{N} \int_{E_1}^{E_2} \sum_{i, j, k} C_2^V N (x_j^2 x_k^2 -\frac{1}{N^2}) \E \left[ F'(X) G_{ij} G_{kk} G_{ji} \right] \dd x,
\]
which is $O(N^{C\epsilon})$ from the naive power counting. To obtain a better estimate for this term, we adapt the idea in \eqref{eq:last_term} to find
\[ \begin{split}
	&\frac{1}{N} \int_{E_1}^{E_2} \sum_{i, j, k} C_2^V N (x_j^2 x_k^2 -\frac{1}{N^2}) \E \left[ F'(X) G_{ij} G_{kk} G_{ji} \right] \dd x \\
	&= \frac{1}{N} \int_{E_1}^{E_2} \sum_{i, j, k} C_2^V N (x_j^2 x_k^2 -\frac{1}{N^2}) \E \left[ F'(X) G_{ij} m_{sc} G_{ji} \right] \dd x \\
	&\qquad +\frac{1}{N} \int_{E_1}^{E_2} \sum_{i, j, k} C_2^V N (x_j^2 x_k^2 -\frac{1}{N^2}) \E \left[ F'(X) G_{ij} (G_{kk} -m_{sc}) G_{ji} \right] \dd x.
\end{split} \]
The second term in the right side of the equation above is $O(N^{-\frac{1}{2}+C\epsilon})$ by the power counting, where we use the local law for $G_{kk}-m_{sc}$. The first term in the right side is further decomposed into
\[ \begin{split}
	&\frac{1}{N} \int_{E_1}^{E_2} \sum_{i, j, k} C_2^V N (x_j^2 x_k^2 -\frac{1}{N^2}) \E \left[ F'(X) G_{ij} m_{sc} G_{ji} \right] \dd x \\
	&= \frac{1}{N} \int_{E_1}^{E_2} \sum_{i, j} C_2^V N (x_j^2 -\frac{1}{N}) \E \left[ F'(X) G_{ij} m_{sc} G_{ji} \right] \dd x \\
	&= \frac{1}{N} \int_{E_1}^{E_2} \sum_{j} C_2^V N (x_j^2 -\frac{1}{N}) \E \left[ F'(X) (G^2)_{jj} m_{sc} \right] \dd x \\
	&= \frac{1}{N} \int_{E_1}^{E_2} \sum_{j} C_2^V N (x_j^2 -\frac{1}{N}) \E \left[ F'(X) ((G^2)_{jj} - m_{sc}') m_{sc} \right] \dd x \\
	&\qquad +\frac{1}{N} \int_{E_1}^{E_2} \sum_{j} C_2^V N (x_j^2 -\frac{1}{N}) \E \left[ F'(X) m_{sc}' m_{sc} \right] \dd x.
\end{split} \]
Note that the second term in the right side of the equation above vanishes due to the summation of $(x_j^2 -N^{-1})$ over the index $j$. To estimate the first term in the right side, we notice that $G^2(z) = G'(z)$. Thus, by applying the Cauchy integral formula for the derivative of $G$ and $m_{sc}$ along the square contour centered at $z$ with the sidelength $\eta$, together with a lattice argument on the contour, we can prove that $(G^2)_{jj} - m_{sc}' = \caO(N^{-1/2})$. (The proof of the local law for the derivative, based on the Cauchy integral formula and the lattice argument is standard in random matrix theory; see, e.g., Appendix B of \cite{chung2022weak}.)

\subsection{Fluctuation of the Largest Eigenvalue} \label{app:fluc}
Recall that the largest spike of $H(t)$ is $(1+o(1))(\lambda_0 N /2) \E[f_{ij}''] \bsx^2 (\bsx^2)^T$. If we let $\E[x_i^4] = w_4$, the matrix norm of the largest spike of $H(t)$ converges to $(\lambda_0 w_4/2) \E[f_{12}'']$. 

Following the discussion in the previous appendices, we can conclude that the fluctuation of $\mu_1(H(1))$ coincides with that of $\mu_1(H(0))$, and thus it satisfies
\[
N^{1/2} \left( \mu_1(H(1)) - \Bigl( \sqrt{\wt\lambda_e} + \frac{1}{\sqrt{\wt\lambda_e}} \Bigr) \right) \to \caN(0, \sigma^2)
\]
with $\sigma^2 = 2(\wt\lambda_e -1)/\wt\lambda_e$, where the effective SNR $\wt \lambda_e$ is given by
\[
\wt\lambda_e = (\lambda_0^2 w_4^2 /4) \E[f''(\sqrt{N} W_{12})]^2].
\]
As the last step, we need to multiply the statement above by the factor $( 1+C_2^V N^{-1/2} )^{1/2}$, introduced in the definition of $V(t)$; see \eqref{eq:correction}. We can thus conclude that
\beq \label{eq:appD_main}
N^{1/2}\left( \mu_{1}(\wt M) -  \Bigl( \sqrt{\wt\lambda_e} + \frac{1}{\sqrt{\wt\lambda_e}} \Bigr) \right) \Rightarrow \mathcal{N}(\wt m,\sigma^{2}),
\eeq
where $\sigma^2 = 2(\wt\lambda_e -1)/\wt\lambda_e$ and the mean of the limiting distribution is given by
\[
\frac{C_2^V}{2} \Bigl( \sqrt{\wt\lambda_e} + \frac{1}{\sqrt{\wt\lambda_e}} \Bigr).
\]

The argument we used in Appendix \ref{app:SNR_scaling} also applies to the subcritical case, which shows that the largest eigenvalue converges to $2$ and its fluctuation converges to the GOE Tracy--Widom distribution after it is shifted by $C_2^V N^{-1/2}$. Note that the deterministic shift $C_2^V N^{-1/2}$ is much larger than the size of the Tracy--Widom fluctuation, which is of order $N^{-2/3}$.

Finally, we remark that the argument in Appendix \ref{app:SNR_scaling} can be naturally adapted to a more general case. If we let
\[
k_f := \inf \{ k \in \mathbb{Z}^+: \E[f^{(k)}(\sqrt{N} W_{ij})] \neq 0 \},
\]
then by following the argument in Appendix \ref{app:SNR_scaling}, it can be readily checked that the transition happens in the scaling $\lambda = \lambda_0 N^{\frac{1}{2}(1-\frac{1}{k_f})}$. (Note that we had assumed $k_f=2$.) Moreover, we can also prove that the BBP transition in the level of the fluctuation also can be proved in a similar manner, though the proof would require more involved analysis with more levels of bootstraps. We also remark that an appropriate `correction factor', which was $( 1+C_2^V N^{-1/2} )^{1/2}$ for the case $k_f = 2$, should be applied to find the deterministic shift for both the supercritical and the subcritical cases.

\end{document}